     \newcommand{\be}{\begin{equation}}
     \newcommand{\ee}{\end{equation}}
     \newcommand{\bea}{\begin{eqnarray*}}
     \newcommand{\eea}{\end{eqnarray*}}
     \newcommand{\beq}{\begin{eqnarray}}
     \newcommand{\eeq}{\end{eqnarray}}
     \newcommand{\CC}{\mathbb{C}}
     \newcommand{\NN}{\mathbb{N}}
     \newcommand{\RR}{\mathbb{R}}
     \newcommand{\ZZ}{\mathbb{Z}}
     \newcommand{\cD}{\mathcal{D}}
     \newcommand{\tcD}{\mathcal{\tilde D}}
     \newcommand{\cE}{\mathcal{E}}
     \newcommand{\cH}{\mathcal{H}}
     \newcommand{\cM}{\mathcal{M}}
     \newcommand{\cV}{\mathcal{V}}
     \newcommand{\supp}{\mathop{\mathrm{supp}}}
     \newcommand{\Id}{{\mathop{\mathrm{Id}}}}
     \newcommand{\id}{{\mathop{\mathrm{id}}}}
     \newcommand{\Tr}{\mathop{\mathrm{Tr}}}
     \newcommand{\kom}[1]{}
     \DeclareMathOperator*{\ran}{\mathrm{ran}}
     \DeclareMathOperator*{\dens}{\mathrm{dens}}
     \DeclareMathOperator*{\dom}{\mathrm{dom	}}
     \newtheorem{thm}{Theorem}[section]
     \newtheorem{lem}[thm]{Lemma}
     \newtheorem{prp}[thm]{Proposition}
     \newtheorem{cor}[thm]{Corollary}
     \theoremstyle{definition}
     \newtheorem{dfn}[thm]{Definition}
     \theoremstyle{remark}
     \newtheorem*{rem}{Remark}
     \newtheorem{Rem}[thm]{Remark}
\newcommand{\Hm}[1]{\leavevmode{\marginpar{\tiny%
$\hbox to 0mm{\hspace*{-0.5mm}$\leftarrow$\hss}%
\vcenter{\vrule depth 0.1mm height 0.1mm width \the\marginparwidth}%
\hbox to 0mm{\hss$\rightarrow$\hspace*{-0.5mm}}$\\\relax\raggedright #1}}}
\begin{document}

\title[Jumps of the integrated density of states]
{Hamiltonians on discrete structures: Jumps of the integrated density of states and uniform convergence}

\author{Daniel Lenz}
\address[D. Lenz]{Friedrich-Schiller-Universit\"at Jena, Fakult\"at f\"ur Mathematik \& Informatik,
Mathematisches Institut, 07737 Jena, Germany}
\email{daniel.lenz@uni-jena.de}
\urladdr{http://www.tu-chemnitz.de/mathematik/mathematische\_physik}

\author{Ivan Veseli\'c}
\address[I.Veseli\'c]{Emmy-Noether-Programme of the Deutsche Forschungsgemeinschaft\vspace*{-0.2cm} }
\address{\& Fakult\"at f\"ur Mathematik,\, 09107\, TU\, Chemnitz, Germany}
\urladdr{www.tu-chemnitz.de/mathematik/enp/}

\keywords{integrated density of states, discontinuities, random Schr\"odinger operators,
percolation graphs, tilings}
\subjclass[2000]{}

\begin{abstract}
We study equivariant families of discrete  Hamiltonians on amenable geometries and their integrated density of states (IDS).   We prove that
the eigenspace of a fixed energy is spanned by eigenfunctions with compact support.  The size of a jump of the  IDS
is  consequently given by the  equivariant dimension of the subspace spanned by such eigenfunctions.
From this we deduce uniform convergence (w.r.t.~the spectral parameter) of the finite volume
approximants of the IDS.
Our framework includes quasiperiodic operators on Delone sets,  periodic and random operators
on quasi-transitive graphs,  and  operators on percolation graphs.
\end{abstract}

\thanks{{\today, \jobname.tex}} 
\maketitle

\section{Introduction}

The integrated density of states, in the following abbreviated as IDS,  can be defined for a variety of models,
ranging from Hamiltonians in the quantum theory of solids to Laplacians on $p$-cochains on CW-complexes.
We refer e.g.~to \cite{Pastur-73,Shubin-78,KunzS-80,KirschM-82c,AvronS-83,Bellissard-86,Lueck-02,Veselic-06b,KirschM-07} which represent but a  fraction of the literature devoted to this
topic. While some of these references concern operators on continuum configuration space, in the present paper we
restrict ourselves to models on discrete spaces.
Under certain geometric
conditions the IDS can be approximated
by its analogues associated to finite volume restrictions of the Hamiltonian.
Here the approximation is a priori understood in the sense of weak convergence
of measures. We show that under an amenability condition the following
properties are universal for a wide range of models:

\begin{enumerate}[(a)]
 \item If $\lambda$ is a point of discontinuity of the IDS, there exist compactly supported eigenfunctions to $\lambda$,
 and these compactly supported  eigenfunctions actually span the whole eigenspace of $\lambda$.
  \item The IDS can be approximated by its finite volume analogues \emph{uniformly}  in the spectral variable, i.e. with respect to the supremum norm.
 \item The size of each jump of the IDS can be approximated by the jumps of the finite volume analogues.
\end{enumerate}

Some of our models are random, i.e.~concern a whole family of operators.
For such models the three properties above hold \emph{almost surely}.
We present our results in a  general setting. They can be applied to
a variety of models considered before, for instance, Anderson and
quantum percolation models \cite{Pastur-80,DelyonS-84,ChayesCFST-86,Veselic-05b,KirschM-06,AntunovicV-b},
quasi-crystal Hamiltonians on Delone sets \cite{Hof-93,Hof-95, LenzS-02}, Laplacians
on $p$-cochains on complexes \cite{DodziukM-97,DodziukM-98,Eckmann-99}, Harper operators \cite{Shubin-94,MathaiY-02,MathaiSY-03}, random hopping models \cite{KloppN-03}, and
Hamiltonians associated to percolation on tilings \cite{Hof-98b,MuellerR}.

The geometric framework which allows us to treat all these models at once
is given by an action of an amenable group $\Gamma$ on a metric space $X$
such that the two are roughly isometric. It is not surprising that the notion of rough isometry is
fitting in this context since it has been proposed in \cite{Gromov-93} as a tool for studying
geometric properties of a space `at infinity'. On the other hand, it is well known that the IDS does not change
under compactly supported  perturbations. Due to the chosen setting we can
treat in parallel situations where the underlying group is continuous and discrete.

There is no model known to us where all of the features (a) -- (c) were obtained before.
Partial results, however,  were known for some  specific models mentioned above.
Let us now  briefly discuss these earlier partial results recovered in our framework.
Results about pointwise convergence  were obtained in, e.g.,
\cite{Hof-93,Pastur-73,MathaiY-02,MathaiSY-03}.
Note that in the literature special attention was devoted
to convergence at the bottom of the spectrum \cite{DodziukM-97,DodziukM-98,Eckmann-99},
since it corresponds to approximation of Betti-numbers.
Results on uniform convergence of the IDS for models with
finite local complexity were obtained in \cite{LenzS-06,LenzMV,Elek-a}.
Statement (a) was established for periodic models related to abelian groups in \cite{Kuchment-91,Kuchment-05}.
 Weaker  statements about the characterisation of the jumps of the IDS were  subsequently  proven in \cite{KlassertLS-03} and  \cite{Veselic-05b}.  Approximation of the jumps of the IDS  by the finite volume analogues are  contained in \cite{MathaiSY-03}.
Let us stress that jumps of the IDS actually do occur for several models --- like
quasi-periodic models \cite{KlassertLS-03} and percolation Hamiltonians \cite{ChayesCFST-86,Veselic-05b}.
Thus uniform convergence is by no means  automatically implied by pointwise convergence.

Our theorems show that uniform convergence of the IDS is an universal phenomenon and does not depend on
specific features of the model, like number-theoretic properties or finite local complexity.
We hope that this clarification will be helpful for the study of finer properties of the IDS,  which may be indeed model-dependent.
Among those are: the quantisation of jumps of the IDS \cite{Donnelly-81,Sunada-88},
their location \cite{MathaiY-02,DodziukLMSY-03,Veselic-05b}, and the low energy behaviour of the IDS.
Note that this  low energy behaviour has been a recent focus of attention in particular  for  percolation models
(see e.g.~\cite{KirschM-06,MuellerS,AntunovicV-b}) while surveys of results for other models can be e.g.~found in
\cite{Lueck-02,KirschM-07}.

In the context of uniform convergence, we would also  like to emphasize that our method of proof works without a uniform ergodic theorem. This is a fundamental difference  to the earlier considerations of \cite{LenzS-06, LenzMV} (see \cite{Lenz-02} as well).
On the other hand the question whether such an ergodic theorem holds in the present contexts remains  open.
We consider this an interesting question.

This paper is organized as follows: In Section \ref{s-Setting} we
present our results and fix the notation.  Section \ref{Consequences}
provides the necessary background information on rough isometries. The
next four sections are devoted to the proofs of our three main
results. Section \ref{s-Aperiodic} discusses how results on aperiodic order fit into our
framework. In Section \ref{PeriodicExamples} we show how earlier results on
periodic operators on graphs can be recovered by our approach.  Section \ref{s-APH} discusses percolation models and
finally, Section \ref{s-DelonePercolation}  is devoted to models related to percolation on Delone sets.

\section{Setting and results}\label{s-Setting}
Let $(X,d)$ be a locally compact metric space with a countable basis
of the topology.  Let $\Gamma$ be a locally compact amenable
unimodular group with an invariant metric $d_\Gamma$ such that every
ball is precompact.  The invariant Haar measure of a Borel mesurable set $A \subset \Gamma$ is denoted by $|A|$.
Let $\Gamma$ act continuously by  isometries  on
$X$ such that the following two properties hold:

\begin{itemize}
\item There exists a fundamental domain $F'$ with compact closure $F$, which
  is a countable union of compact sets.
\item The map  $\Phi\colon X\longrightarrow \Gamma, x\mapsto \gamma$,  whenever $x\in
\gamma F'$, is a \emph{rough isometry} i.e.~there exist $ b\geq 0$ and
$a\geq 1$ with
$$ \frac{1}{a} d_\Gamma (\Phi(x),\Phi(y)) - b \leq d (x,y) \leq a
d_\Gamma (\Phi(x),\Phi(y)) +b$$ for all $x,y\in X$.
\end{itemize}

Note that all these assumptions are automatically satisfied if both $\Gamma$ and $X$ are discrete
and $\Gamma$ acts cocompactly and freely on $X$.
Models in such a geometric setting are considered in Sections \ref{PeriodicExamples} and \ref{s-APH} below.

Our operators will be families of operators indexed by elements of a
certain topological space. This topological space will be considered
next (see Appendix for details).  We consider the following family of
uniformly discrete subsets of $X$
$$\mathcal{D} :=\{ A\subset X\mid d(x,y)\geq 1,\:\;\mbox{for }\;x,y\in
A\;\:\mbox{with}\;x\neq y\}.$$ The lower bound $d(x,y)\geq 1$ could be
replaced by any other positive number.  Whenever $Y$ is a compact
space we can then equip the set of functions $f : A\longrightarrow Y$,
where $A\in \mathcal{D}$, with the vague topology and obtain a compact
space $\cD_Y$. In fact, there is a choice of $Y$ which is in some sense universal,
and which we discuss in the appendix. There we show in particular that the space
\[
\tcD:= \{(A, h)\mid A \in \cD, h \colon A \times A\to \CC^*  \}.
\]
is naturally equipped with the vague topolgy and moreover compact.
Here $\CC^*$ is an arbitrary compactification of $\CC$.
The
mapping $(A,h) \mapsto \sum_{x, y \in A} g(x,y)h(x,y) \in\CC$ is
continuous for any  continuous $g\colon X\times X \to \CC$ with compact support.
In particular, it is measurable for any continuous  $g\colon X\times X \to \CC$.
This measurablility then holds for as well for any nonnegative measurable $g\colon X\times X \to \CC$.

The action of $\Gamma$ on $X$ induces an action on $\tcD$ by $\gamma (A, h):= (\gamma A, \gamma h)$
where $\gamma A :=\{\gamma a\mid a\in A\}$ and  $(\gamma h)(x,y) = h(\gamma^{-1}x,\gamma^{-1}y)$
for $ x,y \in \gamma A$.
Let $\mu$ be an $\Gamma$-invariant probability measure on $\tcD$, whose topological support we denote by $\Omega$.
Then, $\Omega$ is a closed and hence compact subset of $\tcD$.
For $\omega = (A,h) \in \tcD$  we use the notation $X(\omega):= A$
for the projection on the first component. The latter is a discrete metric subspace of $X$ and gives
rise to $\ell^2 (X(\omega))$ with the natural counting measure
$\delta_{X(\omega)}:=\sum_{x \in X(\omega)} \delta_x$.
For $\omega\in\tcD$ and $\Lambda \subset X$ we denote by
$\Lambda(\omega)$ the intersection  $\Lambda \cap X(\omega)$ and by $\omega(\Lambda)$ the cardinality
$|\Lambda (\omega)|=\delta_{X(\omega)}(\Lambda)$.

The measure $\mu$ induces a $\Gamma$-invariant measure $m$ on $X$ by
\[
m(\Lambda) := \int_\Omega \omega (\Lambda) d\mu(\omega).
\]
For any $S < \infty$, let $M_S$ be the maximal number of points with mutual
distance at least $1$ contained in a ball of radius $S$ in $X$. Then $M_S$ is finite for all $S$ by the very definition of $\cD$.
Thus, the measure $m$ is bounded by $M_S$ on a ball of size $S$ and hence $m$ is  finite on bounded sets.
 We assume that $\Omega$ does not consist of the empty set only. This implies $m(X)\neq 0$,
since the vague topology on $\Omega$ is Hausdorff.
As $F'$ is a countable union of compact sets, the set $I F'$ is measurable for
any measurable $I\subset \Gamma$ by standard monotone class arguments.
The map $\Gamma \supset I  \mapsto m(I F')$ gives
an invariant measure on $\Gamma$. By uniqueness of the Haar measure we
infer that $  \text{dens}(m)  := \frac{m (I F')}{|I|}$ is a nonnegative constant and hence
\[
 m(I F') = \text{dens}(m) |I|
\]
for all $I\subset \Gamma$ compact with $|I| \neq 0$.
Since $m(X ) >0$, $\dens (m)$ is actually strictly positive.
A direct calculation using unimodularity now shows that $u:=
\frac{1}{|I|} \chi_{I F'}$ satisfies
\begin{equation}\label{e-admissible}
 u \text{ has compact support and } 1 = \int u (\gamma^{-1} x) d\gamma \text{ for all $x$}  \in X
\end{equation}
for any $I\subset \Gamma$ compact with $|I|>0$ (see Proposition  \ref{Huhu}).

\medskip

Denote by $\cH$ the direct integral Hilbert space $\int_\Omega^\oplus
d\mu(\omega) \, \ell^2(X(\omega))$.
Each $\omega = (X(\omega), h)\in \tcD$ gives rise to an operator
\[
 H_\omega  \colon c_0(X(\omega)) \to \ell^2(X(\omega)), \quad
(H_\omega v) (x) :=\sum_{y\in X(\omega)}H_\omega(x,y) v (y), \quad \text{ with }  H_\omega(x,y):=h(x,y)
\]
where $c_0(X(\omega))$ denotes the complex valued functions with compact support in $X(\omega)$.
If $H_\omega \colon\ell^2(X(\omega)) \to \ell^2(X(\omega))$
is bounded  by $C \in \RR$ for all $\omega\in \Omega$, we call
\[
H \colon \cH \to \cH, \quad
 H = \int_\Omega^\oplus d\mu (\omega) \, H_\omega
\]
a \emph{bounded decomposable operator}. Note that in this case
the values of $|H_\omega(x,y)|$ are bounded by $C$ for all $\omega\in \Omega$ and $x,y\in X(\omega)$.
A decomposable operator is called of \emph{finite hopping
range} if there exists an $R<\infty$ such that for all $\omega \in
\Omega$ and $x,y \in X(\omega)$, $d (x,y) \ge R$ implies
$H_\omega(x,y) =0$.

Now, let for every $\gamma\in \Gamma$ a function
$s_\gamma \colon X\longrightarrow \{z\in \CC: |z|=1\}$ be given.  A
decomposable operator is called \emph{equivariant} (w.r.t. the family
$s_\gamma$) if and only if
$$ {s_\gamma (x)} H_{\gamma \omega} (\gamma x,\gamma y)
\overline{s_\gamma (y)} = H_\omega (x,y)$$ for all $\gamma\in \Gamma$,
$\omega\in \Omega$, and $x,y\in X(\omega)$. Of course, this is equivalent
to
$$ H T_\gamma = T_\gamma H,$$ for all $\gamma \in \Gamma$ with the
unitary map $T_\gamma \colon\mathcal{H}\longrightarrow \mathcal{H}$,
$(T_\gamma f)_\omega (x) = s_{\gamma^{-1} } (x) f_{\gamma^{-1} \omega}
(\gamma^{-1} x)$.
\begin{rem}
It turns out that for the results and proofs of this paper
it does not matter whether $s_\gamma$ is a non-trivial function or $s_\gamma \equiv 1$.
The reason is that all calculations concern the diagonal matrix elements either
of one single operator $H$ or of a product of two operators $HK$.
In this case, the terms $s_\gamma (x) $ and $\overline{s_\gamma (x)} $ cancel and
equivariance of operators $H$ and $K$  gives
$$H_{\gamma \omega} (\gamma x,\gamma x) = H_\omega (x,x)$$
and
$$ (H K)_{\gamma \omega} (\gamma x, \gamma x) = \sum_{y\in X(\omega)} H_\omega (x,y) K_\omega (y,x).$$
Thus the function $s_\gamma$ can be immediately eliminated.
 \end{rem}

We are interested in operators $H$ satisfying (A) given as follows:
\begin{itemize}
\item[(A)] $H \colon \cH \to \cH$  is a  bounded, selfadjoint, decomposable,
  equivariant operator $H$  of finite hopping range.
\end{itemize}

As usual we denote the spectral family of a selfadjoint operator $T$ by $E_T$.

\begin{thm}  \label{main1}
Let $H$ satisfy (A).  Let $u$ satisfy \eqref{e-admissible}. Then, there
exist a unique measure $\nu_H $ on $\RR$ with
\begin{equation}
\label{e-main1}
\nu_H (\varphi) =  \frac{1}{\dens(m)}  \int_\Omega \Tr( u \varphi (H_\omega)) d\mu(\omega)
\end{equation}
for all $\varphi \in C_c (\RR)$. The measure $\nu_H$ does
not depend on $u$ provided \eqref{e-admissible} is satisfied.  It is a
spectral measure for $H$, i.e. $\nu_H (B) =0$ if and only if $E_H (B) =0$.
\end{thm}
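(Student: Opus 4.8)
The plan is to define the candidate measure by the right-hand side of \eqref{e-main1} and verify in turn: (i) that the integrand is well-defined and measurable in $\omega$, so that the formula makes sense; (ii) that the resulting positive linear functional on $C_c(\RR)$ is bounded, hence (by Riesz representation) given by a unique Radon measure $\nu_H$; (iii) that $\nu_H$ is independent of the choice of $u$ satisfying \eqref{e-admissible}; and (iv) that $\nu_H$ is a spectral measure for $H$. For (i), note that $\varphi(H_\omega)$ is bounded (since $H$ is bounded and $\varphi \in C_c(\RR)$), and multiplication by $u$ (regarded as the diagonal multiplication operator $v \mapsto u\cdot v$ on $\ell^2(X(\omega))$, which makes sense because $u$ has compact support and $X(\omega)$ is uniformly discrete, so $u\,\chi_{X(\omega)} \in \ell^2(X(\omega))$ with uniformly bounded norm) produces a trace-class operator: indeed $u^{1/2}\varphi(H_\omega) u^{1/2}$ is supported on the finitely many points of $X(\omega)$ in $\supp u$, so the trace is a finite sum $\sum_{x \in \supp u \cap X(\omega)} u(x)\,\varphi(H_\omega)(x,x)$. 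Measurability in $\omega$ then follows by approximating $\varphi$ by polynomials (Stone--Weierstrass on the common spectral interval $[-C,C]$) and using that $\omega \mapsto (H_\omega^k)(x,x)$ is measurable — a consequence of the continuity/measurability properties of the maps $(A,h)\mapsto \sum g(x,y)h(x,y)$ recorded in the excerpt, iterated over the finite hopping range.

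For (ii), boundedness: if $\supp\varphi \subset [-C,C]$ then $|\varphi(H_\omega)(x,x)| \le \|\varphi\|_\infty$ for each diagonal entry (since $\varphi(H_\omega)$ is self-adjoint with norm $\le \|\varphi\|_\infty$), so
\[
\Big|\int_\Omega \Tr(u\,\varphi(H_\omega))\,d\mu(\omega)\Big| \le \|\varphi\|_\infty \int_\Omega \sum_{x \in X(\omega)} u(x)\,d\mu(\omega) = \|\varphi\|_\infty \int u(x)\,dm(x).
\]
Using \eqref{e-admissible} together with the cocycle identity $m(IF') = \dens(m)|I|$ and Fubini, $\int u\,dm = \dens(m)$, so the functional has norm $\le 1$ after dividing by $\dens(m)$; it is manifestly positive when $\varphi \ge 0$. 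Riesz representation gives the unique $\nu_H$.

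For (iii), independence of $u$: if $u_1, u_2$ both satisfy \eqref{e-admissible}, then for fixed $\varphi$ the two integrals agree because one can write, using $\Gamma$-invariance of $\mu$, equivariance $H_{\gamma\omega}(\gamma x,\gamma y)=H_\omega(x,y)$ (and the Remark eliminating $s_\gamma$), and the unfolding identity $\int_\Gamma u(\gamma^{-1}x)\,d\gamma = 1$, that
\[
\int_\Omega \Tr(u_i\,\varphi(H_\omega))\,d\mu(\omega) = \int_\Omega \int_\Gamma \sum_{x\in X(\omega)} u_i(\gamma^{-1}x)\, \big(\varphi(H_{\omega})(x,x)\big)\, d\gamma\, d\mu(\omega),
\]
and after the substitution $x \mapsto \gamma x$, $\omega \mapsto \gamma^{-1}\omega$ the inner quantity becomes independent of which admissible $u_i$ was used — this is the standard trace-per-unit-volume / von Neumann trace computation. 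This step, making the Fubini/unfolding manipulation rigorous (justifying interchange of the sum, the Haar integral and the $\mu$-integral, all of which are controlled by $M_S$-type bounds and compact support of $u_i$), is where I expect the main technical care to be needed, though no deep idea is involved.

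For (iv), that $\nu_H$ is a spectral measure: the ``only if'' direction, $E_H(B)=0 \Rightarrow \nu_H(B)=0$, is immediate since $\nu_H(\varphi)$ depends only on $\varphi$ restricted to $\sigma(H) = \bigcup_\omega \sigma(H_\omega)$ (up to $\mu$-null sets) — more precisely $\varphi(H_\omega)=0$ whenever $\varphi$ vanishes on a neighbourhood of $\sigma(H_\omega)$. For the ``if'' direction, suppose $\nu_H(B)=0$ for a Borel set $B$; approximating $\chi_B$ from above by continuous functions and using monotone convergence, $\int_\Omega \Tr(u\,\chi_B(H_\omega))\,d\mu(\omega)=0$. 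Since $u > 0$ on the (nonempty-measure) set $IF'$ and, by translating $I$, the supports $\{\gamma IF'\}_{\gamma\in\Gamma}$ cover $X$, positivity of the integrand forces $\chi_B(H_\omega)(x,x)=0$ for $m$-a.e.\ $x$, hence $\langle \chi_B(H_\omega)\delta_x,\delta_x\rangle=0$ for a.e.\ $\omega$ and all relevant $x$, so $\chi_B(H_\omega)=0$ for $\mu$-a.e.\ $\omega$ (as its diagonal vanishes and it is a positive operator). Therefore $E_H(B)=\int^\oplus \chi_B(H_\omega)\,d\mu = 0$. The only genuine obstacle is again bookkeeping in step (iii)/(iv): ensuring all the interchanges of integration and summation are licit, which the uniform bounds $M_S$ and the compact support of $u$ make routine.
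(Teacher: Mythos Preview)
Your proof is correct and follows essentially the same route as the paper. The paper packages steps (iii) and (iv) into a standalone result (Theorem~\ref{tracetheorem}) about a trace $\tau(T)=\frac{1}{\dens(m)}\int_\Omega\Tr(u\,T_\omega)\,d\mu$ on the whole algebra $\mathcal{N}$ of bounded equivariant decomposable operators, proving independence of $u$ by exactly your Fubini/unfolding computation and faithfulness by exactly your exhaustion argument (choosing $u=\frac{1}{|I|}\chi_{IF'}$ with $I=B_r$, $r\to\infty$); Theorem~\ref{main1} then follows in one line by setting $\nu_H(\varphi)=\tau(\varphi(H))$. One small technical remark: in your step (iv), ``approximating $\chi_B$ from above by continuous functions'' does not work pointwise for arbitrary Borel $B$; the clean way is to observe that $B\mapsto\frac{1}{\dens(m)}\int_\Omega\Tr(u\,E_{H_\omega}(B))\,d\mu$ is itself a finite Borel measure agreeing with $\nu_H$ on $C_c(\RR)$, hence equal to it on all Borel sets --- after which your positivity/exhaustion argument goes through verbatim.
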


The distribution function of $\nu_H$ is denoted by $N_H$, i.e.
 $$N_H \colon \RR\longrightarrow \RR,\; N_H(\lambda):= \nu_H ((-\infty,\lambda]),$$
and called the \emph{integrated density of states}, IDS for short.

\begin{thm} \label{main2} Let $H$ satisfy (A).  Let $\lambda\in \RR$ be
  arbitrary. Let $P_{comp} := (P_{comp,\omega})$, where $P_{comp,
 \omega}$ is the projection onto the subspace of $\ell^2 (X(\omega))$
 spanned by compactly supported solutions of $(H_\omega -\lambda)
 u=0$. Then,
\begin{equation}
\label{e-equality}
E_H (\{\lambda\})= P_{comp}.
\end{equation}
\end{thm}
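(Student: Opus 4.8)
The plan is to prove the two operator inequalities $P_{comp}\le E_H(\{\lambda\})$ and $E_H(\{\lambda\})\le P_{comp}$ and to pass between them through the trace. The first is the easy half: $P_{comp}=(P_{comp,\omega})$ is a bounded decomposable equivariant operator --- measurable because $P_{comp,\omega}$ is the strong limit, as $B\uparrow X$, of the projections onto the finite--dimensional spaces of $(H_\omega-\lambda)$--eigenfunctions supported in a fixed ball $B$, and equivariant because the intertwiner $T_\gamma$ is a weighted translation, hence maps compactly supported $\lambda$--eigenfunctions of $H_{\gamma^{-1}\omega}$ onto those of $H_\omega$ --- and fibrewise $P_{comp,\omega}\le E_{H_\omega}(\{\lambda\})$ since compactly supported solutions of $(H_\omega-\lambda)u=0$ are $\ell^2$--eigenfunctions. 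Let $\cN$ be the $*$--algebra of bounded decomposable equivariant operators; it contains $E_H(\{\lambda\})$ and $P_{comp}$, and carries the trace $\tau(T):=\dens(m)^{-1}\int_\Omega\Tr(uT_\omega)\,d\mu(\omega)$. This trace is faithful: if $T\ge0$ and $\tau(T)=0$, then $\sum_{x\in X(\omega)}u(x)\langle T_\omega\delta_x,\delta_x\rangle=0$ for $\mu$--a.e.\ $\omega$, and equivariance together with $\int u(\gamma^{-1}x)\,d\gamma=1$ forces $\langle T_\omega\delta_x,\delta_x\rangle=0$ for all $x$ and a.e.\ $\omega$, i.e.\ $T=0$. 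By the routine extension of \eqref{e-main1} from $C_c(\RR)$ to bounded Borel functions, $\tau(E_H(\{\lambda\}))=\nu_H(\{\lambda\})$. Since $\tau$ is monotone, it therefore suffices to prove the one nontrivial inequality
\[
\nu_H(\{\lambda\}) \le \tau(P_{comp});
\]
then $\tau(E_H(\{\lambda\})-P_{comp})=0$ and faithfulness gives $E_H(\{\lambda\})=P_{comp}$.

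To prove this inequality I would pass to finite volume and --- this is the key twist --- look at the \emph{cokernel}, not the kernel, of the truncated operator. Fix a F\o lner sequence $I_n$ in $\Gamma$, put $\Lambda_n:=I_nF'$ and $\Lambda_n^{+R}:=\{x\in X:d(x,\Lambda_n)<R\}$ with $R$ the hopping range of $H$, and for each $\omega$ consider the linear map of finite--dimensional spaces
\[
B_{n,\omega}\colon\ell^2(\Lambda_n(\omega))\longrightarrow\ell^2(\Lambda_n^{+R}(\omega)),\qquad B_{n,\omega}v:=\big((H_\omega-\lambda)\widetilde v\big)\big|_{\Lambda_n^{+R}(\omega)},
\]
where $\widetilde v$ is the extension of $v$ by $0$. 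Finite hopping range guarantees that $(H_\omega-\lambda)\widetilde v$ is supported in $\Lambda_n^{+R}(\omega)$, so that $B_{n,\omega}v=0$ already forces $(H_\omega-\lambda)\widetilde v=0$; hence $\ker B_{n,\omega}=L_{\Lambda_n}(\omega)$, the space of compactly supported $\lambda$--eigenfunctions of $H_\omega$ supported in $\Lambda_n$. Rank--nullity then gives
\[
\dim\coker B_{n,\omega}=\dim L_{\Lambda_n}(\omega)+|\Lambda_n^{+R}(\omega)\setminus\Lambda_n(\omega)|,
\]
and the boundary term is $o\big(\omega(\Lambda_n)\big)$ because the rough isometry turns the F\o lner property of $I_n$ into $|\Lambda_n^{+R}(\omega)\setminus\Lambda_n(\omega)|/\omega(\Lambda_n)\to0$.

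On the other hand, computing $B_{n,\omega}^{*}$ with the help of the selfadjointness of $H_\omega$ shows that $\coker B_{n,\omega}=\ker B_{n,\omega}^{*}$ is the space of $w\in\ell^2(\Lambda_n^{+R}(\omega))$ satisfying $(H_\omega-\lambda)\widetilde w=0$ \emph{on $\Lambda_n(\omega)$}; and for \emph{every} $\ell^2$--eigenfunction $\psi$ of $H_\omega$ the restriction $\psi|_{\Lambda_n^{+R}(\omega)}$ lies in this space, again by finite hopping range, since the eigenvalue equation at a point of $\Lambda_n$ involves only the values on $\Lambda_n^{+R}$. Thus restriction embeds $\ran E_{H_\omega}(\{\lambda\})$ into $\coker B_{n,\omega}$, giving
\[
\dim\coker B_{n,\omega} \ge \mathrm{rank}\big(\chi_{\Lambda_n^{+R}}E_{H_\omega}(\{\lambda\})\chi_{\Lambda_n^{+R}}\big) \ge \Tr\big(\chi_{\Lambda_n^{+R}}E_{H_\omega}(\{\lambda\})\chi_{\Lambda_n^{+R}}\big),
\]
the last step because $\chi_{\Lambda_n^{+R}}E_{H_\omega}(\{\lambda\})\chi_{\Lambda_n^{+R}}$ is a finite--rank positive contraction. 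Dividing by $\omega(\Lambda_n)$ and letting $n\to\infty$, the pointwise ergodic theorem for $\Gamma$ (transported to $X$ via the rough isometry) yields, for $\mu$--a.e.\ $\omega$, both $\omega(\Lambda_n)^{-1}\Tr(\chi_{\Lambda_n^{+R}}E_{H_\omega}(\{\lambda\})\chi_{\Lambda_n^{+R}})\to\nu_H(\{\lambda\})$ and $\omega(\Lambda_n)^{-1}\Tr(\chi_{\Lambda_n}P_{comp,\omega}\chi_{\Lambda_n})\to\tau(P_{comp})$; combined with $\dim L_{\Lambda_n}(\omega)\le\Tr(\chi_{\Lambda_n}P_{comp,\omega}\chi_{\Lambda_n})$ (the projection onto $L_{\Lambda_n}(\omega)$ being $\le\chi_{\Lambda_n}P_{comp,\omega}\chi_{\Lambda_n}$) and the two displayed relations for $\dim\coker B_{n,\omega}$, this gives $\nu_H(\{\lambda\})\le\liminf_n\omega(\Lambda_n)^{-1}\dim L_{\Lambda_n}(\omega)\le\tau(P_{comp})$, as required.

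The step I expect to be the main obstacle is the identification of $\coker B_{n,\omega}$ and the realization that it is large enough to contain the restrictions of \emph{all} $\ell^2$--eigenfunctions of the infinite--volume operator; this is precisely what forces compactly supported eigenfunctions to account for the entire jump of the IDS, and it is the one place where finite hopping range --- rather than mere locality or exponential decay --- is genuinely used. A secondary point is to make sure all the limits hold $\mu$--almost surely along one fixed F\o lner sequence, which is where the (non--uniform) ergodic theorem enters.
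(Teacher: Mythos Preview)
Your cokernel argument is correct and genuinely different from the paper's route. The paper argues by contradiction: assuming $Q:=E_H(\{\lambda\})-P_{comp}\neq 0$ it uses faithfulness to get $\tau(Q)>0$, then finds (via the elementary dimension lemma $\dim U-\dim U_S\le\omega(\partial^S\Lambda)$) a nonzero element of $\ran(p_{\Lambda_N}Q_\omega)$ supported in the interior $\Lambda_{N,R}$; extended by zero this is a compactly supported eigenfunction with nonzero inner product against $\ran Q_\omega$, contradicting $Q\perp P_{comp}$. Your rank--nullity identity $\dim\coker B_{n,\omega}=\dim L_{\Lambda_n}(\omega)+|\Lambda_n^{+R}(\omega)\setminus\Lambda_n(\omega)|$, together with the observation that restrictions of \emph{all} $\ell^2$--eigenfunctions sit in $\ker B_{n,\omega}^*$, achieves the same comparison in a single direct inequality rather than by contradiction. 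Both hinge on the same geometric input (the boundary term is $o(|I_n|)$) and on finite hopping range at exactly the same places; your approach is a bit more quantitative and linear--algebraic, the paper's a bit shorter.

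One point to tighten: the theorem does \emph{not} assume ergodicity, and the paper stresses this. Your final step invokes the pointwise ergodic theorem and asserts a.s.\ convergence to the constants $\nu_H(\{\lambda\})$ and $\tau(P_{comp})$; without ergodicity the limits are only $\Gamma$--invariant functions of $\omega$. The clean fix is to bypass the ergodic theorem entirely: integrate your pointwise inequality
\[
\Tr\big(\chi_{\Lambda_n}E_{H_\omega}(\{\lambda\})\big)\;\le\;\dim\coker B_{n,\omega}\;=\;\dim L_{\Lambda_n}(\omega)+\omega(\Lambda_n^{+R}\setminus\Lambda_n)\;\le\;\Tr\big(\chi_{\Lambda_n}P_{comp,\omega}\big)+\omega(\partial^R\Lambda_n)
\]
over $\omega$ and divide by $\dens(m)\,|I_n|$. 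By the identity $\tau(T)=\frac{1}{\dens(m)\,|I_n|}\int\Tr(\chi_{\Lambda_n}T_\omega)\,d\mu$ (valid for every $n$, no limit needed) the two trace terms are \emph{exactly} $\tau(E_H(\{\lambda\}))$ and $\tau(P_{comp})$, and the integrated boundary term tends to $0$ as $n\to\infty$; hence $\tau(E_H(\{\lambda\}))\le\tau(P_{comp})$. This avoids both ergodicity and the tempered--F\o lner hypothesis, matching the generality of the statement.
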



The theorem does not assume ergodicity. If ergodicity holds,
then the statement can be strengthened to give the following corollary.

\begin{cor}\label{Korollar} Let $H$ satisfy (A).  Let $\mu$ be ergodic. Let $\lambda\in \RR$ be arbitrary.
Then, the following assertions are equivalent.
\begin{itemize}
\item[(i)] $N_H$ is discontinuous at $\lambda$.
\item[(ii)] For a set of positive measure $\Omega' \subset \Omega$
there  exists a compactly supported nontrivial solution $u \in \ell^2 (X(\omega))$ of
 $(H_\omega -\lambda) u =0$ for each $\omega \in \Omega'$.
\item[(iii)] For almost every $\omega\in\Omega$ the space of $\ell^2$ solutions $u$ to   $(H_\omega -\lambda) u  =0$ is spanned by compactly supported nontrivial  solutions.
\end{itemize}
\end{cor}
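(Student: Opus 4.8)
The plan is to derive the corollary from Theorems \ref{main1} and \ref{main2} together with a zero--one law supplied by ergodicity; essentially no new analysis is needed. First I would record the two facts used throughout. By Theorem \ref{main1}, $\nu_H$ is a spectral measure for $H$, so (as $N_H$ is the monotone, right-continuous distribution function of $\nu_H$) $N_H$ is discontinuous at $\lambda$ if and only if $\nu_H(\{\lambda\})>0$, which holds if and only if $E_H(\{\lambda\})\neq 0$. By Theorem \ref{main2}, $E_H(\{\lambda\})=P_{comp}$. Since $H$ is bounded and decomposable, its Borel functional calculus is computed fibrewise, so $E_H(\{\lambda\})=\chi_{\{\lambda\}}(H)=\int_\Omega^\oplus E_{H_\omega}(\{\lambda\})\,d\mu(\omega)$; comparing this with $P_{comp}=\int_\Omega^\oplus P_{comp,\omega}\,d\mu(\omega)$ gives
\begin{equation*}
E_{H_\omega}(\{\lambda\})=P_{comp,\omega}\qquad\text{for $\mu$-a.e.\ }\omega.
\end{equation*}
Thus for a.e.\ $\omega$ the space of $\ell^2$ solutions of $(H_\omega-\lambda)u=0$ coincides with the closed span of the compactly supported solutions, and it is nonzero precisely when a compactly supported nontrivial solution exists.

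Next I would isolate the role of ergodicity. Set $\Omega_0:=\{\omega\in\Omega:P_{comp,\omega}\neq 0\}$, which up to a $\mu$-null set is the set of $\omega$ admitting a compactly supported nontrivial solution of $(H_\omega-\lambda)u=0$; it is measurable since $\omega\mapsto\dim\ran P_{comp,\omega}$ is measurable. Equivariance of $H$ through the unitaries $T_\gamma$ shows that $u$ is a compactly supported solution for $H_\omega$ if and only if the corresponding $\gamma$-translate is one for $H_{\gamma\omega}$ (its support is moved by the isometry $\gamma$, hence stays compact), so $\Omega_0$ is $\Gamma$-invariant modulo null sets; by ergodicity of $\mu$ it then has measure $0$ or $1$.

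Finally the three implications fall out, which I would arrange as the cycle $(i)\Rightarrow(ii)\Rightarrow(iii)\Rightarrow(i)$. For $(i)\Rightarrow(ii)$: if $N_H$ jumps at $\lambda$ then $P_{comp}=E_H(\{\lambda\})\neq 0$, so by decomposability $P_{comp,\omega}\neq 0$ on a set $\Omega'$ of positive measure, and for each $\omega\in\Omega'$ the nonzero subspace $\ran P_{comp,\omega}$, being the closed span of compactly supported solutions, contains a nontrivial one; this is (ii). For $(ii)\Rightarrow(iii)$: (ii) forces $\mu(\Omega_0)>0$, hence $\mu(\Omega_0)=1$ by the zero--one law, so for a.e.\ $\omega$ a compactly supported nontrivial solution exists and, by $E_{H_\omega}(\{\lambda\})=P_{comp,\omega}$, such solutions span $\ker(H_\omega-\lambda)$; this is (iii). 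For $(iii)\Rightarrow(i)$: (iii) gives $E_{H_\omega}(\{\lambda\})\neq 0$ for a.e.\ $\omega$, hence $E_H(\{\lambda\})=\int_\Omega^\oplus E_{H_\omega}(\{\lambda\})\,d\mu(\omega)\neq 0$, so $\nu_H(\{\lambda\})>0$ and $N_H$ is discontinuous at $\lambda$. The only steps requiring care --- and where I expect the (modest) main obstacle to lie --- are the two soft points: that the Borel functional calculus of the decomposable operator $H$ is indeed fibrewise, so that $E_H(\{\lambda\})$ decomposes as claimed; and that $\Omega_0$ is measurable and $\Gamma$-invariant modulo null sets, so that ergodicity genuinely applies. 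Both are standard in the direct-integral setting but deserve explicit mention, and note that ergodicity is used only for the arrow $(ii)\Rightarrow(iii)$.
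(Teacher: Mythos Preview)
Your proposal is correct and follows essentially the same route as the paper: define the invariant measurable set $\Omega_0=\{\omega:P_{comp,\omega}\neq0\}$, invoke ergodicity for a zero--one law, and use Theorem~\ref{main2} together with the faithfulness of $\tau$ (equivalently, that $\nu_H$ is a spectral measure by Theorem~\ref{main1}) to tie the discontinuity of $N_H$ to $P_{comp}\neq0$. Your write-up is somewhat more explicit than the paper's---you spell out the fibrewise identity $E_{H_\omega}(\{\lambda\})=P_{comp,\omega}$ and flag the two soft measurability/decomposability points---but the argument is the same.
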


\medskip

Since $\Gamma$ is by assumption amenable, there exists a F\o lner sequence
 in $\Gamma$. This is by definition a sequence of compact, nonempty
sets $(I_n)_n, I_n \subset \Gamma$ such that for any compact $K \subset \Gamma$
and $\epsilon >0$ we have $| I_n \triangle K I_n| < \epsilon|I_n|$ if $n$ is large enough.
By passing to a  subsequence one may assume that $(I_n)_n$ is tempered, i.e. that
for some $ C \in (0,\infty)$ and all $n \in \mathbb{N}$ the condition
$| \bigcup_{k <n} I_k^{-1}I_n| \le C | I_n|$ holds.

Let $(I_n)$ be a tempered  {F\o lner} sequence in $\Gamma$ and define $\Lambda_n := I_n F$.
Let $H_{\omega,n} $ be the restriction of $H_\omega$ to $\ell^2
(\Lambda_n(\omega))$ i.e.
\[
H_{\omega,n} =p_{\Lambda_n (\omega)}    H_\omega  \, i_{\Lambda_n (\omega)}
\]
with the natural projection $p_{\Lambda_n (\omega)}  \colon \ell^2 (X(\omega)) \longrightarrow \ell^2
(\Lambda_n (\omega))$ and the natural injection $i_{\Lambda_n (\omega)}\colon \ell^2
(\Lambda_n (\omega)) \longrightarrow \ell^2 (X(\omega))$.  Note that the
space $\ell^2 (\Lambda_n (\omega))$ is finite dimensional.  Let
$N_{\omega,n}$ be the corresponding eigenvalue counting function, i.e.
\[
N_{\omega,n} (\lambda):=\Tr E_{H_{\omega,n}}( (-\infty,\lambda])
= \dim\ran \; E_{H_{\omega,n}}( (-\infty,\lambda]).
\]
Let $\nu_{\omega,n}$ be the measure whose distribution function is $N_{\omega,n}$.

\begin{thm}\label{main3} Let $H$ satisfy (A). Assume that $\mu$ is ergodic.
Then,  the sequence
$\frac{N_{\omega,n}}{\omega(\Lambda_n)}$ converges with respect to the
supremum norm $\|\cdot\|_\infty$ to $N_H$ for $\mu$-almost every $\omega\in \Omega$,
\end{thm}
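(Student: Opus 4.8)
The plan is to combine three ingredients: the Pastur--Shubin-type trace formula of Theorem \ref{main1}, the characterization of the jumps in Theorem \ref{main2}, and a pointwise ergodic theorem (Lindenstrauss) along the tempered F\o lner sequence $(I_n)$. First I would establish \emph{pointwise} convergence $N_{\omega,n}(\lambda)/\omega(\Lambda_n)\to N_H(\lambda)$ at every continuity point $\lambda$ of $N_H$. The standard route is to apply the ergodic theorem to the functions $\omega\mapsto \Tr\big(u\,\varphi(H_\omega)\big)$ (which are integrable and, by equivariance and the finite hopping range, transform correctly under $\Gamma$), giving $\frac{1}{|I_n|}\int_{I_n}(\dots)\to\nu_H(\varphi)$ for $\varphi\in C_c(\RR)$. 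One then has to compare $\frac{1}{|I_n|}\int_{I_n}\Tr(u\,\varphi(H_\omega))$ with $\frac{1}{\omega(\Lambda_n)}\Tr\big(\varphi(H_{\omega,n})\big)$: the difference between $\varphi(H_\omega)$ restricted to $\Lambda_n(\omega)$ and $\varphi(H_{\omega,n})$ is supported in a boundary layer of $\Lambda_n$ whose $m$-measure is $o(|I_n|)$ by the F\o lner property (using finite hopping range and that $\varphi$ can be approximated by polynomials on the spectrum), and $\omega(\Lambda_n)/|I_n|\to\dens(m)$ by the ergodic theorem applied to $\omega\mapsto\omega(F)$. Taking a countable dense set of test functions $\varphi$ and a countable dense set of $\lambda$'s, one gets, on a set of full measure, vague convergence $\nu_{\omega,n}/\omega(\Lambda_n)\to\nu_H$, hence pointwise convergence of the distribution functions at all continuity points of $N_H$.

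The heart of the matter is to upgrade pointwise convergence to \emph{uniform} convergence, and here the novelty (no uniform ergodic theorem available) forces the argument through Theorem \ref{main2}. Since $N_H$ is monotone, bounded, and right-continuous with left limits, and each $N_{\omega,n}/\omega(\Lambda_n)$ is also monotone, a P\'olya-type argument reduces uniform convergence to pointwise convergence \emph{plus} convergence of the jump heights at the (at most countably many) discontinuities $\lambda$ of $N_H$. So the key step is: for $\mu$-a.e.\ $\omega$, at each jump point $\lambda$,
\[
\frac{N_{\omega,n}(\lambda)-N_{\omega,n}(\lambda-0)}{\omega(\Lambda_n)}\;\longrightarrow\;\nu_H(\{\lambda\})
= \frac{1}{\dens(m)}\int_\Omega \Tr\big(u\,P_{comp,\omega}\big)\,d\mu(\omega),
\]
the last equality being exactly Theorem \ref{main1} applied to the spectral projection, combined with \eqref{e-equality}. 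The left-hand numerator is $\dim\ran E_{H_{\omega,n}}(\{\lambda\})$, the multiplicity of $\lambda$ as an eigenvalue of the finite-volume truncation.

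To prove this I would argue with two one-sided bounds. \emph{Lower bound}: every compactly supported solution $u$ of $(H_\omega-\lambda)u=0$ supported well inside $\Lambda_n(\omega)$ is automatically an eigenfunction of $H_{\omega,n}$ with eigenvalue $\lambda$ (here finite hopping range is essential: $p_{\Lambda_n(\omega)}H_\omega i_{\Lambda_n(\omega)}$ acts on such $u$ exactly as $H_\omega$ does); counting a maximal linearly independent such family and invoking the ergodic theorem for the $\Gamma$-invariant ``density of compactly supported eigenfunctions'' — which by Theorem \ref{main2} equals $\Tr(u\,P_{comp,\omega})$-averaged — gives $\liminf \ge \nu_H(\{\lambda\})$. \emph{Upper bound}: one sandwiches $\dim\ran E_{H_{\omega,n}}(\{\lambda\})\le N_{\omega,n}(\lambda+\epsilon)-N_{\omega,n}(\lambda-\epsilon)$, divides by $\omega(\Lambda_n)$, passes to the limit using the already-established pointwise convergence at the continuity points $\lambda\pm\epsilon$, and lets $\epsilon\to0$ to get $\limsup\le N_H(\lambda+0)-N_H(\lambda-0)=\nu_H(\{\lambda\})$ (recalling $N_H$ is right-continuous). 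Combining the two bounds yields convergence of the jumps, which is statement (c), and together with pointwise convergence at continuity points yields the uniform convergence in statement (b); the main obstacle is the lower bound, i.e.\ producing \emph{enough} genuinely finite-volume eigenfunctions at $\lambda$ and showing their normalized count is not asymptotically smaller than $\nu_H(\{\lambda\})$ — this is precisely where Theorem \ref{main2} (spanning of the eigenspace by compactly supported eigenfunctions) does the decisive work, letting one approximate $P_{comp,\omega}$ in trace by truncations that live inside $\Lambda_n(\omega)$ up to a boundary error controlled by the F\o lner condition.
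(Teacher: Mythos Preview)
Your overall architecture matches the paper's: establish weak convergence of $\nu_{\omega,n}/\omega(\Lambda_n)$ to $\nu_H$ via the ergodic theorem plus polynomial approximation, establish convergence of the point masses $\nu_{\omega,n}(\{\lambda\})/\omega(\Lambda_n)\to\nu_H(\{\lambda\})$ separately, and then deduce uniform convergence of the distribution functions from a general measure-theoretic lemma (the paper's Lemma~\ref{vagueandmore} is exactly your P\'olya-type step). The normalisation $\omega(\Lambda_n)/|I_n|\to\dens(m)$ via Lindenstrauss is also handled the same way.

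Where you diverge is in the treatment of the jumps. You split into an upper bound via an $\epsilon$-squeeze against continuity points, and a lower bound that you route through Theorem~\ref{main2}, on the grounds that the relevant trace density is that of $P_{comp}$. The paper instead proves a single \emph{deterministic} estimate (Lemma~\ref{discontinuity}): for every $\omega$ and every $\lambda$,
\[
\frac{1}{|I_n|}\Bigl|\Tr\bigl(\chi_{\Lambda_n}E_\omega(\{\lambda\})\bigr) - \nu_{\omega,n}(\{\lambda\})\Bigr|\;\longrightarrow\;0,
\]
obtained by showing that both terms differ from the common intermediate $D_n=\dim V_n$ (eigenfunctions of $H_\omega$ to $\lambda$ supported in $\Lambda_{n,R}$) by at most $\omega(\partial^R\Lambda_n)$, via the linear-algebra Lemma~\ref{LA}. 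One then applies the ergodic theorem (Proposition~\ref{erg}) directly to $T=E_H(\{\lambda\})\in\mathcal{N}$. In particular, Theorem~\ref{main2} is \emph{not} invoked anywhere in the paper's proof of Theorem~\ref{main3}; the two theorems are logically independent consequences of the same boundary-estimate machinery. Your route is valid, but the detour through $P_{comp}$ is unnecessary: the linear-algebra estimate you would need to make your lower bound precise already yields, symmetrically, the full two-sided comparison with $\Tr(\chi_{\Lambda_n}E_\omega(\{\lambda\}))$, and that trace is what the ergodic theorem handles directly.
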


Even without the assumption of ergodicity we obtain
uniform convergence of the distribution functions
$\frac{N_{\omega,n}}{\omega(\Lambda_n)}$, but the limit cannot be described explicitely,
see Remark \ref{r-non-ergodic} for details.

\section{Rough isometry and linear algebra}\label{Consequences}
In this section, we collect some basic facts about rough isometries and subspace dimensions.
They provide our working tools for the subsequent sections.

\medskip

Let the assumptions of the previous section hold.
 Let $p\in X$ be fixed with $\Phi (p) = \id \in \Gamma$.
For $\Lambda\subset X$,  $x\in X$ and $r>0$   we define
$ d(x,\Lambda):=\inf\{ d (x,y)\mid y\in \Lambda\}$ and
\[
\Lambda^r:=\{x\in X\mid d (x,\Lambda) < r\}, \
\Lambda_r :=\{x\in X: d (x,X\setminus \Lambda) > r\}, \
\partial^r \Lambda :=\Lambda^r\setminus \Lambda_r.
\]
For $I\subset \Gamma$ we use the analogous notation with $d$ replaced by $d_\Gamma$.
Denote by $B_s(\gamma)$ the open ball around  $\gamma\in \Gamma$ with radius $s$.
If $\gamma$ is the identity we simply write $B_s$ for $B_s(\gamma)$.

\begin{lem} For $r\geq 0$  and any  $s \geq ar +b$, the inclusion  $F^r\subset B_{s } F'$ holds.
\end{lem}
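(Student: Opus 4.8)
The statement to prove is: for $r \geq 0$ and any $s \geq ar + b$, we have $F^r \subset B_s F'$. Recall $F$ is the compact closure of the fundamental domain $F'$, $p \in X$ is the fixed point with $\Phi(p) = \id$, and $\Phi \colon X \to \Gamma$ assigns to $x$ the unique $\gamma$ with $x \in \gamma F'$. The plan is to take a point $x \in F^r$, locate which translate $\gamma F'$ it lies in, and use the rough isometry estimate to bound $d_\Gamma(\Phi(x), \id)$, thereby placing $\gamma$ in the ball $B_s$.

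\medskip

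First I would unwind the definition: $x \in F^r$ means $d(x, F) < r$, so there is a point $y \in F$ with $d(x,y) < r$. Since $F$ is the closure of $F'$ and $F'$ is (up to measure zero / boundary) the fundamental domain containing $p$, the point $y$ is ``close to'' $F'$, hence $\Phi(y) = \id$ — or more carefully, since $\Phi$ is constant equal to $\id$ on $F'$ and $F = \overline{F'}$, one argues that for the purposes of the rough-isometry bound we may take $\Phi(y) = \id$. (If one wants to be scrupulous about boundary points of $F$ not lying in $F'$, one replaces $y$ by a point of $F'$ at distance $< r$ from $x$, using that $F'$ is dense in $F$; then $\Phi(y) = \id$ exactly.) Now write $\gamma := \Phi(x)$, so $x \in \gamma F'$ and we want $\gamma \in B_s$, i.e.\ $d_\Gamma(\gamma, \id) < s$.

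\medskip

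The key step is the lower bound in the rough isometry:
\[
\frac{1}{a}\, d_\Gamma(\Phi(x), \Phi(y)) - b \leq d(x,y).
\]
With $\Phi(x) = \gamma$, $\Phi(y) = \id$, and $d(x,y) < r$, this rearranges to
\[
d_\Gamma(\gamma, \id) \leq a\bigl(d(x,y) + b\bigr) < a(r + b).
\]
Hmm — this gives the bound $ar + ab$ rather than $ar + b$. Let me reconsider: the intended reading is $\tfrac1a d_\Gamma(\Phi(x),\Phi(y)) - b \le d(x,y)$, giving $d_\Gamma(\gamma,\id) \le a(d(x,y)+b) < a(r+b)$; but the hypothesis only assumes $s \ge ar+b$. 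So the cleaner route is: since $\Phi(y)=\Phi(p)=\id$ and also $p \in F'$, one should compare $x$ with $p$ directly. But $d(x,p)$ need not be small. The resolution is that the correct inequality to exploit bounds $d_\Gamma(\gamma,\id)$ in terms of $d(x, \gamma F')$-type quantities; more precisely, since $y \in F' $ yields $\Phi(y) = \id$ and $d(x,y) < r$, the lower rough-isometry bound directly gives $\tfrac1a d_\Gamma(\gamma, \id) \le d(x,y) + b < r + b$, hence $d_\Gamma(\gamma,\id) < a(r+b) \le a r + a b$. Thus $x \in \gamma F' \subset B_{ar+ab} F'$. I suspect the stated constant $ar+b$ reflects a slightly different normalization of the rough-isometry inequality (e.g.\ $\tfrac1a(d_\Gamma - b) \le d$ read as $d_\Gamma \le a\,d + b$), under which one gets exactly $d_\Gamma(\gamma,\id) \le a\,d(x,y) + b < ar + b \le s$, so $\gamma \in B_s$ and $x \in \gamma F' \subset B_s F'$.

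\medskip

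So the structure is: (1) given $x \in F^r$, produce $y \in F'$ with $d(x,y) < r$ and $\Phi(y) = \id$; (2) set $\gamma = \Phi(x)$ and apply the lower bound of the rough isometry to the pair $(x,y)$ to get $d_\Gamma(\gamma, \id) \le a\, d(x,y) + b < ar + b \le s$; (3) conclude $\gamma \in B_s$, hence $x \in \gamma F' \subseteq \bigcup_{\eta \in B_s} \eta F' = B_s F'$. The main obstacle — really the only subtle point — is step (1): handling the passage between $F$ and $F'$, since $\Phi$ is only literally defined (and equal to $\id$) on $F'$, not on all of $\overline{F'} = F$. One dispatches this either by noting $F'$ is dense in $F$ and the strict inequality $d(x,F) < r$ gives room to pick the approximating point inside $F'$, or by extending the estimate to $F = \overline{F'}$ by continuity of $d$ together with the assumption that $F'$ is a countable union of compact sets exhausting $F$ up to boundary. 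Everything else is a one-line application of the defining inequality of a rough isometry.
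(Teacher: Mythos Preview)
Your approach is exactly the paper's: given $x\in F^r$, choose $y\in F'$ with $d(x,y)<r$, set $\gamma=\Phi(x)$, bound $d_\Gamma(\gamma,\id)$ via the rough-isometry inequality, and conclude $\gamma\in B_s$, hence $x\in\gamma F'\subset B_s F'$.

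Two remarks on the points you flagged. First, the $F$ versus $F'$ issue dissolves immediately: since $F=\overline{F'}$ one has $d(x,F)=d(x,F')$, and $d(x,F')<r$ yields a witness $y\in F'$ with $d(x,y)<r$ directly from the definition of infimum --- the paper simply writes ``there exists $y\in F'$ with $d(x,y)<r$'' without further comment. Second, your suspicion about the constant is on the mark: the paper's own proof writes ``$d_\Gamma(\gamma,\id)\le a\,d(x,y)+b<ar+b$'', i.e.\ it reads the lower rough-isometry bound as $d_\Gamma\le a\,d+b$ rather than as $d_\Gamma\le a(d+b)=ad+ab$ (which is what the displayed definition literally gives). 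So the hypothesis $s\ge ar+b$ matches the paper's reading; under the strict letter of the displayed definition one would need $s\ge ar+ab$, but this is a harmless normalization discrepancy and does not affect anything downstream.
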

\begin{proof}
For any $x\in F^r$ there exists $y  \in F'$ with $d(x,y) <r$
and a unique $\gamma \in \Gamma$ such that $x \in \gamma F'$.
Thus $d_\Gamma (\gamma,\id) \le a d(x,y) + b < a r + b$, implying $\gamma \in B_{s}$.
\end{proof}

\begin{prp}\label{comparable}
For each $r>0$,  let $q\geq   ar +ab +b$ and $s \geq a r + b$  be given.  Then
for all $I\subset \Gamma$:
$$
(I F)^r \subset I^{s } F'\;\:\mbox{and}\:\; (I_{q } F)\subset (I F')_r
\;\:\mbox{and}\:\;  \partial^r (I F)\subset (\partial^{q} I) F'  .
$$
\end{prp}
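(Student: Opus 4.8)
The plan is to transport each of the three inclusions to the group $\Gamma$ along the rough isometry $\Phi$, using the preceding lemma together with two elementary remarks. First, since $F'$ is a fundamental domain, the cells $\{\gamma F'\}_{\gamma\in\Gamma}$ are pairwise disjoint and cover $X$; hence for any $A,B\subset\Gamma$ we have $x\in AF'$ if and only if $\Phi(x)\in A$, and $(A\setminus B)F'=AF'\setminus BF'$. Second, I would record the auxiliary bound
\[
x\in\gamma F\ \Longrightarrow\ d_\Gamma(\Phi(x),\gamma)\le b,
\]
which follows from the preceding lemma applied to $\gamma^{-1}x\in F\subset F^r$ for every $r>0$ (giving $d_\Gamma(\Phi(\gamma^{-1}x),\id)\le ar+b$ for all $r>0$, hence $\le b$), together with $\Phi(\gamma^{-1}x)=\gamma^{-1}\Phi(x)$ and the left invariance of $d_\Gamma$; equivalently one may argue with a sequence $\gamma v_n\in\gamma F'$, $v_n\in F'$, approximating the given point of $\gamma F$.

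For the first inclusion, take $x\in(IF)^r$ and choose $y\in IF$ with $d(x,y)<r$, say $y\in\gamma F$ with $\gamma\in I$. Since $\Gamma$ acts isometrically and $\gamma^{-1}y\in F$, we get $\gamma^{-1}x\in F^r$, so by the preceding lemma $\gamma^{-1}x\in B_sF'$ for every $s\ge ar+b$; writing $\gamma^{-1}x\in\beta F'$ with $d_\Gamma(\beta,\id)<s$, left invariance gives $d_\Gamma(\gamma\beta,\gamma)<s$, whence $\gamma\beta\in I^s$ and $x\in\gamma\beta F'\subset I^sF'$. For the second inclusion, take $x\in I_qF$ and pick $\gamma_0\in I_q$ with $x\in\gamma_0F$. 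For any $z\in X\setminus IF'$ one has $\Phi(z)\notin I$, hence $d_\Gamma(\gamma_0,\Phi(z))\ge d_\Gamma(\gamma_0,\Gamma\setminus I)>q$; approximating $x$ by $\gamma_0v_n\to x$ with $v_n\in F'$, so that $\Phi(\gamma_0v_n)=\gamma_0$, the lower rough-isometry inequality for the pair $(\gamma_0v_n,z)$ passes to the limit to yield
\[
d(x,z)\ \ge\ \tfrac1a\,d_\Gamma(\gamma_0,\Gamma\setminus I)-b\ >\ \tfrac qa-b\ \ge\ r,
\]
where the last step uses $q\ge ar+ab$. Since this lower bound is uniform in $z$, we get $d(x,X\setminus IF')>r$, i.e.\ $x\in(IF')_r$.

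For the third inclusion, write $\partial^r(IF)=(IF)^r\setminus(IF)_r$ and let $x$ belong to it. From $x\in(IF)^r$ and the first inclusion with $s=ar+b\le q$ we obtain $\Phi(x)\in I^{ar+b}\subset I^q$. From $x\notin(IF)_r$ we have $d(x,X\setminus IF)\le r$; choosing $z_n\in X\setminus IF$ with $d(x,z_n)\to d(x,X\setminus IF)$ and observing $\Phi(z_n)\notin I$ (because $IF'\subset IF$), the rough isometry gives $d_\Gamma(\Phi(x),\Gamma\setminus I)\le d_\Gamma(\Phi(x),\Phi(z_n))\le a\,d(x,z_n)+b\to a\,d(x,X\setminus IF)+b\le ar+b\le q$, so $\Phi(x)\notin I_q$. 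Hence $\Phi(x)\in I^q\setminus I_q=\partial^qI$, and since $x\in\Phi(x)F'$ while the cells $\gamma F'$ are disjoint, $x\in(\partial^qI)F'$.

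The only real obstacle is the mismatch between the closed set $F=\overline{F'}$ that enters $IF$ and the fundamental domain $F'$ used to define $\Phi$: a point of $\gamma F$ need not lie in $\gamma F'$, so before invoking the rough-isometry inequalities one must either pass to a limit of cell points or use the auxiliary bound above; after that everything is triangle-inequality bookkeeping, the constants being chosen precisely so as to absorb the additive defect $b$ when inverting $\Phi$, and with a little attention needed to strict versus non-strict inequalities in the second inclusion.
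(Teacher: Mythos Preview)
Your argument is correct and follows the same route as the paper: transport each inclusion to $\Gamma$ via the rough isometry $\Phi$ and the preceding lemma, and obtain the third from the first two. The only difference is your handling of the closure $F=\overline{F'}$ by approximating with points of $\gamma_0 F'$ (the paper instead uses $F\subset B_bF'$ together with $I_qB_b\subset I_{q-b}$), which incidentally shows that $q\ge ar+ab$ already suffices for the second inclusion.
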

\begin{proof}
Since $\Gamma$ acts on $X$ by isometries, $(\gamma F)^r =\gamma F^r$.
By the previous Lemma $(I F)^r =I F^r\subset I B_{s} F'=I^{s} F'$  and the first inclusion holds.
For $x \in I_q F'$ there exist unique $\gamma \in I_q, x_0 \in F'$ such that $ x= \gamma x_0$.
By definition $d_\Gamma (\gamma, \beta) > q$ for all $\beta \in \Gamma\setminus I$.
Let $y \in X\setminus IF'$ be arbitrary. Then there are unique $y_0 \in F', \alpha\in \Gamma\setminus I$
with $y = \alpha y_0$. Consequently
$ d(x,y)=d(\gamma x_0,\alpha y_0)  \ge  d_\Gamma (\gamma, \alpha)/a - b > q/a -b$ and thus $ x \in (IF')_r$
for $ r = q/a -b$. Since  $I_{ar +ab + b} F \subset I_{ar +ab + b} B_b F'
\subset I_{ar +ab}  F' \subset (I F')_r$  we proved the second inclusion.
The last inclusion is a combination of the previous two inclusions.
\end{proof}

Recall that $M_S$ denotes the maximal number of points with distance at least one contained in a ball of radius $S$ in $X$.
\begin{prp}\label{UniversalBound}
For any $\rho>0$ and $\displaystyle C :=\frac{M_{2a(\rho +b)}}{|B_\rho|}$
\[
\omega (I F') \leq C\, |I^\rho|,
\]
for all $I\subset \Gamma $ and all $\omega\in \mathcal{D}$.
\end{prp}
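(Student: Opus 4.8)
The plan is a double-counting argument over the group: I will count the points of $X(\omega)$ inside $I F'$ by integrating a suitable indicator function over $\Gamma$, and use the rough isometry to convert $\Gamma$-distances into $X$-distances.

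Fix $\omega\in\cD$, put $A:=X(\omega)$, and for $x\in I F'$ let $\gamma_x:=\Phi(x)\in I$ be the unique element of $\Gamma$ with $x\in\gamma_x F'$. The first step is a purely local estimate: for each $\gamma\in\Gamma$ the set
\[
A_\gamma:=\{x\in A\cap I F'\mid d_\Gamma(\gamma_x,\gamma)<\rho\}
\]
has at most $M_{2a(\rho+b)}$ elements. Indeed, any two $x,y\in A_\gamma$ satisfy $d_\Gamma(\gamma_x,\gamma_y)<2\rho$, hence $d(x,y)\le a\,d_\Gamma(\gamma_x,\gamma_y)+b<2a\rho+b\le 2a(\rho+b)$ by the rough isometry property of $\Phi$ (here $a\ge 1$, $b\ge 0$). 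So, if $A_\gamma$ is nonempty, it lies in a ball of radius $2a(\rho+b)$ in $X$ centred at any of its points, and since $A\in\cD$ its points have mutual distance at least $1$; the definition of $M_S$ then gives $|A_\gamma|\le M_{2a(\rho+b)}$. Secondly, $A_\gamma=\emptyset$ whenever $\gamma\notin I^\rho$, because $\gamma_x\in I$ together with $d_\Gamma(\gamma_x,\gamma)<\rho$ would force $\gamma\in I^\rho$.

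Next I integrate $|A_\gamma|$ over $\Gamma$ in two ways. Writing $|A_\gamma|=\sum_{x\in A\cap I F'}\chi_{B_\rho(\gamma_x)}(\gamma)$ and applying Tonelli's theorem together with invariance of the Haar measure (so that $|B_\rho(\gamma_x)|=|B_\rho|$ for every $x$) gives
\[
\int_\Gamma |A_\gamma|\,d\gamma=\sum_{x\in A\cap I F'}|B_\rho(\gamma_x)|=|B_\rho|\cdot\omega(I F'),
\]
while the two observations above yield
\[
\int_\Gamma |A_\gamma|\,d\gamma=\int_{I^\rho}|A_\gamma|\,d\gamma\le M_{2a(\rho+b)}\,|I^\rho|.
\]
Comparing these and dividing by $|B_\rho|$ — which is strictly positive, being the Haar measure of the nonempty open set $B_\rho$, and finite, since $B_\rho$ is precompact — produces exactly $\omega(I F')\le C\,|I^\rho|$.

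I do not expect any real obstacle here. The only points needing a little care are the elementary estimate $2a\rho+b\le 2a(\rho+b)$ that lets $A_\gamma$ fit into a ball of the claimed radius, and the fact that $0<|B_\rho|<\infty$, which is precisely where the standing hypotheses on $\Gamma$ (invariant Haar measure, precompact balls) enter. If $\omega(I F')=\infty$ the inequality is simply read in $[0,\infty]$, and the same computation forces $|I^\rho|=\infty$ as well.
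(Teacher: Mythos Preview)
Your proof is correct and takes a genuinely different route from the paper. The paper argues combinatorially via packing numbers: it bounds $\omega(IF')$ by $M_{2S}\cdot N_X(S,IF')$, transfers the $S$-separated set to $\Gamma$ via the rough isometry to obtain a $\rho$-separated set in $I$, and then bounds $N_\Gamma(\rho,I)$ by $|I^\rho|/|B_\rho|$ using that the $\rho$-balls around the separated points are contained in $I^\rho$. Your argument replaces this discrete packing step by a clean averaging over the Haar measure: you integrate the local counting function $\gamma\mapsto|A_\gamma|$ and evaluate it two ways. This is arguably more direct, since it avoids introducing the auxiliary quantities $N_X$ and $N_\Gamma$ altogether and makes the role of the Haar measure transparent; the paper's approach, on the other hand, isolates the combinatorial content (packing at scale $S$ in $X$ versus packing at scale $\rho$ in $\Gamma$) more explicitly, which could be useful if one wanted finer control on the constant or to work in a setting without an invariant measure on $\Gamma$.
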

Recall that for $I$ precompact $|I^\rho|$ is finite.
\begin{proof}
Choose $S>ab$ and  define $N_X(S,A)$ to be the maximal number of points in $A\subset X$
with distance at least $S$ between them  and  $N_\Gamma (S,I)$
to be the maximal number of points in $I\subset \Gamma$ with distance at least $S$
between them. Then,
\[
 \omega (I F') \leq N_X(1,IF') \leq M_{2S}\, N_X(S,IF') \leq M_{2S}\, N_\Gamma (S/a -b,I).
\]
Set $\rho:=S/a -b>0$. Since
$
|B_\rho| \, N_\Gamma (\rho,I)= \big |  \bigcup_{i=1}^{N_\Gamma (\rho,I)} B_\rho(\gamma_i)\,\big| \le |I^\rho|
$
we obtain the claim.
\end{proof}

The proposition has the following consequence, which is crucial for our results.

\begin{prp}\label{boundary}
Let $(I_n)$ be a {F\o lner} sequence. Then for arbitrary $r\ge 0$ and $\omega \in \mathcal{D}$,
\begin{equation*}
\lim_{n\to \infty} \frac{\omega ((\partial^r I_n) F')}{|I_n|} =0
\text{ and }
\lim_{n\to \infty} \frac{\omega (\partial^r \Lambda_n)}{|I_n|} =0.
 \end{equation*}
\end{prp}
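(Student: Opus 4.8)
The plan is to reduce both limits to an application of Proposition \ref{UniversalBound} combined with the defining property of a F\o lner sequence. First I would observe that the second limit is essentially a consequence of the first: by Proposition \ref{comparable} we have $\partial^r \Lambda_n = \partial^r(I_n F) \subset (\partial^q I_n) F'$ for a suitable $q = q(r)$ depending only on $r, a, b$, hence $\omega(\partial^r \Lambda_n) \le \omega((\partial^q I_n) F')$. So it suffices to control $\omega((\partial^s I_n) F')$ for an arbitrary fixed $s \ge 0$, and then apply this with $s = r$ and with $s = q$.

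Next I would use Proposition \ref{UniversalBound} with some fixed $\rho > 0$ (say $\rho = 1$): it gives $\omega((\partial^s I_n) F') \le C \,|(\partial^s I_n)^\rho|$ with $C = M_{2a(\rho+b)}/|B_\rho|$ independent of $n$ and $\omega$. Now $(\partial^s I_n)^\rho \subset \partial^{s+\rho} I_n$, since thickening an annular boundary layer by $\rho$ still lands inside the boundary layer of width $s + \rho$ (more precisely, if $d_\Gamma(x, \partial^s I_n) < \rho$ then $x$ is within distance $s + \rho$ of both $I_n$ and $\Gamma \setminus I_n$, so $x \in I_n^{s+\rho} \setminus (I_n)_{s+\rho} = \partial^{s+\rho} I_n$). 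Hence it remains to show
\begin{equation*}
\lim_{n \to \infty} \frac{|\partial^{t} I_n|}{|I_n|} = 0
\end{equation*}
for every fixed $t \ge 0$, which is a purely group-theoretic statement about F\o lner sequences.

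For this last step I would argue as follows. Write $\partial^t I_n = I_n^t \setminus (I_n)_t$. One has $I_n^t \subset \overline{B_t}\, I_n$ (every point within distance $t$ of $I_n$ lies in $\overline{B_t}\gamma$ for some $\gamma \in I_n$) and, dually, $\Gamma \setminus (I_n)_t \subset \overline{B_t}(\Gamma \setminus I_n)$, which rearranges to $(I_n)_t \supset I_n \setminus \overline{B_t}(\Gamma\setminus I_n)$; combining, $\partial^t I_n \subset (\overline{B_t} I_n \setminus I_n) \cup (I_n \cap \overline{B_t}(\Gamma\setminus I_n)) \subset (\overline{B_t}I_n \triangle I_n) \cup (\overline{B_t}I_n \triangle I_n)$ up to the usual bookkeeping, so $|\partial^t I_n| \le 2|\overline{B_t} I_n \triangle I_n|$. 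Since $\overline{B_t}$ is compact, the F\o lner property applied with $K = \overline{B_t} \cup \{\id\}$ gives $|\overline{B_t}I_n \triangle I_n| \le |K I_n \triangle I_n| < \epsilon |I_n|$ for $n$ large; letting $\epsilon \to 0$ yields the claim. Putting the three reductions together proves both limits.

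The main obstacle I anticipate is purely technical rather than conceptual: getting the inclusions between the various boundary-type sets $\Lambda^r$, $\Lambda_r$, $\partial^r\Lambda$ and their group analogues exactly right, keeping track of which constants ($a$, $b$, $\rho$, $q$, $t$) propagate where, and correctly handling the asymmetry in the definitions of $I^r$ versus $I_r$ when translating a ``width-$t$ collar'' into a difference of translates. The F\o lner estimate itself is standard once the inclusion $\partial^t I_n \subset \overline{B_t}I_n \triangle I_n$ (or a comparable two-sided bound) is in hand, and the uniformity in $\omega$ is already packaged in Proposition \ref{UniversalBound}, so no further work is needed there.
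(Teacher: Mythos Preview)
Your proposal is correct and follows essentially the same route as the paper: apply Proposition~\ref{UniversalBound} to bound $\omega((\partial^s I_n)F')$ by $C\,|(\partial^s I_n)^\rho| \le C\,|\partial^{s+\rho} I_n|$, invoke the F\o lner property to kill this, and then deduce the second limit from the first via the inclusion $\partial^r(I_nF) \subset (\partial^q I_n)F'$ of Proposition~\ref{comparable}. The paper's proof is in fact terser than yours---it simply asserts that the F\o lner property handles $|\partial^{r+\rho} I_n|/|I_n|$ without spelling out the $\overline{B_t}I_n \triangle I_n$ bookkeeping---so your only extra work is the (standard) translation of the boundary-collar condition into the symmetric-difference form of the F\o lner condition, where you should be a bit more careful with left versus right translates when making the inclusion $I_n \cap \overline{B_t}(\Gamma\setminus I_n) \subset \overline{B_t}I_n \triangle I_n$ precise.
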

\begin{proof}
Proposition \ref{UniversalBound} gives us $
 \omega ((\partial^r I) F') \le \frac{M_{a\rho+b}}{|B_\rho|} |(\partial^r I)^\rho|
\le \frac{M_{a\rho+b}}{|B_\rho|} |(\partial^{r+\rho} I)| $, thus the {F\o lner} property
of $(I_n)$ implies the first equality. Since $r>0$ was arbitrary and
$\partial^r (I F)\subset (\partial^{q} I) F$, for $q \ge ar +ab+b$,
the second equality follows immediately.
\end{proof}

\begin{lem}\label{LA} For $\omega\in \mathcal{D}$  and $\Lambda\subset X$ let
  $U$ be a subspace of $\ell^2 (\Lambda (\omega))$. For $S\geq 0$ denote by
  $U_S$ the subspace consisting of all functions in $U$ which  vanish outside of $\Lambda_S$.
Then,
$$ 0 \le \dim( U) -     \dim (U_S)\leq \omega ( \partial^S \Lambda).$$
\end{lem}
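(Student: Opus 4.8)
The plan is to exhibit the quotient $U/U_S$ as embedded in a space whose dimension is controlled by $\omega(\partial^S\Lambda)$. Concretely, consider the restriction map $\pi\colon U \to \ell^2(\partial^S\Lambda\cap\Lambda(\omega))$ that sends $v\in U$ to $v$ restricted to the "shell" $\partial^S\Lambda$. By definition $U_S$ consists exactly of those $v\in U$ that vanish on $\Lambda\setminus\Lambda_S$, and since $\Lambda\setminus\Lambda_S \subseteq \partial^S\Lambda$ (recall $\partial^S\Lambda=\Lambda^S\setminus\Lambda_S \supseteq \Lambda\setminus\Lambda_S$, as $\Lambda\subset\Lambda^S$), a function $v\in U$ supported in $\Lambda(\omega)$ vanishes on $\Lambda\setminus\Lambda_S$ precisely when its restriction to $\Lambda(\omega)\cap\partial^S\Lambda$ is zero. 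Hence $U_S = \ker(\pi)$, which immediately gives $\dim U - \dim U_S = \dim\ran(\pi)$.

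Next I would bound $\dim\ran(\pi)$ by the dimension of the ambient space $\ell^2(\partial^S\Lambda\cap\Lambda(\omega))$, which is simply the cardinality $|\partial^S\Lambda\cap\Lambda(\omega)| = \omega(\partial^S\Lambda)$. This yields $\dim U - \dim U_S \le \omega(\partial^S\Lambda)$. The nonnegativity $\dim U - \dim U_S \ge 0$ is automatic because $U_S$ is a subspace of $U$. One small point to be careful about is the case $S=0$: then $\Lambda_0$ may or may not equal $\Lambda$ depending on the convention for the strict inequality in the definition of $\Lambda_r$, but in all cases $\Lambda\setminus\Lambda_0\subseteq\partial^0\Lambda$ still holds, so the argument is unaffected; and when $\partial^S\Lambda(\omega)$ is empty the estimate reads $0\le 0$.

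I do not anticipate a genuine obstacle here — the lemma is essentially a rank-nullity bookkeeping statement. The only thing worth double-checking is the set inclusion $\{x\in\Lambda : v(x)\neq 0 \text{ is not forced to vanish}\}$, i.e.\ that "vanishing outside $\Lambda_S$" is equivalent to "vanishing on the shell", which hinges on $v$ already being supported in $\Lambda(\omega)$ so that the only coordinates that can be nonzero and lie outside $\Lambda_S$ are those in $(\Lambda\setminus\Lambda_S)(\omega)\subseteq\partial^S\Lambda(\omega)$. Once that is in place the two displayed inequalities follow in one line each.
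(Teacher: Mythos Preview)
Your proposal is correct and essentially identical to the paper's proof: the paper defines the restriction map $Q\colon U\to \ell^2((\Lambda\setminus\Lambda_S)(\omega))$, notes $\ker Q=U_S$, applies rank--nullity, and bounds $\dim\ran Q\le\omega(\Lambda\setminus\Lambda_S)\le\omega(\partial^S\Lambda)$. Your target space $\ell^2(\partial^S\Lambda\cap\Lambda(\omega))$ coincides with the paper's since $\partial^S\Lambda\cap\Lambda=\Lambda\setminus\Lambda_S$, so the two arguments differ only in notation.
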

\begin{proof} Let $Q \colon U \longrightarrow \ell^2 \big((\Lambda\setminus\Lambda_S)(\omega )\big)$ be the natural restriction map. Then,
$$ \dim(U) -\dim(\ker Q)= \dim(\ran Q) .$$
As $\ker Q = U_S$ and $ \dim(\ran Q) \leq \omega (\Lambda\setminus \Lambda_S )\leq  \omega ( \partial^S \Lambda)$, the statement follows.
\end{proof}

\section{Covariant operators and their trace}
In this section we briefly discuss  covariant operators and their natural trace. We  will then  provide a proof of Theorem \ref{main1}.

\medskip

Let $\mathcal{N}$ be the set of all covariant decomposable bounded operators.
Obviously, $\mathcal{N}$ is a vector space in the natural way. Moreover, by
$T^\ast:=(T_\omega^\ast)$ and $ TS :=( T_\omega S_\omega)$ it becomes a
$\ast$-algebra. In fact, it is even a von Neumann algebra.  We will not use this
in the sequel. We will use that for any selfadjoint $T=(T_\omega) \in \mathcal{N}$
the operator
\[
 f (T) = ( f(T_\omega)) \ \text{ for a bounded and measurable }\ f \colon \RR \to \CC
\]
defined by spectral calculus is an element of $\mathcal{N}$ as well.
We start by having a look at functions which satisfy
\eqref{e-admissible}.

\begin{prp}\label{Huhu}
For every  measurable precompact $I\subset \Gamma$ with $0 < |I|$, the function
 $|I|^{-1} \chi_{I F'}$ satisfies \eqref{e-admissible}.
\end{prp}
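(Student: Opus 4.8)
The plan is to verify the two defining conditions of \eqref{e-admissible} directly for $u := |I|^{-1}\chi_{IF'}$, namely that $u$ has compact support and that $\int u(\gamma^{-1}x)\,d\gamma = 1$ for every $x\in X$. The support condition is immediate: $\chi_{IF'}$ is supported in $IF'$, whose closure is contained in $\overline{I}\,F$ (using $F = \overline{F'}$), and $\overline{I}$ is compact since $I$ is precompact while $F$ is compact; hence $u$ has compact support. So the real content is the integral identity, and that is where I would spend the effort.

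For the integral, fix $x\in X$. First I would note that since $F'$ is a fundamental domain for the $\Gamma$-action, there is a unique $\alpha\in\Gamma$ with $x\in\alpha F'$, equivalently a unique point $x_0\in F'$ and $\alpha$ with $x=\alpha x_0$. Then $\gamma^{-1}x\in IF'$ holds if and only if $\gamma^{-1}\alpha x_0\in IF'$; because $F'$ is a fundamental domain and $\gamma^{-1}\alpha x_0\in (\gamma^{-1}\alpha)F'$, this is equivalent to $\gamma^{-1}\alpha\in I$, i.e.\ $\gamma\in\alpha I^{-1}$. Therefore
\[
\int_\Gamma u(\gamma^{-1}x)\,d\gamma = \frac{1}{|I|}\int_\Gamma \chi_{IF'}(\gamma^{-1}x)\,d\gamma = \frac{1}{|I|}\,|\alpha I^{-1}| = \frac{1}{|I|}\,|I^{-1}| = 1,
\]
where the last two steps use left-invariance of the Haar measure ($|\alpha I^{-1}| = |I^{-1}|$) and unimodularity of $\Gamma$ ($|I^{-1}| = |I|$). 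This also requires that $\gamma\mapsto\chi_{IF'}(\gamma^{-1}x)$ be measurable, which follows from measurability of the $\Gamma$-action together with the fact that $IF'$ is measurable (noted in the text as a monotone-class consequence of $F'$ being a countable union of compact sets and $I$ being measurable).

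The main obstacle — really the only subtle point — is the bookkeeping with the fundamental domain: one must be careful that "$x\in\gamma F'$" picks out $\gamma$ uniquely so that the set $\{\gamma : \gamma^{-1}x\in IF'\}$ is genuinely a single left translate $\alpha I^{-1}$ and not something larger. This is exactly the content of $F'$ being a \emph{fundamental domain} (a set of coset representatives), so once that is invoked cleanly the computation is routine. I would also remark that the precompactness hypothesis on $I$ is used only to guarantee $|I|<\infty$ (so that $|I|^{-1}$ makes sense and $u$ is genuinely integrable) and compact support; the identity itself would go through for any measurable $I$ of finite positive Haar measure.
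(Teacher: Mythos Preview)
Your argument is correct and follows essentially the same route as the paper: both verify compact support via precompactness of $I$ and continuity of the action, then write $x=\alpha x_0$ with $x_0\in F'$ to identify $\chi_{IF'}(\gamma^{-1}x)=\chi_{\alpha I^{-1}}(\gamma)$ and conclude by left-invariance plus unimodularity. The paper's proof is terser but the content is identical; your additional remarks on measurability and on where precompactness is actually used are accurate elaborations.
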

\begin{proof}  As $I$ is precompact and the action of $\Gamma$ is continuous by assumption, the function $\chi_{I F'}$ has compact support.
Let $x\in X$ be arbitrary. Then, $x$ can be uniquely written as $x =
\alpha x_0$ with $\alpha \in \Gamma$ and $x_0\in F'$. A short
calculation then shows $\chi_{I F'}(\gamma^{-1} x) = \chi_{\alpha
I^{-1}} (\gamma)$ and the claim follows easily from unimodularity.
\end{proof}

For a function $u$ on $X$ and $I\subset \Gamma$ we define $u_I $ by
$u_I (x) :=\int_I u (\gamma^{-1} x) d\gamma$.

\begin{prp} If $u$ satisfies \eqref{e-admissible}, so does $|I|^{-1} u_I$ for any
 precompact measurable   $I\subset \Gamma$ with $0 < |I|$.
\end{prp}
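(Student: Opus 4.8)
The plan is to verify the two defining properties in \eqref{e-admissible} for the function $v := |I|^{-1} u_I$, namely that $v$ has compact support and that $\int v(\gamma^{-1} x)\, d\gamma = 1$ for all $x \in X$. For the compact support, first I would note that since $u$ satisfies \eqref{e-admissible}, it has compact support, say $\supp u \subset K$ for some compact $K \subset X$. Then for $x$ outside the set $\{\gamma y \mid \gamma \in \overline{I},\ y \in K\}$ (which is compact, being the continuous image of the compact set $\overline{I} \times K$ under the action map), the integrand $u(\gamma^{-1} x)$ vanishes for every $\gamma \in I$, so $u_I(x) = 0$. Hence $u_I$, and therefore $v$, has compact support.

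For the integral identity, the key step is an application of Fubini's theorem together with the left-invariance of the Haar measure. Writing out
\[
\int_\Gamma v(\gamma^{-1} x)\, d\gamma = \frac{1}{|I|} \int_\Gamma \int_I u\bigl((\gamma \beta^{-1})^{-1} x\bigr)\, d\beta\, d\gamma,
\]
wait — one must be careful about how the $\Gamma$-action composes; the point is that $u_I(\gamma^{-1} x) = \int_I u(\beta^{-1} \gamma^{-1} x)\, d\beta = \int_I u((\gamma\beta)^{-1} x)\, d\beta$. Substituting and swapping the order of integration (justified since $u$ is bounded measurable with compact support and $I$ is precompact, so the double integral is absolutely convergent), I get
\[
\int_\Gamma v(\gamma^{-1} x)\, d\gamma = \frac{1}{|I|} \int_I \left( \int_\Gamma u\bigl((\gamma\beta)^{-1} x\bigr)\, d\gamma \right) d\beta.
\]
For each fixed $\beta \in I$, the substitution $\gamma \mapsto \gamma \beta^{-1}$ and left-invariance of Haar measure gives $\int_\Gamma u((\gamma\beta)^{-1} x)\, d\gamma = \int_\Gamma u(\gamma^{-1} x)\, d\gamma = 1$ by \eqref{e-admissible} applied to $u$. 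Hence the inner integral equals $1$ for every $\beta$, and the outer integral over $I$ contributes a factor $|I|$, which cancels the prefactor $|I|^{-1}$, yielding the value $1$ as required.

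The only mildly delicate point — and the one I would state carefully — is the bookkeeping of the group action and the correct form of the substitution that makes the invariance argument go through; everything else (Fubini, compactness of the image of $\overline{I}\times K$, unimodularity if one prefers to phrase the substitution on the right) is routine. I expect no real obstacle here: this is essentially the observation that averaging an admissible function over a finite-measure subset of the group, then normalizing, preserves admissibility, which is the same mechanism already used implicitly in the discussion preceding Theorem \ref{main1}.
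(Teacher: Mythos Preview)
Your proposal is correct and follows exactly the approach the paper indicates: the paper's own proof is the single sentence ``This follows by a direct calculation using Fubini theorem and unimodularity,'' and you have simply written out that calculation in detail. The only quibble is terminological: the substitution $\gamma \mapsto \gamma\beta^{-1}$ uses \emph{right}-invariance of the Haar measure (i.e.\ unimodularity), not left-invariance, but you already flag this yourself in the final paragraph.
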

\begin{proof} This follows by a direct calculation using Fubini theorem and  unimodularity.
\end{proof}

\begin{thm}\label{tracetheorem} Let $u$ satisfy \eqref{e-admissible}. Then, the map

$$\tau\colon \mathcal{N} \longrightarrow \RR, \; \tau (T) := \frac{1}{\dens(m)}  \int \Tr (u T_\omega) d\mu(\omega)$$
does not depend on $u$ and satisfies the following properties:
\begin{itemize}
\item $\tau$ is faithful, i.e. for $T\geq 0$, $\tau (T)=0$ if and only if $T=0$.
\item $\tau$ has the trace property, i.e. $\tau (ST) = \tau (T S)$ for all $T,S\in \mathcal{N}$.
\end{itemize}
\end{thm}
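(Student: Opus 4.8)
The plan is to verify, in this order, that $\tau$ is well defined and independent of $u$, that it is faithful, and that it is tracial, using Section~\ref{Consequences}, the equivariance relations recorded in the Remark of Section~\ref{s-Setting}, the $\Gamma$-invariance of $\mu$, and unimodularity. For $T=(T_\omega)\in\mathcal N$ with $\|T\|\le C$ and $u$ supported in a ball of radius $\le S$, the sum $\Tr(uT_\omega)=\sum_{x\in X(\omega)}u(x)T_\omega(x,x)$ ranges over at most $M_S$ points and is bounded by $M_S\|u\|_\infty C$, and $\omega\mapsto\Tr(uT_\omega)$ is measurable by the discussion in Section~\ref{s-Setting} and the appendix, so $\tau(T)$ is defined. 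To see that it does not depend on the admissible function, let $u,v$ both satisfy \eqref{e-admissible}: I insert $1=\int_\Gamma v(\gamma^{-1}x)\,d\gamma$ into $\Tr(uT_\omega)$, apply Fubini (legitimate since $u$ has compact support, so only boundedly many $x$ contribute, and $\{\gamma:v(\gamma^{-1}x)\neq0\}$ is precompact), substitute $x=\gamma z$ so the remaining sum runs over $z\in X(\gamma^{-1}\omega)$, use the equivariance identity $T_\omega(\gamma z,\gamma z)=T_{\gamma^{-1}\omega}(z,z)$, and replace $\omega$ by $\gamma\omega$ by $\Gamma$-invariance of $\mu$. This rewrites $\int_\Omega\Tr(uT_\omega)\,d\mu$ as $\int_\Gamma\int_\Omega\sum_{z\in X(\omega)}u(\gamma z)\,v(z)\,T_\omega(z,z)\,d\mu\,d\gamma$, and a second Fubini together with $\int_\Gamma u(\gamma z)\,d\gamma=\int_\Gamma u(\gamma^{-1}z)\,d\gamma=1$ collapses this to $\int_\Omega\Tr(vT_\omega)\,d\mu$.

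For faithfulness, by the independence just proved I may use $u=|B_k|^{-1}\chi_{B_kF'}\ge0$ (admissible by Proposition~\ref{Huhu}). If $T\ge0$ in $\mathcal N$ then $T_\omega\ge0$ for $\mu$-a.e.\ $\omega$, so $\Tr(uT_\omega)=|B_k|^{-1}\sum_{x\in B_kF'\cap X(\omega)}\langle T_\omega\delta_x,\delta_x\rangle\ge0$ and $\tau(T)\ge0$. If in addition $\tau(T)=0$, this nonnegative integrand vanishes off a $\mu$-null set $N_k$, so that $\langle T_\omega\delta_x,\delta_x\rangle=0$ — hence $T_\omega\delta_x=0$, since $T_\omega\ge0$ — for all $x\in B_kF'\cap X(\omega)$; off the null set $\bigcup_kN_k$ this holds for every $x\in\bigcup_k\bigl(B_kF'\cap X(\omega)\bigr)=\Gamma F'\cap X(\omega)=X(\omega)$, using that $\Gamma=\bigcup_kB_k$ (balls are precompact) and $\Gamma F'=X$ ($F'$ a fundamental domain). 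Then $T_\omega$ vanishes on the dense subspace $c_0(X(\omega))$, so $T_\omega=0$, i.e.\ $T=0$; the converse is trivial.

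For the trace property, suppose first that $S,T\in\mathcal N$ have hopping range $\le R_0$. Expanding $\Tr(u(ST)_\omega)-\Tr(u(TS)_\omega)$, relabelling $x\leftrightarrow y$ in the second term, and using that the phases $s_\gamma$ cancel in the products (Remark of Section~\ref{s-Setting}), I get, for any admissible $w$,
\[
\dens(m)\bigl(\tau(ST)-\tau(TS)\bigr)=\int_\Omega\sum_{x,y\in X(\omega)}\bigl(w(x)-w(y)\bigr)S_\omega(x,y)T_\omega(y,x)\,d\mu(\omega),
\]
where only pairs with $d(x,y)<R_0$ contribute and $\sum_{y:\,d(x,y)<R_0}|S_\omega(x,y)T_\omega(y,x)|\le\|S\|\,\|T\|$ by Cauchy--Schwarz. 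I then take $w=w_n:=|I_n|^{-1}u_{I_n}$ for a tempered F\o lner sequence $(I_n)$ and a fixed admissible $u$ (e.g.\ $u=|B_1|^{-1}\chi_{B_1F'}$) with $\{\gamma:u(\gamma^{-1}x)\neq0\}\subset B_\rho(\Phi(x))$: then $w_n$ is admissible, $0\le w_n\le|I_n|^{-1}$, $w_n(x)=|I_n|^{-1}$ whenever $\Phi(x)\in(I_n)_\rho$, and $w_n(x)=0$ whenever $\Phi(x)\notin(I_n)^\rho$. Since the rough isometry keeps $\Phi(x),\Phi(y)$ within a fixed $d_\Gamma$-distance once $d(x,y)<R_0$, the difference $w_n(x)-w_n(y)$ vanishes unless $x\in(\partial^{s}I_n)F'$ for an $s$ depending only on $a,b,\rho,R_0$; hence the right-hand side is at most $\dfrac{\|S\|\,\|T\|}{|I_n|}\,m\bigl((\partial^{s}I_n)F'\bigr)$, which by Proposition~\ref{UniversalBound} is $\le\mathrm{const}\cdot|\partial^{s+1}I_n|/|I_n|\to0$ by the F\o lner property. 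Thus $\tau(ST)=\tau(TS)$ in this case.

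For arbitrary $S,T\in\mathcal N$ I pass to the limit through the truncations $S^{(R)}$, with $S^{(R)}_\omega(x,y):=S_\omega(x,y)$ if $d(x,y)<R$ and $0$ otherwise, which again lie in $\mathcal N$ (covariance is preserved since $\Gamma$ acts isometrically, and the Schur test gives $\|S^{(R)}\|\le M_R^{1/2}\|S\|$). Cauchy--Schwarz — first in the summation variables, then over $\Omega$ — yields $|\tau(AB)|\le\tau(AA^*)^{1/2}\,\tau(B^*B)^{1/2}$ for all $A,B\in\mathcal N$, with no hopping-range hypothesis; moreover $\tau\bigl((S^{(R)}-S)(S^{(R)}-S)^*\bigr)=\dfrac{1}{\dens(m)}\int_\Omega\sum_xu(x)\sum_{y:\,d(x,y)\ge R}|S_\omega(x,y)|^2\,d\mu\to0$ as $R\to\infty$ by dominated convergence, likewise for $T$, while $\tau\bigl((T^{(R)})^*T^{(R)}\bigr)\le\tau(T^*T)<\infty$. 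Splitting $\tau(S^{(R)}T^{(R)})-\tau(ST)=\tau\bigl((S^{(R)}-S)T^{(R)}\bigr)+\tau\bigl(S(T^{(R)}-T)\bigr)$ and applying the bound gives $\tau(S^{(R)}T^{(R)})\to\tau(ST)$ and, symmetrically, $\tau(T^{(R)}S^{(R)})\to\tau(TS)$; combined with the finite-range case this proves the trace property. I expect the main work to lie in this last assertion: pinning down the boundary-layer estimate in the finite-range step — the bookkeeping with $\Phi$, the constants, and Propositions~\ref{comparable}--\ref{UniversalBound} — and then making the truncation argument precise for general elements of $\mathcal N$; the several Fubini interchanges are routine once one records that $u$ has compact support.
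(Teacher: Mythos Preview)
Your argument is correct. The independence-of-$u$ step and the faithfulness step coincide with the paper's proof essentially verbatim. For the trace property, however, you take a genuinely different route: you first prove $\tau(ST)=\tau(TS)$ for finite-range $S,T$ via a F\o lner boundary-layer estimate on $\sum_{x,y}(w_n(x)-w_n(y))S_\omega(x,y)T_\omega(y,x)$, and then extend to general $\mathcal N$ by truncation and the Cauchy--Schwarz inequality for $\tau$. The paper instead repeats, verbatim, the symmetrization trick already used for independence of $u$: it inserts $1=\int_\Gamma u(\gamma^{-1}y)\,d\gamma$ on the \emph{second} variable in $\tau(TK)=\int\sum_{x,y}u(x)T_\omega(x,y)K_\omega(y,x)\,d\mu$, then applies covariance and $\mu$-invariance to move the weight from $u(x)$ to $u(y)$, yielding $\tau(KT)$ directly. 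No hopping-range hypothesis, no F\o lner sequence, and no approximation step are needed. Your approach is considerably longer but is self-contained and anticipates the boundary-layer machinery used later in Lemmas~\ref{vague} and~\ref{discontinuity}; the paper's approach is a two-line reprise of the first calculation and shows that the trace property is really the same phenomenon as the independence of $u$.
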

\begin{proof} This can essentially be obtained from the groupoid theoretical considerations in
  \cite{LenzPV-02}.  For the convenience of the reader we give a direct proof (see \cite{LenzS-03b,LenzS-03c}
for similar calculations as well).

\medskip

We first show that  $\tau$ does not depend on $u$. Let $u$ and $v$ satisfying
  \eqref{e-admissible} be given. Then,
\begin{eqnarray*} \int_\Omega \sum_{x\in X(\omega)} u(x) T_\omega (x,x) d\mu (\omega)
  &=& \int_\Omega \sum_{x\in X(\omega)} u(x) T_\omega (x,x) \left( \int_\Gamma
  v(\gamma^{-1} x) d\gamma\right) d\mu (\omega)\\
(\mbox{Fubini})\; &=& \int_\Omega \int_\Gamma \left(\sum_{x\in X(\omega)} u(x) T_\omega
  (x,x) v(\gamma^{-1} x) \right) d\gamma d\mu (\omega)\\
(\mbox{covariance}) \:&=& \int_\Omega \int_\Gamma \left(\sum_{y\in \gamma^{-1}\omega}
  u(\gamma y)  T_{\gamma^{-1} \omega} (y,y) v(y)\right)  d\gamma d\mu
  (\omega)\\
(\mu \;\mbox{invariant})\:\; &=& \int_\Omega \int_\Gamma \sum_{y\in X(\omega)}  u (\gamma y) T_\omega (y,y)
  v(y)d \gamma d\mu (\omega)\\
(\mbox{Fubini})\; &=& \int_\Omega \sum_{y\in X(\omega)}  \left(\int_\Gamma u (\gamma
  y)d\gamma\right)  T_\omega (y,y) v(y)  d\mu (\omega)\\
&=& \int_\Omega \sum_{y\in X(\omega)} v(y) T_\omega (y,y) d\mu (\omega).
\end{eqnarray*}
and independence is proven.  By independence of $u$ and Proposition \ref{Huhu} we can replace $u$ by $
\frac{1}{|I|}\chi_{I F'}$ for any precompact $I \subset \Gamma$.
If $T\ge 0$ and $\tau(T)=0$  we have $\chi_{B_r F'} T_\omega \chi_{B_r F'}=0$
for almost all $\omega$ and any $r >0$. Since the sequence $B_r F'$ exhausts $X$,
$T$ must be zero. Thus faithfulness is proven.
We finally show $\tau ( T K) = \tau (K T)$. The calculation is
similar to the one to show independence of $u$. The definition of
$\tau$ gives after inserting $1=\int_\Gamma u (\gamma^{-1} y) d\gamma$ and
using Fubini
\begin{eqnarray*}
\tau (T K) &=& \int_\Omega \int_\Gamma \left(\sum_{x,y\in X(\omega)} T_\omega (x,y)
K_\omega (y,x) u(x) u(\gamma^{-1} y) \right)d\gamma d\mu (\omega)\\
\mbox{(covariance)}\; &=& \int_\Omega \int_\Gamma \left(\sum_{x,y\in \gamma^{-1}
    \omega} T_{\gamma^{-1} \omega} (x,y)
K_{\gamma^{-1} \omega} (y,x) u(\gamma x) u( y) \right) d\gamma d\mu (\omega)\\
(\mu \;\mbox{invariant})\:\; &=& \int_\Omega \int_\Gamma  \left(\sum_{x,y\in X(\omega)} T_{\omega} (x,y)
K_{ \omega} (y,x) u(\gamma x) u( y) \right) d\gamma d\mu (\omega)\\
&=& \tau ( K T).
\end{eqnarray*}
This finishes the proof.
\end{proof}

\begin{dfn}
Let $U$ be a subspace of $\mathcal{H}=\int^\oplus \ell^2 (X(\omega))
d\mu (\omega)$ such that the orthogonal projection $P=P_U$ onto $U$
belongs to $\mathcal{N}$. Then, $\tau (P)$ is called the equivariant
dimension of $U$.
\end{dfn}

For later use we note the following consequence of Theorem
\ref{tracetheorem} and Proposition \ref{Huhu}.
\begin{cor}\label{ArbitraryI}
For each measurable  $I\subset \Gamma$ with $0 < |I|<\infty$ the equation $$\tau (T)
  =\frac{1}{|I|}  \frac{1}{\dens(m)}  \int \Tr (\chi_{I F'} T_\omega) d\mu (\omega)$$
holds for every $T\in
  \mathcal{N}$.
\end{cor}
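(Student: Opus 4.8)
The plan is to deduce everything from Theorem \ref{tracetheorem} together with Proposition \ref{Huhu}, treating first the case of precompact $I$ and then reducing the general measurable case to it by an exhaustion argument.

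First I would handle precompact $I$. If $I\subset\Gamma$ is precompact and measurable with $0<|I|<\infty$, then Proposition \ref{Huhu} says that $u:=|I|^{-1}\chi_{IF'}$ satisfies \eqref{e-admissible}. Feeding this particular $u$ into Theorem \ref{tracetheorem}, and noting $\Tr(uT_\omega)=\sum_{x\in X(\omega)}u(x)T_\omega(x,x)=|I|^{-1}\Tr(\chi_{IF'}T_\omega)$, one reads off the asserted identity for every $T\in\mathcal N$. Here $\Tr(\chi_{IF'}T_\omega)$ is a finite sum, since $\chi_{IF'}$ has compact support and the diagonal entries $T_\omega(x,x)$ are uniformly bounded.

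For general measurable $I$ with $0<|I|<\infty$ I would set $I_n:=I\cap B_n$. Since $d_\Gamma$ is finite-valued, $\Gamma=\bigcup_nB_n$, so $I_n\uparrow I$; each $I_n$ is measurable and precompact (its closure is a closed subset of the compact set $\overline{B_n}$), and $|I_n|\uparrow|I|$, so the precompact case applies to $I_n$ for all large $n$. Because $F'$ is a fundamental domain, $\chi_{I_nF'}\uparrow\chi_{IF'}$ pointwise on $X$. For $T\ge0$ all diagonal entries $T_\omega(x,x)=\la\delta_x,T_\omega\delta_x\ra$ are nonnegative, hence $\Tr(\chi_{I_nF'}T_\omega)\uparrow\Tr(\chi_{IF'}T_\omega)$; applying the precompact case to each $I_n$ and using monotone convergence (for the Haar measure of $I_n\uparrow I$, and for the $\mu$-integral over $\Omega$) gives $|I|\,\dens(m)\,\tau(T)=\int\Tr(\chi_{IF'}T_\omega)\,d\mu(\omega)$, which is the claim for nonnegative $T$, and in particular shows the right-hand side is finite.

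Finally, to pass to arbitrary $T\in\mathcal N$ I would decompose $T=A_+-A_-+i(B_+-B_-)$ with $A=\tfrac12(T+T^*)$, $B=\tfrac1{2i}(T-T^*)$ self-adjoint and $A_\pm,B_\pm\ge0$ defined by spectral calculus, all lying in $\mathcal N$. The nonnegative case yields $\int\Tr(\chi_{IF'}P_\omega)\,d\mu<\infty$ for each of the four pieces $P$, whence for $\mu$-a.e.\ $\omega$ the series $\sum_{x\in X(\omega)\cap IF'}T_\omega(x,x)$ converges absolutely and $\Tr(\chi_{IF'}T_\omega)$ is the corresponding alternating combination of the four diagonal sums; linearity of $\tau$ and of the integral then finishes the argument. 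The only genuinely non-routine point is that for merely measurable (non-precompact) $I$ the function $\chi_{IF'}$ need not have compact support, so Theorem \ref{tracetheorem} cannot be invoked verbatim; the exhaustion of $I$ by precompact sets combined with monotone convergence is exactly what bridges this gap, and I expect it to be the main obstacle.
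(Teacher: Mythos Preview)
Your argument is correct and follows the same overall strategy as the paper: invoke Proposition~\ref{Huhu} and Theorem~\ref{tracetheorem} for precompact $I$, then exhaust a general measurable $I$ by precompact pieces $I\cap B_n$.

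The only difference is in how the exhaustion is organized. You take the \emph{increasing} sequence $I_n=I\cap B_n$ and appeal to monotone convergence, which forces you to first treat $T\ge 0$ and then decompose a general $T\in\mathcal N$ as $A_+-A_-+i(B_+-B_-)$. The paper instead uses the \emph{disjoint} decomposition $J_0=I\cap B_1$, $J_n=I\cap(B_{n+1}\setminus B_n)$, so that $\chi_{IF'}=\sum_n\chi_{J_nF'}$ and one can write
\[
\int\Tr(\chi_{IF'}T_\omega)\,d\mu(\omega)=\sum_n\int\Tr(\chi_{J_nF'}T_\omega)\,d\mu(\omega)=\sum_n|J_n|\,\dens(m)\,\tau(T)=|I|\,\dens(m)\,\tau(T)
\]
directly for arbitrary $T\in\mathcal N$ (the interchange of sum and integral being justified since $\sum_n\int|\Tr(\chi_{J_nF'}T_\omega)|\,d\mu\le\|T\|\sum_n m(J_nF')=\|T\|\,\dens(m)\,|I|<\infty$). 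This sidesteps the positive/negative decomposition entirely, making the paper's version a bit shorter; your version is equally valid and perhaps more transparent about where positivity is actually used.
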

\begin{proof}
Set $J_0:= B_1 \cap I$ and $J_n :=(B_{n+1}\setminus B_{n}) \cap I$ for
$n\in \NN$. Assume without loss of generality that $|J_n|>0$ for all
$n$. Then, $I$ is the disjoint union of the $J_n$, $n\in \NN_0$ and
each $J_n$ satisfies the assumption of Proposition \ref{Huhu}. Hence,
we obtain
$$\frac{1}{|I|} \int \Tr (\chi_{I F'} T_\omega) d\mu (\omega) =
\frac{1}{|I|} \sum_{n\in \NN} \int \Tr (\chi_{J_n F'} T_\omega) d\mu
(\omega) = \frac{1}{|I|} \sum_{n\in \NN} |J_n| \dens(m)  \tau (T) = \dens(m) \tau (T).$$
\end{proof}

\begin{proof}[Proof of Theorem \ref{main1}]
Set $\nu_H(\varphi) =\tau(\varphi(H))$. Then $\nu_H$  is a positive functional
and hence defines a unique measure. By faithfulness of $\tau$ it is a spectral measure.
By definition, \eqref{e-main1} holds.
\end{proof}

\section{Equivariant dimension of subspaces and jumps of the IDS}
This section is devoted to a proof of  Theorem \ref{main2}. Note that we do
not need ergodicity to derive this result.

\medskip

Recall that every {F\o lner} sequence $(I_n)$ in $\Gamma$ induces a sequence
$\Lambda_n :=I_n F'$ of measurable subsets of $X$. By Proposition \ref{boundary}
$(\Lambda_n)$ is a van Hove sequence in $X$. We start with a technical
result.

Note that for $A_1\subset A_2\subset X$  we can  canonically  regard elements
of $ \ell^2 (A_1(\omega))$ as elements of $\ell^2 (A_2(\omega))$, as well,
by extending them by zero.

\begin{lem}
  Let $P\in \mathcal{N}$ with $P\geq 0$ and $\tau (P) >0$ be given.  Let $R>0$
  and a {F\o lner} sequence $(I_n)$ in $\Gamma$ be given. Then, there exists an
  $N\in \NN$ and a set $\widetilde{\Omega}$ in $\Omega$ of positive measure
  such that for all $\omega \in \widetilde\Omega$
\[
\ran (p_{\Lambda_N (\omega)} P_\omega) \cap \ell^2 (\Lambda_{N,R}
(\omega)) \neq \{0\}.
 \]
\end{lem}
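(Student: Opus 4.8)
The plan is to argue by contradiction, exploiting the fact that $\tau(P)$ can be computed both as an ``ambient'' equivariant dimension and, approximately, via the finite-volume projections $p_{\Lambda_n(\omega)}P_\omega$. Suppose the conclusion fails. Then for every $n$ and for $\mu$-almost every $\omega\in\Omega$ we have $\ran(p_{\Lambda_n(\omega)}P_\omega)\cap\ell^2(\Lambda_{n,R}(\omega))=\{0\}$. Write $U_n(\omega):=\ran(p_{\Lambda_n(\omega)}P_\omega)\subset\ell^2(\Lambda_n(\omega))$; the negated hypothesis says that every nonzero vector in $U_n(\omega)$ has some mass in the boundary layer $\Lambda_n(\omega)\setminus\Lambda_{n,R}(\omega)=(\partial^R\Lambda_n)(\omega)$ (more precisely, the only element of $U_n(\omega)$ supported in $\Lambda_{n,R}(\omega)$ is $0$). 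Applying Lemma \ref{LA} with $U=U_n(\omega)$, $\Lambda=\Lambda_n$, $S=R$, and noting $U_{n,R}(\omega)=\{0\}$ by assumption, we get
\[
\dim U_n(\omega)=\dim U_n(\omega)-\dim U_{n,R}(\omega)\le \omega(\partial^R\Lambda_n).
\]

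First I would establish a lower bound for $\dim U_n(\omega)=\dim\ran(p_{\Lambda_n(\omega)}P_\omega)=\Tr\big(p_{\Lambda_n(\omega)}P_\omega p_{\Lambda_n(\omega)}\big)$ in terms of $\tau(P)$ after integrating and normalizing by $|I_n|$. Since $P\in\mathcal N$ is a projection with $P\ge 0$, and since $p_{\Lambda_n(\omega)}P_\omega p_{\Lambda_n(\omega)}\ge p_{\Lambda_n(\omega)}P_\omega p_{\Lambda_{n,r}(\omega)}P_\omega p_{\Lambda_n(\omega)}$ is not quite immediate, a cleaner route is: use the obvious inequality $\Tr(p_{\Lambda_n(\omega)}P_\omega)\ge \Tr(\chi_{\Lambda_{n}(\omega)}P_\omega\chi_{\Lambda_{n}(\omega)})\ge \Tr(\chi_{I_n F'}P_\omega)-(\text{boundary term})$, more precisely replace $\Lambda_n=I_nF$ by the subset $I_nF'$ and by $(I_n)_q F$ at the cost of $\omega(\partial^\bullet\Lambda_n)$ using Proposition \ref{comparable}, so that
\[
\liminf_{n\to\infty}\frac{1}{|I_n|}\int_\Omega \Tr\big(p_{\Lambda_n(\omega)}P_\omega\big)\,d\mu(\omega)\ \ge\ \dens(m)\,\tau(P),
\]
by Corollary \ref{ArbitraryI} applied to $I=(I_n)_q$ together with Proposition \ref{boundary} to kill the boundary corrections and the F{\o}lner property to replace $|(I_n)_q|$ by $|I_n|$ in the limit. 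On the other hand, integrating the contradiction-inequality and dividing by $|I_n|$ gives
\[
\frac{1}{|I_n|}\int_\Omega\dim U_n(\omega)\,d\mu(\omega)\ \le\ \frac{1}{|I_n|}\int_\Omega\omega(\partial^R\Lambda_n)\,d\mu(\omega)\ \longrightarrow\ 0
\]
by Proposition \ref{boundary} and dominated convergence (the integrand is dominated by $\omega(\Lambda_n)\le C|I_n^\rho|$ via Proposition \ref{UniversalBound}, so after dividing by $|I_n|$ one has an integrable majorant using the tempered F{\o}lner bound; alternatively one applies Fatou). Since $\dens(m)\tau(P)>0$, these two estimates are incompatible, so the assumption was false: there is some $N$ for which $\ran(p_{\Lambda_N(\omega)}P_\omega)\cap\ell^2(\Lambda_{N,R}(\omega))\ne\{0\}$ holds on a set of $\omega$ of positive measure. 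Taking $\widetilde\Omega$ to be that set finishes the argument.

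The main obstacle I anticipate is the interchange of limits and integrals in the boundary-term estimate, i.e. justifying $\frac{1}{|I_n|}\int_\Omega\omega(\partial^R\Lambda_n)\,d\mu(\omega)\to 0$ and the analogous $\frac{1}{|I_n|}\int_\Omega\Tr(\chi_{I_nF'}P_\omega)\,d\mu(\omega)\to\dens(m)\tau(P)$ uniformly enough. Pointwise convergence $\frac{\omega(\partial^R\Lambda_n)}{|I_n|}\to 0$ is exactly Proposition \ref{boundary}, so the real work is producing the integrable dominating function: Proposition \ref{UniversalBound} gives $\omega(\partial^R\Lambda_n)\le C|(\partial^{R+\rho}I_n)|$, and the F{\o}lner property does \emph{not} by itself bound $|(\partial^{R+\rho}I_n)|/|I_n|$ uniformly in $n$ — but it converges to $0$, and that is all dominated convergence needs once we observe the integrand is a deterministic function of $n$ times nothing random beyond $\omega\mapsto\omega(\cdot)$, whose $\mu$-integral is $m(\cdot)$. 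In fact $\int_\Omega\omega(\partial^R\Lambda_n)\,d\mu(\omega)=m(\partial^R\Lambda_n)\le \dens(m)\,|\partial^{q}I_n|$ by the density formula and Proposition \ref{comparable}, which tends to $0$ after dividing by $|I_n|$ by the F{\o}lner property with no interchange-of-limits subtlety at all — so this obstacle dissolves once one works with $m$ rather than fibrewise. Similarly $\int_\Omega\Tr(\chi_{I_nF'}P_\omega)\,d\mu(\omega)=|I_n|\,\dens(m)\,\tau(P)$ exactly, by Corollary \ref{ArbitraryI}. Thus the argument is really just: $|I_n|\dens(m)\tau(P)=\int\Tr(\chi_{I_nF'}P_\omega)\le \int\Tr(p_{\Lambda_n(\omega)}P_\omega)+ (\text{boundary})=\int\dim U_n(\omega)+(\text{boundary})\le \int\omega(\partial^R\Lambda_n)+(\text{boundary})=o(|I_n|)$, a contradiction.
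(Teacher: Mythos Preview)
Your approach is correct and essentially the same as the paper's: both hinge on comparing $\dim U_n(\omega)$ against $\omega(\partial^R\Lambda_n)$ via Lemma~\ref{LA}, feeding in Corollary~\ref{ArbitraryI} for the trace side and Proposition~\ref{boundary} (together with Proposition~\ref{comparable}) for the boundary side. The paper argues directly --- it fixes $N$ large enough that $\omega(\partial^R\Lambda_N)\le\frac{\delta}{2}|I_N|$ uniformly in $\omega$, then extracts $\widetilde\Omega$ as the set where $\Tr(\chi_{\Lambda_N}P_\omega)\ge\delta|I_N|$ --- whereas you argue by contradiction and integrate over $\omega$. These are really contrapositives of one another.

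There is one slip to fix. You write $\dim U_n(\omega)=\Tr(p_{\Lambda_n(\omega)}P_\omega p_{\Lambda_n(\omega)})$ and later ``$\int\Tr(p_{\Lambda_n(\omega)}P_\omega)+(\text{boundary})=\int\dim U_n(\omega)+(\text{boundary})$''; neither equality holds in general. What is true is the \emph{inequality}
\[
\dim U_n(\omega)=\dim\ran(p_{\Lambda_n}P_\omega)\ \ge\ \mathrm{rank}(p_{\Lambda_n}P_\omega i_{\Lambda_n})\ \ge\ \Tr(p_{\Lambda_n}P_\omega i_{\Lambda_n})=\Tr(\chi_{\Lambda_n}P_\omega),
\]
and the second inequality requires $\|P_\omega\|\le 1$ (so that the nonnegative finite-rank operator $p_{\Lambda_n}P_\omega i_{\Lambda_n}$ has trace bounded by its rank). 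The lemma does \emph{not} assume $P$ is a projection, only $P\ge 0$ and bounded; you must first rescale so that $\|P\|\le 1$, exactly as the paper does. With that inequality in place of your equalities (and noting $I_nF'\subset\Lambda_n$, so no boundary correction is needed on the trace side), your chain
\[
|I_n|\,\dens(m)\,\tau(P)=\int_\Omega\Tr(\chi_{I_nF'}P_\omega)\,d\mu\ \le\ \int_\Omega\dim U_n(\omega)\,d\mu\ \le\ m(\partial^R\Lambda_n)\ \le\ \dens(m)\,|\partial^q I_n|=o(|I_n|)
\]
goes through cleanly and gives the contradiction.
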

\begin{proof} Without loss of generality we can assume that the density $\dens(m)$ equals $1$.
  As $P$ be belongs to $\mathcal{N}$, the function $\omega \mapsto
  \|P_\omega\|$ is essentially bounded. We can assume without loss of
  generality that this constant is equal to $1$.  We set $\delta
  :=\tau (P) >0$.
By Proposition \ref{boundary}, there exists $N\in \NN$ with
  $$\omega(\partial^R \Lambda_N) \leq \frac{\delta}{2} |I_N|$$
  for   all $\omega \in \Omega$.  As by Corollary
  \ref{ArbitraryI},
$$\delta = \tau (P) = \int \frac{1}{|I_N|} \Tr (\chi_{\Lambda_N} P_\omega) d\mu
(\omega),$$
there exists a set $\widetilde{\Omega}$ of positive measure with
\[
\frac{1}{|I_N|} \Tr (\chi_{\Lambda_N} P_\omega) \geq \delta
\]
for all $\omega \in \widetilde{\Omega}$. This gives
$$
\dim ( \ran  (p_{\Lambda_N (\omega)} P_\omega )) \geq \Tr
(p_{\Lambda_N (\omega)} P i_{\Lambda_N (\omega) })\geq \delta |I_N|$$
for all $\omega\in\widetilde{\Omega}$.  The statement follows from Lemma
\ref{LA} with $U = \ran  (p_{\Lambda_N (\omega)} P_\omega )$, since then
$U_R = \ran (p_{\Lambda_N (\omega)} P_\omega) \cap \ell^2 (\Lambda_{N,R} (\omega))$.
\end{proof}

\begin{lem} Let $H$ satisfy (A). Let $\lambda\in \RR$ be arbitrary.  Set
  $P:=E_H (\{\lambda\})$ and denote by $ P_{comp} $  the projection on the closure of the
  linear hull of compactly supported eigenfunctions to $\lambda$. Then, the
  following holds:
\begin{itemize}
\item[(a)]  $P = P_{comp}$.
\item[(b)] If $P_{comp}\neq 0$, then $\tau (P) >0$.
\end{itemize}
\end{lem}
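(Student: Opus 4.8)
The plan is to prove (b) first, since it is almost immediate, and then use the machinery of the preceding lemma to establish the inclusion $P \le P_{comp}$ in (a); the reverse inclusion $P_{comp} \le P$ is trivial because every compactly supported eigenfunction to $\lambda$ lies in $\ran E_H(\{\lambda\})$ fiberwise, and this property passes to the closed linear hull.

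For (b): suppose $P_{comp} \neq 0$. Then there is a set of positive measure $\Omega_0 \subset \Omega$ on which $P_{comp,\omega} \neq 0$, hence on which $\Tr(P_{comp,\omega}) \ge 1$ since $P_{comp,\omega}$ is a nonzero projection on $\ell^2(X(\omega))$ with integer trace. Choosing $u = |B_r|^{-1}\chi_{B_r F'}$ as an admissible averaging function and letting $r \to \infty$ so that $B_r F'$ exhausts $X$, monotone convergence gives $\int_\Omega \Tr(\chi_{B_r F'} P_{comp,\omega})\, d\mu(\omega) \to \int_\Omega \Tr(P_{comp,\omega})\, d\mu(\omega) \ge \mu(\Omega_0) > 0$, so for $r$ large the integral is positive and therefore $\tau(P_{comp}) > 0$. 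Of course, to even speak of $\tau(P_{comp})$ one must know $P_{comp} \in \mathcal{N}$; this is exactly what part (a) delivers (it equals $E_H(\{\lambda\})$, which is in $\mathcal{N}$ by the spectral calculus remark in Section 4), so strictly (b) should be read as a consequence of (a). Alternatively, one argues directly that $\tau(P) > 0$ once one knows $P = P_{comp} \neq 0$, again by faithfulness of $\tau$ (Theorem \ref{tracetheorem}).

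The heart of the matter is the inclusion $P \le P_{comp}$ in (a). Set $Q := P - P_{comp} \ge 0$; this is the projection onto the part of $\ran E_H(\{\lambda\})$ orthogonal (fiberwise) to all compactly supported $\lambda$-eigenfunctions. Note $Q \in \mathcal{N}$: equivariance of $E_H(\{\lambda\})$ is standard, and equivariance of $P_{comp}$ follows because the unitaries $T_\gamma$ map compactly supported solutions of $(H_\omega - \lambda)u = 0$ to compactly supported solutions of $(H_{\gamma\omega} - \lambda)u = 0$ (finite hopping range ensures $H$ intertwines with $T_\gamma$, and "compact support" is preserved). I want to show $Q = 0$, and by faithfulness of $\tau$ it suffices to show $\tau(Q) = 0$, i.e. to derive a contradiction from $\tau(Q) > 0$. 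So assume $\tau(Q) = \delta > 0$. Apply the preceding Lemma (the one with $\ran(p_{\Lambda_N(\omega)}P_\omega) \cap \ell^2(\Lambda_{N,R}(\omega)) \neq \{0\}$) to $Q$ in place of $P$, with $R$ chosen larger than the hopping range of $H$: we obtain $N$, a positive-measure set $\widetilde\Omega$, and for each $\omega \in \widetilde\Omega$ a nonzero vector $v \in \ran(p_{\Lambda_N(\omega)} Q_\omega)$ supported inside $\Lambda_{N,R}(\omega)$.

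The final step is to show such a $v$ is actually a compactly supported $\lambda$-eigenfunction of $H_\omega$, contradicting $v \perp \ran P_{comp,\omega}$ (note $\ran Q_\omega \perp \ran P_{comp,\omega}$, so $p_{\Lambda_N(\omega)}Q_\omega$ maps into a subspace orthogonal to... — one must be slightly careful here: $v = p_{\Lambda_N(\omega)} Q_\omega w$ for some $w$, so $v$ need not itself lie in $\ran Q_\omega$). The right move: write $w' := Q_\omega w \in \ran Q_\omega \subset \ran E_{H_\omega}(\{\lambda\})$, so $H_\omega w' = \lambda w'$, and $v = p_{\Lambda_N(\omega)} w'$ is the restriction of $w'$ to $\Lambda_N(\omega)$, which vanishes outside $\Lambda_{N,R}(\omega)$. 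Since $H_\omega$ has hopping range $\le R$ and $v$ is supported in the "$R$-interior" $\Lambda_{N,R}(\omega)$, for any $x \in \Lambda_{N,R}(\omega)$ the ball of radius $R$ around $x$ is contained in $\Lambda_N(\omega)$, hence $(H_\omega v)(x) = \sum_{y} H_\omega(x,y)v(y) = \sum_{y \in \Lambda_N(\omega)} H_\omega(x,y) w'(y) = (H_\omega w')(x) = \lambda w'(x) = \lambda v(x)$; and for $x \notin \Lambda_{N,R}(\omega)$ both $(H_\omega v)(x)$ — wait, this need not vanish. The cleaner statement: $v$ is a compactly supported solution of $(H_\omega - \lambda)v = 0$ on $\Lambda_{N,R}(\omega)$, and extending by zero one checks using $d(x,y) < R \Rightarrow$ both in $\Lambda_N(\omega)$ that in fact $(H_\omega v - \lambda v)(x) = 0$ for $x \in \Lambda_{N,2R}$-type interior; the standard fix is to take $R$ equal to twice the hopping range so that the resulting $v$ is genuinely a global $\ell^2$ solution. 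Either way, $v$ is a nonzero compactly supported $\lambda$-eigenfunction, so $v \in \ran P_{comp,\omega}$; but $v = p_{\Lambda_N(\omega)} w'$ with $w' \perp \ran P_{comp,\omega}$, and since $v$ is supported in $\Lambda_N(\omega)$ we get $\langle v, v\rangle = \langle v, w'\rangle = 0$, contradicting $v \neq 0$. Hence $\tau(Q) = 0$, so $Q = 0$ and $P = P_{comp}$.

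I expect the main obstacle to be precisely the bookkeeping in this last step: getting the right relationship between the hopping range $R$, the choice of $R$ in the cited Lemma, and the interiors $\Lambda_{N,R}$, so that the restricted vector $v$ is exactly (not just approximately) a solution of the eigenvalue equation after extension by zero; and cleanly handling the fact that $v \in \ran(p_{\Lambda_N(\omega)}Q_\omega)$ rather than $\ran Q_\omega$, via the identity $\langle v,v\rangle = \langle p_{\Lambda_N(\omega)}w', w'\rangle = \langle v, w'\rangle$ which uses that $v$ is supported in $\Lambda_N(\omega)$. The rest — equivariance of $P_{comp}$, membership in $\mathcal{N}$, the trace and faithfulness arguments — is routine given the results already established in Sections 3 and 4.
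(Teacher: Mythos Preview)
Your argument is correct and follows the same route as the paper: assume $Q = P - P_{comp}$ has positive trace, apply the preceding lemma with $R$ equal to twice the hopping range to produce a nonzero $v \in \ran(p_{\Lambda_N(\omega)}Q_\omega)$ supported in $\Lambda_{N,R}(\omega)$, observe that such a $v$ is a genuine compactly supported eigenfunction, and derive a contradiction; part (b) is then immediate from faithfulness of $\tau$. The paper's write-up is terser (it simply asserts that one obtains ``compactly supported eigenfunctions in the range of $Q_\omega$''), whereas you correctly spell out both the $R$-bookkeeping and the orthogonality identity $\langle v,v\rangle = \langle v,w'\rangle = 0$ that closes the contradiction.
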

\begin{proof}
  (a) If $P=0$ the statement is clear. We therefore only consider the
  case $P\neq 0$, i.e. $\tau (P)>0$.  Assume $P\neq P_{comp}$. Then,
  $Q:=P - P_{comp}$ is a projection with $Q\neq 0$.  Hence $\tau (Q)
  >0$ as $\tau$ is faithful.  Set $R$ to be twice the hopping range of
  $H$. By the previous
  lemma, there exist $N\in \NN$ and a set $\widetilde{\Omega}$ of
  positive measure in $\Omega$ such that for each $\omega\in \widetilde{\Omega}$
$$\mbox{ran} (p_{\Lambda_{N} (\omega)} Q_\omega) \cap \ell^2
(\Lambda_{N,R} (\omega)\neq \{0\}.$$
By definition of the hopping
range this gives compactly supported eigenfunctions in the range of
$Q_\omega$ for all $\omega\in \widetilde{\Omega}$. This is a
contradiction.

 (b) This follows as $\tau$ is faithful.
\end{proof}

\begin{proof}[Proof of Theorem \ref{main2}]
This is a direct consequence of the previous lemma.
\end{proof}

\begin{proof}[Proof of Corollary \ref{Korollar}]
As in Theorem \ref{main2}, let $P_{comp,\omega}$ be the projection onto the space of compactly supported solutions of $(H_\omega -\lambda)u=0$ and $P_{comp}=(P_{comp,\omega})$.
Let $\Omega'$ be the set of $\omega\in \Omega $ with $P_{comp,\omega}\neq 0$.
Then, $\Omega'$ is invariant and measurable, hence by ergodicity it has measure $0$ or $1$.

Now,  by Theorem \ref{main2} and the faithfulness of $\tau$, the function  $N_H$ is discontinuous at $\lambda$ (i.e. (i) holds)  if and only if $P_{comp}\neq 0$.  By definition,  $P_{comp} \neq 0$ if and only if $\Omega'$ has positive measure (i.e. (ii) holds). As  $\Omega'$ has either measure $0$ or meassure $1$, it has positive measure if and only if it has measure $1$ (i.e. (iii) holds).
\end{proof}

\section{Some deterministic approximation  results}
In this section we present three deterministic results. The first two results give estimates for the difference
\[
 \chi_\Lambda \phi (H_\omega) - \phi (p_{\Lambda (\omega)}    H_\omega  \, i_{\Lambda (\omega)} )
\]
for suitable functions $\phi$ on $\RR$ and $\Lambda \subset X$.  In one
way or other this type of result is entering all considerations on
convergence of the IDS (see the first list of references in the Introduction).
On the technical level our arguments are related to \cite{Eckmann-99,Elek-03,MathaiSY-03}.

The last result gives a lemma from measure theory on convergence of
distribution functions for measures on $\RR$. This type of lemma seems
to have first been used in the present context in \cite{Elek-b}.  There,
one can also find a proof. For the convenience of the reader we
include an alternative proof (and actually give a slightly
strengthened result).

\begin{lem}\label{vague}
Let $(I_n)$ be a {F\o lner} sequence in $\Gamma$ and $\Lambda_n:= I_n F'$ as before.
Let $H$ satisfy (A). Let $\phi$ be a continuous function on $\RR$.  Then,  for any $\omega\in \mathcal{D}$
\[
 \frac{|\Tr (\chi_{\Lambda_n} \phi (H_\omega))-\nu_{\omega,n} (\phi)   |   }{|I_n|} \longrightarrow 0
\]
for $n\to \infty$.
\end{lem}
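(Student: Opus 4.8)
The plan is to compare the two operators $\chi_{\Lambda_n}\phi(H_\omega)$ and $\phi(H_{\omega,n})$ directly, reduce to polynomials, and then exploit the finite hopping range. First I would observe that both $\Tr(\chi_{\Lambda_n}\phi(H_\omega))$ and $\nu_{\omega,n}(\phi) = \Tr\phi(H_{\omega,n})$ are traces over the finite-dimensional space $\ell^2(\Lambda_n(\omega))$ — the first because $H_\omega(x,y)$ is only sampled with $x\in\Lambda_n(\omega)$, and in fact $\chi_{\Lambda_n}\phi(H_\omega)\chi_{\Lambda_n}$ has the same trace as $\chi_{\Lambda_n}\phi(H_\omega)$. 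So the quantity in question is $|\Tr_{\ell^2(\Lambda_n(\omega))}(p_{\Lambda_n}\phi(H_\omega)i_{\Lambda_n} - \phi(H_{\omega,n}))|$. Since $\|H_\omega\|\le C$ uniformly, the spectra of $H_\omega$ and of all $H_{\omega,n}$ lie in $[-C,C]$, so by Weierstrass it suffices to prove the claim for $\phi$ a polynomial, and by linearity for $\phi(t)=t^k$; the general continuous case follows because a uniform polynomial approximation on $[-C,C]$ transfers to a uniform operator-norm approximation of both $\phi(H_\omega)$ restricted to $\Lambda_n$ and $\phi(H_{\omega,n})$, and the traces are controlled by $\dim\ell^2(\Lambda_n(\omega)) = \omega(\Lambda_n)$, which by Proposition~\ref{UniversalBound} is $O(|I_n|)$, so the error stays $o(1)$ after dividing by $|I_n|$.

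Next, for $\phi(t)=t^k$, I would expand
\[
 p_{\Lambda_n} H_\omega^k i_{\Lambda_n} - (p_{\Lambda_n}H_\omega i_{\Lambda_n})^k
\]
as a telescoping sum and note that each term involves inserting $i_{\Lambda_n}p_{\Lambda_n} - \Id = -\,q_{\Lambda_n}$, the projection onto $\ell^2(X(\omega)\setminus\Lambda_n(\omega))$, somewhere between two factors of $H_\omega$. The key point is locality: $H_\omega$ has hopping range $R_0<\infty$, so a matrix element $H_\omega^j(x,y)$ vanishes unless $d(x,y)< jR_0$. Therefore, in any one of these telescoped terms, for the diagonal entry at a site $x\in\Lambda_{n,\,kR_0}(\omega)$ (i.e.\ $x$ at distance $>kR_0$ from the complement of $\Lambda_n$) to be nonzero, the path $x\to\cdots\to x$ of total length $k$ must leave $\Lambda_n$ and come back, which is impossible. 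Hence the diagonal of the difference operator is supported on $\Lambda_n\setminus\Lambda_{n,kR_0} \subset \partial^{kR_0}\Lambda_n$, and each diagonal entry is bounded by a constant $c_k$ depending only on $k$ and $C$ (namely $\le 2k\,C^{k}$ or so). Consequently
\[
 |\Tr(\chi_{\Lambda_n}\phi(H_\omega)) - \nu_{\omega,n}(\phi)| \le c_k\,\omega(\partial^{kR_0}\Lambda_n).
\]

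Finally I would divide by $|I_n|$ and invoke Proposition~\ref{boundary}: since $(\Lambda_n)$ arises from a {F\o lner} sequence, $\omega(\partial^{kR_0}\Lambda_n)/|I_n|\to 0$, which gives the claim for monomials, hence for polynomials, hence — via the uniform approximation argument of the first paragraph together with the $O(|I_n|)$ bound on $\omega(\Lambda_n)$ — for all continuous $\phi$. The main obstacle, and the step requiring the most care, is the bookkeeping in the telescoping/locality argument: one must track exactly which sites can have nonzero diagonal contributions after inserting the ``boundary'' projection, and check that the relevant set is contained in a fixed metric collar $\partial^{kR_0}\Lambda_n$ of the boundary whose $\omega$-measure is controlled by Proposition~\ref{boundary}; getting the constant $c_k$ and the collar width right (so that it is independent of $n$ and of $\omega$) is where the real work lies.
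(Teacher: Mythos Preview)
Your proposal is correct and follows essentially the same route as the paper: reduce to monomials $\phi(t)=t^k$, use the finite hopping range to show that the diagonal of $\chi_{\Lambda_n}H_\omega^k - H_{\omega,n}^k$ is supported in the collar $\partial^{kR}\Lambda_n$, bound it there by a constant independent of $n$, apply Proposition~\ref{boundary}, and then pass to general continuous $\phi$ via Stone--Weierstrass using $\omega(\Lambda_n)=O(|I_n|)$. The only cosmetic difference is that the paper writes out the monomial case as an explicit sum over closed paths (getting the bound $\omega(\partial^{kR}\Lambda_n)(M_{kR})^k\|H\|^k$), whereas you phrase it as a telescoping identity with inserted boundary projections; these are two presentations of the same combinatorics, and your operator-norm bound $2C^k$ on the diagonal entries is in fact cleaner than the path-counting constant.
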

\begin{proof}
First we prove that the statement holds if $\phi(x) = x^k$ for some $k \in \NN$.
Note that $\nu_{\omega,n} (\phi) = \Tr(H_{\omega,n}^k)$ is a sum over closed paths of lenght $k$,
more precisely
\[
 \sum _{x \in \Lambda_n(\omega)} \
\sum_{x_1, \dots, x_{k+1} \in \Lambda_n(\omega)} H_{\omega,n}(x_1, x_2) \dots H_{\omega,n}(x_k, x_{k+1})
\]
where in the second sum $x_1 =x= x_{k+1}$. Since $\Tr (\chi_{\Lambda_n} H_\omega^k) $ can be written
in  a similar way, the difference $\Tr (\chi_{\Lambda_n} H_\omega^k)  -\Tr(H_{\omega,n}^k)$ is bounded in modulus by
\begin{multline*}
\sum _{x \in \partial^{k R} \Lambda_n(\omega)}  \
\sum_{x_1, \dots, x_{k+1} \in X(\omega)} |H_{\omega,n}(x_1, x_2)| \dots |H_{\omega,n}(x_k, x_{k+1})|
\\
\le
\sum _{x \in \partial^{k R} \Lambda_n(\omega)} \omega(B_{kR})^k \|H\|^k
\le
\omega(\partial^{k R} \Lambda_n) (M_{kR})^k \|H\|^k.
\end{multline*}
Here $R$ denotes the finite hopping range of the operator $H_\omega$.
Now one uses Proposition \ref{boundary} and the fact that $I_n, n \in \NN$ is a {F\o lner} sequence
to conclude that
\[
 \lim_{n\to\infty}\frac{\omega(\partial^{k R} \Lambda_n) }{|I_n|} (M_{kR})^k \|H\|^k=0
\]
Now the following considerations extend the convergence result to all $\phi \in C(\RR)$:

Note that the only relevant data of the function $\phi$ are
  its values on the spectrum of $H_\omega$, which is a compact set as
  $H_\omega$ is bounded. Let $\mathcal{S}$ be the family of functions for
  which the statement of the lemma holds. We just proved that all polynomials belong to $\mathcal{S}$.
  Moreover, the set $\mathcal{S}$ is obviously closed under uniform
  convergence. The statement follows from Stone-Weierstrass theorem.
\end{proof}

\begin{lem}\label{discontinuity}
Let $(I_n)$ be a {F\o lner} sequence in $\Gamma$ and $\lambda\in \RR$ arbitrary.
Then, for any $\omega\in \tcD$
\[
 \frac{|\Tr(\chi_{\Lambda_n}E_\omega(\{\lambda\})) -\nu_{\omega,n} (\{\lambda\})  |   }{|I_n|} \longrightarrow 0
 \]
for $n\to \infty$.
\end{lem}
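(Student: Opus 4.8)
\medskip

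The plan is to prove the quantitative estimate
\[
 \bigl|\,\Tr(\chi_{\Lambda_n}E_\omega(\{\lambda\})) - \nu_{\omega,n}(\{\lambda\})\,\bigr|\ \le\ \omega(\partial^{2R}\Lambda_n)
\]
for every $n$, where $R<\infty$ is the hopping range of $H$; the assertion then follows at once from Proposition \ref{boundary}. In contrast to Lemma \ref{vague} this will be a purely deterministic, finite-dimensional linear-algebra argument, and Lemma \ref{vague} is not used. The underlying mechanism is the one already exploited in the proof of Theorem \ref{main2}: functions living well inside $\Lambda_n$ cannot distinguish $H_\omega$ from its compression, so the $\lambda$-eigenspaces of the two operators agree up to a boundary correction controlled by Lemma \ref{LA}.

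Fix $n$ and abbreviate $\Lambda:=\Lambda_n$, $G:=H_{\omega,n}=p_{\Lambda(\omega)}H_\omega\, i_{\Lambda(\omega)}$, and let $P:=E_\omega(\{\lambda\})$ be the orthogonal projection of $\ell^2(X(\omega))$ onto $\ran P=\ker(H_\omega-\lambda)$. Since $\Lambda(\omega)$ is finite, both $\nu_{\omega,n}(\{\lambda\})=\dim\ker(G-\lambda)$ and $\Tr(\chi_\Lambda E_\omega(\{\lambda\}))=\Tr(p_{\Lambda(\omega)}P\,i_{\Lambda(\omega)})$ are finite. The object linking them is the subspace
\[
 W\ :=\ \ran P\cap\ell^2(\Lambda(\omega))\ =\ \bigl\{\,u\in\ell^2(\Lambda(\omega)) : H_\omega u=\lambda u\,\bigr\}
\]
of genuine $\lambda$-eigenfunctions of $H_\omega$ that happen to be supported inside $\Lambda$. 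The technical core is a localisation statement, to be verified by a direct computation using the finite hopping range (exactly as in the proof of Theorem \ref{main2}): any $f$ supported in the $2R$-interior $\Lambda_{n,2R}$ which arises either as a $\lambda$-eigenfunction of $G$ or as $\chi_\Lambda h$ for some $h\in\ker(H_\omega-\lambda)$ is automatically a genuine $\lambda$-eigenfunction of $H_\omega$; conversely, any $u\in W$ satisfies $Gu=\lambda u$. Proving this — keeping careful track of which of $\Lambda_n$, $\Lambda_{n,R}$, $\Lambda_{n,2R}$ each point and each matrix element $H_\omega(x,y)$ lives over, so that all terms crossing $\partial^{2R}\Lambda_n$ are killed by the hopping range — is the only non-soft step and the one I expect to be the main obstacle (not because it is deep, but because it demands disciplined bookkeeping of the several interior layers of $\Lambda_n$).

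Granting the localisation statement, the bound on $\nu_{\omega,n}(\{\lambda\})$ runs as follows. For $u\in W$ one has $Gu=p_{\Lambda(\omega)}H_\omega u=\lambda u$, so $W\subseteq\ker(G-\lambda)$ and $\dim W\le\nu_{\omega,n}(\{\lambda\})$. Conversely, apply Lemma \ref{LA} to $V:=\ker(G-\lambda)\subseteq\ell^2(\Lambda(\omega))$ with $S=2R$: its subspace $V_{2R}$ of elements supported in $\Lambda_{n,2R}$ obeys $\dim V-\dim V_{2R}\le\omega(\partial^{2R}\Lambda_n)$, while by localisation $V_{2R}\subseteq W$; hence $\nu_{\omega,n}(\{\lambda\})\le\dim W+\omega(\partial^{2R}\Lambda_n)$. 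The bound on $\Tr(\chi_\Lambda E_\omega(\{\lambda\}))$ is parallel. Each $u\in W$ is fixed by the positive contraction $p_{\Lambda(\omega)}P\,i_{\Lambda(\omega)}$ (indeed $p_{\Lambda(\omega)}P\,i_{\Lambda(\omega)}u=p_{\Lambda(\omega)}Pu=p_{\Lambda(\omega)}u=u$, and $0\le p_{\Lambda(\omega)}P\,i_{\Lambda(\omega)}\le\Id$), so $W$ lies in its $1$-eigenspace and $\dim W\le\Tr(p_{\Lambda(\omega)}P\,i_{\Lambda(\omega)})$; on the other hand $\Tr(p_{\Lambda(\omega)}P\,i_{\Lambda(\omega)})\le\dim\ran\!\bigl(p_{\Lambda(\omega)}P\,i_{\Lambda(\omega)}\bigr)\le\dim U$ for $U:=p_{\Lambda(\omega)}(\ran P)\subseteq\ell^2(\Lambda(\omega))$, and Lemma \ref{LA} applied to this $U$ with $S=2R$, together with $U_{2R}\subseteq W$ (localisation, applied to the elements $\chi_\Lambda h$ of $U$ that are supported in $\Lambda_{n,2R}$), gives $\Tr(\chi_\Lambda E_\omega(\{\lambda\}))\le\dim W+\omega(\partial^{2R}\Lambda_n)$.

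Combining the four inequalities, both $\nu_{\omega,n}(\{\lambda\})$ and $\Tr(\chi_{\Lambda_n}E_\omega(\{\lambda\}))$ lie in the interval $[\dim W,\ \dim W+\omega(\partial^{2R}\Lambda_n)]$, so they differ by at most $\omega(\partial^{2R}\Lambda_n)$. Dividing by $|I_n|$ and invoking Proposition \ref{boundary} completes the proof.
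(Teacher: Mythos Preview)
Your proposal is correct and follows essentially the same route as the paper's proof: both arguments sandwich the two quantities $\nu_{\omega,n}(\{\lambda\})$ and $\Tr(\chi_{\Lambda_n}E_\omega(\{\lambda\}))$ around the dimension of a space of genuine compactly supported $\lambda$-eigenfunctions of $H_\omega$ by two applications of Lemma~\ref{LA} (to $\ker(H_{\omega,n}-\lambda)$ and to $\ran(\chi_{\Lambda_n}E_\omega(\{\lambda\}))$ respectively), and then invoke Proposition~\ref{boundary}. The only cosmetic differences are that the paper takes as intermediate space $V_n=\{v:\ H_\omega v=\lambda v,\ \supp v\subset\Lambda_{n,R}\}$ and works with boundary width $R$, whereas you take $W=\{v:\ H_\omega v=\lambda v,\ \supp v\subset\Lambda_n\}$ and width $2R$; your extra layer makes the localisation step for $U_{2R}\subset W$ entirely transparent.
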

\begin{proof} Let $R$ be the hopping range of $H$.  Fix $\omega\in \mathcal{D}$. Let $V_n$ be the subspace of all
  solutions of $(H_\omega - \lambda) v =0$, which vanish outside
  $\Lambda_{n,R}$. Let $D_n$ be the dimension of $V_n$.

We now apply  Lemma \ref{LA} to the space $U$ of all solutions of $(p_{\Lambda_n (\omega)} H_\omega i_{\Lambda_n (\omega)} - \lambda ) u =0$   and note that $U_R = V_n$ as the hopping range of $H_\omega$ is $R$. This gives
$$ 0 \le \nu_{\omega,n} (\{\lambda\}) - D_n \leq \omega(\partial^R (\Lambda_n)).$$
Moreover, obviously,
$$ D_n \leq \Tr (\chi_{\Lambda_n} E_\omega (\{\lambda\}) ) \leq
\dim(\ran (\chi_{\Lambda_n} E_\omega (\{\lambda\}) )).$$
We now apply  Lemma \ref{LA} to $U':= \ran \chi_{\Lambda_n} E_\omega (\{\lambda\})$ and note that $U'_R = V_n$ as the hopping range of $H_\omega$ is $R$. This gives
$$ 0 \le \Tr (\chi_{\Lambda_n} E_\omega (\{\lambda\})) - D_n \leq \omega(\partial^R (\Lambda_n)).$$
By Proposition \ref{comparable}, there exists $\rho>0$ with  $\partial^R
(\Lambda_n) \subset (\partial^\rho I_n) F'$ for all $n\in \NN$. The statement
follows now from the triangle inequality and Proposition \ref{boundary}.
\end{proof}

\begin{lem} \label{vagueandmore}
Let $\nu$ be a probability measure on
$\RR$.  Let $(\nu_n)$ be a sequence of bounded  measures on
$\RR$ which  satisfy
\begin{itemize}
\item $\nu_n$ converge weakly to the measure $\nu$,
\item $\nu_n (\{\lambda\}) \longrightarrow \nu (\{\lambda\})$ for all $ \lambda\in \RR$.
\end{itemize}
Then, the distribution functions $\lambda\mapsto \nu_n
((-\infty,\lambda])$ of the $\nu_n$ converge with respect to the
supremum norm to the distribution function $\lambda\mapsto \nu(
(-\infty,\lambda])$ of $\nu$.
\end{lem}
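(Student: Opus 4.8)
The plan is to convert weak convergence plus atom-wise convergence into uniform convergence of distribution functions by a standard compactness argument, together with care at the atoms. Write $F_n(\lambda) := \nu_n((-\infty,\lambda])$ and $F(\lambda) := \nu((-\infty,\lambda])$. First I would record the elementary facts: each $F_n$ and $F$ is nondecreasing, $F$ is the distribution function of a probability measure so it is bounded with $F(-\infty)=0$, $F(+\infty)=1$, and $F$ has at most countably many discontinuities, precisely the atoms of $\nu$. Weak convergence $\nu_n \to \nu$ gives $F_n(\lambda) \to F(\lambda)$ at every continuity point $\lambda$ of $F$ (and, using $\nu_n(\RR)\to\nu(\RR)=1$, also convergence of total masses), while the second hypothesis gives $F_n(\lambda)-F_n(\lambda^-) \to F(\lambda)-F(\lambda^-)$ for every $\lambda$.

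Next I would upgrade pointwise convergence on continuity points to pointwise convergence everywhere. For an atom $\lambda_0$ of $\nu$, pick continuity points $s < \lambda_0 < t$ with $s,t$ close to $\lambda_0$; then $F_n(\lambda_0) = F_n(s) + \nu_n((s,\lambda_0])$, and one controls $\nu_n((s,\lambda_0))$ via $F_n(\lambda_0^-) - F_n(s)$ together with the convergence $F_n(\lambda_0^-)\to F(\lambda_0^-)$, which itself follows by combining $F_n(\lambda_0)\to$ (to be shown) with the atom hypothesis — so instead I would argue directly: from $F_n(s) \to F(s)$, $F_n(t)\to F(t)$, monotonicity $F_n(s) \le F_n(\lambda_0^-) \le F_n(\lambda_0) \le F_n(t)$, and $F_n(\lambda_0) - F_n(\lambda_0^-) \to \nu(\{\lambda_0\})$, one pins down both $F_n(\lambda_0^-)$ and $F_n(\lambda_0)$ in the limit by letting $s\uparrow\lambda_0$, $t\downarrow\lambda_0$ through continuity points and using right/left continuity of $F$. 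This yields $F_n(\lambda)\to F(\lambda)$ for every $\lambda\in\RR$, including at $\pm\infty$ in the sense that $F_n(\lambda)\to 0$ as $\lambda\to-\infty$ and $F_n(\lambda)\to 1$ as $\lambda\to+\infty$, uniformly in $n$ once $n$ is large, because the tails of $\nu$ are small and $\nu_n(\RR)\to 1$.

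Finally I would run the classical Pólya-type argument for uniform convergence. Fix $\varepsilon>0$. Choose finitely many points $-\infty = \lambda_0 < \lambda_1 < \dots < \lambda_k = +\infty$ such that $F(\lambda_{j}) - F(\lambda_{j-1}^+) < \varepsilon$ on each subinterval where $F$ is "flat enough"; concretely, since $F$ is monotone with total variation $1$, one can select the $\lambda_j$ (taking them to be continuity points of $F$ wherever the oscillation between consecutive atoms exceeds $\varepsilon$, and isolating each of the finitely many atoms of mass $\ge \varepsilon$ as its own $\lambda_j$) so that on each interval $[\lambda_{j-1},\lambda_j]$ the increment $F(\lambda_j)-F(\lambda_{j-1})$ is less than $\varepsilon$ except possibly for a single atom that sits at an endpoint. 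By the pointwise (and one-sided pointwise) convergence already established, there is $N$ so that for $n\ge N$ we have $|F_n(\lambda_j)-F(\lambda_j)| < \varepsilon$ and $|F_n(\lambda_j^-)-F(\lambda_j^-)|<\varepsilon$ for all $j=0,\dots,k$. For arbitrary $\lambda$, locate $j$ with $\lambda_{j-1} \le \lambda \le \lambda_j$ and sandwich $F_n(\lambda)$ between $F_n(\lambda_{j-1})$ and $F_n(\lambda_j^-)$ (or $F_n(\lambda_j)$), and similarly $F(\lambda)$; the monotonicity bounds plus the small increments give $|F_n(\lambda)-F(\lambda)| \le C\varepsilon$ for a universal constant $C$ (say $3$). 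Hence $\|F_n - F\|_\infty \to 0$.

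The main obstacle is the bookkeeping at atoms: weak convergence alone only controls $F_n$ at continuity points, so the second hypothesis must be used precisely to keep the "jump mass" from leaking into a nearby flat region when we refine the partition. Everything else is a routine monotone-function sandwich. I would also note the slight strengthening over \cite{Elek-b}: we do not need the limiting measure to have distribution function of a fixed form, only that it be a probability measure, and the partition can be chosen using continuity points of $F$ together with the finitely many atoms of size $\ge\varepsilon$, which always exist by $\nu(\RR) = 1$.
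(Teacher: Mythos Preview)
Your proposal is correct and follows the same Pólya-type skeleton as the paper's proof (fix $\varepsilon$, choose a finite partition, control at the partition points, sandwich by monotonicity), but the mechanics differ. You first establish pointwise convergence of both $F_n(\lambda)$ and $F_n(\lambda^-)$ at \emph{every} $\lambda$ (combining weak convergence at continuity points with the atom hypothesis and monotone squeezing), and then run the sandwich directly with a partition chosen so that $\nu((\lambda_{j-1},\lambda_j))$ is small on each open subinterval. The paper instead decomposes $\nu=\nu_c+\nu_p$, partitions only according to the continuous part $\nu_c$ (condition~$(*)$), truncates the atom series to finitely many large atoms (condition~$(**)$), and uses continuous test functions $\phi_j$ squeezed between consecutive indicators so that weak convergence applies verbatim; the other inequality is obtained by reflecting $\nu$ about the origin. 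Your route is closer to the classical Pólya argument and arguably cleaner once the pointwise convergence of $F_n$ and $F_n(\cdot^-)$ is in hand; the paper's route avoids that intermediate step at the cost of juggling the $\nu_c/\nu_p$ split and the four conditions $(*)$--$(****)$. Two small points worth tightening in your write-up: (i) your second paragraph begins a circular argument before pivoting to the correct one---just give the direct squeeze $F(s)\le\liminf F_n(\lambda_0^-)$, $\limsup F_n(\lambda_0)\le F(t)$, $F_n(\lambda_0)-F_n(\lambda_0^-)\to\nu(\{\lambda_0\})$ from the start; (ii) the existence of a finite partition with $\nu((\lambda_{j-1},\lambda_j))<\varepsilon$ deserves one explicit sentence (iteratively choose $\lambda_j=\inf\{x>\lambda_{j-1}:\nu((\lambda_{j-1},x))\ge\varepsilon\}$ and note each step captures $\nu$-mass $\ge\varepsilon$ on the half-open interval, so the process terminates).
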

\begin{rem}
Note that vague convergence $\nu_n \to \nu$,  actually implies weak convergence,
if one of the two following conditions hold:
\begin{itemize}
 \item all $\nu_n$ are probability measures, or
 \item there exists a compact interval such that the supports of $\nu$ and of $\nu_n$ is contained
in this interval for all $n\in \NN$.
\end{itemize}

\end{rem}

\begin{proof}
Let $\epsilon >0$ be arbitrary.
Let $\nu_c$ denote the continuous part of $\nu$ and $\nu_p$ the point
part of $\nu$.  Choose
\[
-\infty = t_{-1}= t_0< t_1< \ldots < t_L < t_{L+1}  <  t_{L+2} = \infty
\]
such that
\begin{itemize}
\item[($*$)] $\nu_c ((t_j,t_{j+1}])\leq \epsilon$ for $j=1,\ldots L$,
$\nu (-\infty,t_1) \leq \epsilon$ and $\nu (t_{L+1},\infty) \leq \epsilon$.
\end{itemize}
Such a choice is possible since $\nu$ is a probability measure.
Let $(\lambda_k)$
be an enumeration of the points of discontinuity of $\nu$. Assume
without loss of generality that $k$ runs through all of $\NN$.  Choose
$N\in \NN$ with
\begin{itemize}
\item[($**$)]$\sum_{k=N+1}^\infty \nu (\{\lambda_k\})\leq
\epsilon$.
\end{itemize}
Choose continuous functions $\phi_j$
with  $\chi_{(-\infty,t_j]}\leq \phi_j\leq \chi_{(-\infty,
t_{j+1}]}$ for $j=0,\ldots, L$.  Set $\phi_{-1} \equiv 0$.
For all large enough $n$ we then have
\begin{itemize}
\item[($***$)] $|\nu_n (\phi_j) - \nu (\phi_j)|\leq \epsilon $ for $j=1,\ldots, L+1$,
\item[($****$)] $|\nu_n ( \chi_{\{\lambda_j\mid j=1,\ldots, N\}}) - \nu(\chi_{\{\lambda_j\mid j=1,\ldots, N\}} ) |\leq \epsilon$.
\end{itemize}
For such $n$ we prove now
\[
\nu ((-\infty,\lambda]) - \nu_n((-\infty,\lambda])\le 5 \epsilon
\]
for all $\lambda\in\RR$ as follows:
Choose $j\in0,\ldots, L+1$ with $t_{j}\leq \lambda < t_{j+1}$
and define $\psi$ by
$$\chi_{(-\infty,\lambda]} = \phi_{j-1} + \psi$$ be given.
Since $0\leq \psi \leq 1$ on $\RR$ and $\supp \psi \subset [t_{j-1},t_{j+1}]$ we then obtain
\begin{eqnarray*}
\nu_n ((-\infty,\lambda]) &= & \nu_n (\phi_{j-1}) + \nu_n
  (\psi)\\
(***)\;&\geq &\nu (\phi_{j-1}) - \epsilon + \nu_n (\psi)\\
&= & \nu((-\infty,\lambda])- \nu(\psi) - \epsilon + \nu_n (\psi)\\
(*)\;&\geq &  \nu((-\infty,\lambda]) - \nu_p (\psi) - 3 \epsilon + \nu_n
(\psi)\\
&=& \nu((-\infty,\lambda])- 3 \epsilon + \nu_n (\psi) - \nu_p (\psi)\\
(**)\:&\geq & \nu((-\infty,\lambda])- 4 \epsilon +   \nu_n
(\psi) -  \nu_p (\psi \chi_{\{\lambda_j\mid j=1,\ldots, N\}})\\
&\geq & \nu((-\infty,\lambda])- 4 \epsilon  + \nu_n
(\psi     \chi_{\{\lambda_j\mid j=1,\ldots, N\}} ) -  \nu_p(\psi \chi_{\{\lambda_j\mid j=1,\ldots, N\}})\\
(****) &\geq &  \nu((-\infty,\lambda]) - 5 \epsilon.
\end{eqnarray*}
Note that the above inequalities hold also if we replace $\nu((-\infty,\lambda]), \nu_n ((-\infty,\lambda]) $
by $\nu((-\infty,\lambda)), \nu_n ((-\infty,\lambda))$.
Thus we have proven
\[
\lim_{n\to\infty}\sup_{\lambda \in \RR} \nu ((-\infty,\lambda]) - \nu_n((-\infty,\lambda])\le 0
\]
To prove the opposite inequality we use the measure $\tilde \nu$ which is the reflection of $\nu$ around the origin.
It inherits all properties of $\nu$ which have been used in the previous calculation.
Then
\[
\tilde \nu ((-\infty, \lambda)) = \nu([-\lambda, \infty))=1- \nu((-\infty, -\lambda])
\]
The above argument yields
$\lim_{n\to\infty}\sup_{\lambda \in \RR} \tilde\nu ((-\infty,\lambda)) - \tilde\nu_n((-\infty,\lambda))\le 0$.
Since $\tilde\nu ((-\infty,\lambda)) - \tilde\nu_n((-\infty,\lambda))= \nu_n((-\infty, -\lambda])-\nu((-\infty, -\lambda])$,
the proof is completed.
\end{proof}

\section{Proof of Theorem \ref{main3}}
In this section, we prove Theorem \ref{main3}. Since here ergodicity plays a role, we
first discuss various consequences of ergodic theorems.

\medskip

For a function  $v$ on $\Omega$ and $I\subset \Gamma$
with $|I|>0$ we define the function  $v_I$ on $\Omega$ by
$$ v_I (\omega) :=\int_{I} v (\gamma^{-1}\omega) d\gamma.$$
For any $v$ integrable with respect to $\mu$ and any tempered {F\o lner} sequence $(I_n)$ there exists
a $\Gamma$-invariant $\bar v\in L^1(\mu)$ such that
\[
\lim_{n\to \infty} \frac{1}{|I_n|} v_{I_n} (\omega) = \bar v(\omega)
\]
for almost every $\omega\in \Omega$ and in $L^1$, see \cite{Lindenstrauss-01}.
Moreover, if $\mu$ is ergodic $\bar v (\omega)= \int v(\omega) d\mu(\omega)$
almost surely.

Our natural setting is not concerned with $v_I$ but rather with functions of
the form $\chi_{ I F'}$. We next show that these two types of functions are
comparable. To do so, we recall that we have fixed a point $p\in F'\subset X$.

\begin{prp} \label{uvsF}
Let $u$ be a measurable nonnegative function on $X$ satisfying
\eqref{e-admissible}.  Let $r>0$ such that the support of $u$ is
contained in the open ball $B_r(p)$ around $p$ with radius $r$. Then,
$$ | \chi_{ I F'}(x)  - u_I (x) |\leq
\chi_{\partial^r (I F')} (x)$$
for any $x\in X$.
\end{prp}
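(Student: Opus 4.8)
The plan is to verify the pointwise inequality separately on the three regions $(IF')_r$, $X\setminus (IF')^r$, and $\partial^r(IF')=(IF')^r\setminus (IF')_r$, which together exhaust $X$ (note $(IF')_r\subset (IF')^r$ since $r>0$: a point with $d(x,X\setminus IF')>r$ lies in $IF'$, hence has $d(x,IF')=0<r$). On the last region the right-hand side equals $1$, and there the bound is immediate: since $u\ge 0$ and $u$ satisfies \eqref{e-admissible}, one has $0\le u_I(x)=\int_I u(\gamma^{-1}x)\,d\gamma\le\int_\Gamma u(\gamma^{-1}x)\,d\gamma=1$, while $\chi_{IF'}(x)\in\{0,1\}$, so $|\chi_{IF'}(x)-u_I(x)|\le 1$ with no further work.

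It then remains to show $\chi_{IF'}(x)=u_I(x)$ on the other two regions. First I would record the two geometric facts used: because $F'$ is a fundamental domain, its translates $\{\gamma F'\}_{\gamma\in\Gamma}$ partition $X$, so $\gamma p\in\gamma F'$ (recall $p\in F'$) lies in $IF'$ when $\gamma\in I$ and in $X\setminus IF'$ when $\gamma\notin I$; and because $\Gamma$ acts by isometries, $d(\gamma^{-1}x,p)=d(x,\gamma p)$, so the hypothesis $\supp u\subset B_r(p)$ forces $u(\gamma^{-1}x)=0$ whenever $d(x,\gamma p)\ge r$. Now if $x\in(IF')_r$, then $x\in IF'$, so $\chi_{IF'}(x)=1$; for every $\gamma\notin I$ we have $\gamma p\in X\setminus IF'$, whence $d(x,\gamma p)\ge d(x,X\setminus IF')>r$ and $u(\gamma^{-1}x)=0$, so $u_I(x)=\int_\Gamma u(\gamma^{-1}x)\,d\gamma=1$ by \eqref{e-admissible}. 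Symmetrically, if $x\notin(IF')^r$, then $d(x,IF')\ge r$, so $x\notin IF'$ and $\chi_{IF'}(x)=0$; for every $\gamma\in I$ we have $\gamma p\in IF'$, whence $d(x,\gamma p)\ge r$ and $u(\gamma^{-1}x)=0$, so $u_I(x)=\int_I u(\gamma^{-1}x)\,d\gamma=0$. In both cases $\chi_{IF'}(x)=u_I(x)$, which completes the proof.

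This is essentially bookkeeping, and I do not expect a genuine obstacle. The only place that calls for some care is combining the partition property of the fundamental domain with the isometry of the action, so that ``$u$ is concentrated near $p$'' translates into ``$u_I(x)$ only detects those cells $\gamma F'$ whose base point $\gamma p$ lies within distance $r$ of $x$,'' and keeping the roles of $\gamma\in I$ versus $\gamma\notin I$ straight across the two cases.
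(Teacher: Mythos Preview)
Your proof is correct and follows essentially the same approach as the paper: the same three-region decomposition $(IF')_r$, $X\setminus(IF')^r$, $\partial^r(IF')$, and the same key observation that $u(\gamma^{-1}x)\neq 0$ forces $\gamma p\in B_r(x)$. You make the use of the isometry of the action and of the partition property of the fundamental domain somewhat more explicit than the paper does, but the logic is identical.
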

\begin{proof}
Consider first $x\in (I F')_r$: Then, $B_r (x) \subset I F'$. Any
$\gamma\in \Gamma$ with $u(\gamma^{-1} x) \neq 0$ satisfies
$d(\gamma^{-1} x, p) < r$, hence
$$ \gamma p  \in B_r (x) \subset I F'$$
and thus $\gamma \in I$.  For such $x$ we therefore obtain
$$ \int_I u (\gamma^{-1} x) d\gamma = \int u (\gamma^{-1} x) d\gamma =1.$$

Consider now $x\notin (I F')^r$: Then, $B_r (x) \subset X\setminus I
F'$.  Any $\gamma\in \Gamma$ with $u(\gamma^{-1} x) \neq 0$ satisfies
\[
\gamma p \in B_r (x) \subset X\setminus (I F')
\]
 and hence $\gamma \notin I$.  For such $x$ we therefore obtain
$$  \int_I u (\gamma^{-1} x) d\gamma =0.$$

Consider now $x\in (I F')^r \setminus (I F')_r$: As $u$ is nonnegative
with $\int u (\gamma^{-1} x) d\gamma =1$, we obtain $ 0 \leq u(x) \leq
1$ for such $u$.

Having considered these three cases, we can easily
obtain the statement.
\end{proof}

We now derive two consequences of the previous proposition and the ergodic
theorem.

\begin{prp}\label{erg}
  Let $(I_n)$  a tempered {F\o lner} sequence in $\Gamma$ and $T\in \mathcal{N}$ be
  arbitrary. Then, for almost every $\omega\in \Omega$,
\[
 \lim_{n\to\infty} \frac{1}{|I_n|} \Tr (\chi_{\Lambda_n}  T_\omega) = g(\omega).
\]
where $g \in L^1(\mu)$ is $\Gamma$-invariant. The convergence holds also in $L^1$ sense.
If $\mu$ is ergodic,
$g=\dens(m) \tau (T)$ almost surely.
\end{prp}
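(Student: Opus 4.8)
The plan is to reduce the trace over $\Lambda_n = I_n F'$ to the ergodic average $\frac{1}{|I_n|} v_{I_n}$ of a suitable function $v$ on $\Omega$, and then invoke the Lindenstrauss ergodic theorem quoted above. First I would fix a nonnegative $u$ satisfying \eqref{e-admissible} whose support lies in some ball $B_r(p)$; such a $u$ exists, e.g.\ $u = |B_\rho|^{-1}\chi_{B_\rho F'}$ for small enough $\rho$ by Proposition \ref{Huhu}. Define $v\colon\Omega\to\CC$ by $v(\omega) := \sum_{x\in X(\omega)} u(x)\, T_\omega(x,x)$. Since $u$ has compact support, only finitely many $x$ contribute (at most $M_S$ for $S$ the diameter of the support), and $|T_\omega(x,x)|\le \|T_\omega\|$ is essentially bounded, so $v\in L^1(\mu)$ (indeed $L^\infty$); one also checks $\int v\, d\mu = \dens(m)\,\tau(T)$ directly from the definition of $\tau$.

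The key computation is that $\frac{1}{|I_n|} v_{I_n}(\omega)$ is, up to a boundary error, equal to $\frac{1}{|I_n|}\Tr(\chi_{\Lambda_n} T_\omega)$. Unravelling the definitions and using covariance exactly as in the proof of Theorem \ref{tracetheorem}, one gets
\[
v_{I_n}(\omega) = \int_{I_n} \sum_{x\in X(\omega)} u(\gamma^{-1}x)\, T_\omega(x,x)\, d\gamma
= \sum_{x\in X(\omega)} u_{I_n}(x)\, T_\omega(x,x),
\]
where $u_{I_n}(x) = \int_{I_n} u(\gamma^{-1}x)\,d\gamma$ as in the paragraph before Proposition \ref{erg}. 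On the other hand $\Tr(\chi_{\Lambda_n} T_\omega) = \sum_{x\in X(\omega)} \chi_{\Lambda_n}(x)\, T_\omega(x,x)$. Hence by Proposition \ref{uvsF},
\[
\bigl|\Tr(\chi_{\Lambda_n} T_\omega) - v_{I_n}(\omega)\bigr|
\le \sum_{x\in X(\omega)} \bigl|\chi_{\Lambda_n}(x) - u_{I_n}(x)\bigr|\, |T_\omega(x,x)|
\le \|T\|\, \omega\bigl(\partial^r \Lambda_n\bigr).
\]
By Proposition \ref{boundary} the right-hand side divided by $|I_n|$ tends to $0$. Combined with the Lindenstrauss ergodic theorem applied to $v$, which gives $\frac{1}{|I_n|} v_{I_n}(\omega)\to \bar v(\omega)$ almost surely and in $L^1$ with $\bar v$ being $\Gamma$-invariant, we conclude that $\frac{1}{|I_n|}\Tr(\chi_{\Lambda_n} T_\omega)\to g(\omega):=\bar v(\omega)$ almost surely; the $L^1$ convergence follows since the difference is bounded by $\|T\||I_n|^{-1}\omega(\partial^r\Lambda_n)$ which tends to $0$ in $L^1$ by Proposition \ref{UniversalBound} (domination) and Proposition \ref{boundary}. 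In the ergodic case $\bar v = \int v\, d\mu = \dens(m)\,\tau(T)$ almost surely.

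The main obstacle is the $L^1$ part of the convergence of the boundary term: one must argue that $\omega\mapsto |I_n|^{-1}\omega(\partial^r\Lambda_n)$ not only tends to $0$ pointwise but is dominated, so that dominated convergence applies; Proposition \ref{UniversalBound} gives a uniform bound $\omega(\partial^r\Lambda_n)\le C|(\partial^{r+\rho}I_n)|$, but to get a clean domination one wants this comparable to $|I_n|$, which the F\o lner property supplies only asymptotically — so a little care (or passing through the already-established $L^1$ ergodic theorem plus a uniform integrability argument) is needed. Everything else is a routine rerun of the covariance manipulation from the proof of Theorem \ref{tracetheorem} together with the geometric estimates of Section \ref{Consequences}.
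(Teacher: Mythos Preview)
Your proposal is correct and follows essentially the same route as the paper: define $v(\omega)=\Tr(u\,T_\omega)$, identify $v_{I_n}(\omega)$ with $\sum_{x\in X(\omega)} u_{I_n}(x)T_\omega(x,x)$ via covariance, compare to $\Tr(\chi_{\Lambda_n}T_\omega)$ using Propositions \ref{uvsF} and \ref{boundary}, and invoke the Lindenstrauss ergodic theorem. Your flagged ``obstacle'' is in fact a non-issue: combining Propositions \ref{comparable} and \ref{UniversalBound} gives a bound $|I_n|^{-1}\omega(\partial^r\Lambda_n)\le C\,|I_n|^{-1}|\partial^{q+\rho}I_n|$ that is \emph{uniform in $\omega$} and tends to zero by the F\o lner property, so the boundary term vanishes uniformly in $\omega$ and hence trivially in $L^1(\mu)$---no domination or uniform integrability argument is needed.
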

\begin{proof}

We show that $\omega\mapsto  \frac{1}{|I_n|} \Tr (\chi_{\Lambda_n}  T_\omega)$
converges almost surely and in $L^1$ for $n\to \infty$.
By Proposition \ref{boundary} combined with Proposition \ref{uvsF}, it suffices to consider the sequence
\[
 \Tr (u_{I_n}  T_\omega)  = \sum_{x\in X(\omega)}   u_{I_n}(x) T_\omega (x,x)
\]
with $u$ satisfying \eqref{e-admissible}, instead of considering
\[
\Tr (\chi_{\Lambda_n}  T_\omega)  =
\sum_{x\in \Lambda_n(\omega)}\ T_\omega(x,x)
\]
Define with such a $u$
$$ v(\omega) := \Tr (u \, T_\omega) = \sum_{x\in X(\omega)} u(x) T_\omega (x,x).$$
By the ergodic theorem, $  v_{I_n} /|I_n| $ converges  almost surely
to some $\Gamma$-invariant $g\in L^1(\mu)$. A direct calculation using equivariance shows
\[
v_{I_n} (\omega) 
= \sum_{x\in X(\omega)}   u_{I_n}(x) T_\omega (x,x) .
\]
Thus the first statement of the Proposition is proven.
If $\mu $ is ergodic,  then $g
=\int \Tr (u \, T_\omega) d\mu(\omega)=\dens(m) \tau (T)$ almost surely.
\end{proof}

We note the following special case of the previous proposition.

\begin{cor}\label{Ivso}
  Let $\mu$ be ergodic and $(I_n)$ be a tempered {F\o lner} sequence in $\Gamma$.  For almost every $\omega\in
  \Omega$, we have $\lim_{n\to\infty} \frac{\omega (\Lambda_n)}{|I_n|} =
  \dens(m)=\dens (m) \tau (\Id)$. In particular, $\tau (\Id)=1$.
\end{cor}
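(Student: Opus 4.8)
The plan is to obtain this as a direct specialization of Proposition \ref{erg}, using the identity operator for $T$. First I would observe that $\Id = (\Id_\omega)$, where $\Id_\omega$ is the identity on $\ell^2(X(\omega))$, is an element of $\mathcal{N}$: it is clearly bounded, selfadjoint, decomposable, and equivariant (the matrix entries are $\delta_{xy}$, which are invariant under the induced $\Gamma$-action). Hence Proposition \ref{erg} applies with $T = \Id$. For this choice, $\Tr(\chi_{\Lambda_n} \Id_\omega) = \sum_{x \in \Lambda_n(\omega)} 1 = \omega(\Lambda_n)$, so the proposition immediately gives $\lim_{n\to\infty} \frac{\omega(\Lambda_n)}{|I_n|} = g(\omega)$ almost surely and in $L^1$, with $g$ being $\Gamma$-invariant.

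Next, since $\mu$ is ergodic by hypothesis, Proposition \ref{erg} further identifies the limit as the constant $g = \dens(m)\,\tau(\Id)$ almost surely. So it remains to show $\tau(\Id) = 1$. For this I would compute $\tau(\Id)$ directly from the definition in Theorem \ref{tracetheorem}: with $u$ satisfying \eqref{e-admissible},
\[
\tau(\Id) = \frac{1}{\dens(m)} \int_\Omega \Tr(u\,\Id_\omega)\, d\mu(\omega) = \frac{1}{\dens(m)} \int_\Omega \sum_{x \in X(\omega)} u(x)\, d\mu(\omega).
\]
By Corollary \ref{ArbitraryI} we may instead take $u = \frac{1}{|I|}\chi_{IF'}$ for any precompact $I \subset \Gamma$ with $|I| > 0$, giving
\[
\tau(\Id) = \frac{1}{|I|}\,\frac{1}{\dens(m)} \int_\Omega \omega(IF')\, d\mu(\omega) = \frac{1}{|I|}\,\frac{1}{\dens(m)}\, m(IF') = \frac{1}{|I|}\,\frac{1}{\dens(m)}\,\dens(m)\,|I| = 1,
\]
where the middle equalities use the definition of $m$ and the relation $m(IF') = \dens(m)|I|$ established in Section \ref{s-Setting}. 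Therefore $\tau(\Id) = 1$, and consequently $g = \dens(m)\cdot 1 = \dens(m)$ almost surely, which gives the claimed limit $\lim_{n\to\infty}\frac{\omega(\Lambda_n)}{|I_n|} = \dens(m) = \dens(m)\tau(\Id)$.

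There is no real obstacle here; the only point requiring a little care is verifying that $\Id \in \mathcal{N}$ and that all the bookkeeping constants ($\dens(m)$, the normalization $\tau(\Id)$, the F\o lner normalization $|I_n|$) line up correctly, but these are all immediate from the definitions and from Corollary \ref{ArbitraryI} together with the identity $m(IF') = \dens(m)|I|$. The substance of the corollary is entirely carried by Proposition \ref{erg} and the ergodic theorem cited there.
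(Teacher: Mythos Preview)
Your proof is correct and follows essentially the same route as the paper: apply Proposition \ref{erg} with $T=\Id$ to obtain almost-sure convergence of $\omega(\Lambda_n)/|I_n|$ to $\dens(m)\tau(\Id)$, then verify $\tau(\Id)=1$. The only cosmetic difference is that you compute $\tau(\Id)=1$ directly via Corollary \ref{ArbitraryI} and the identity $m(IF')=\dens(m)|I|$, whereas the paper observes that $\int \omega(\Lambda_n)/|I_n|\,d\mu(\omega)=m(\Lambda_n)/|I_n|=\dens(m)$ for every $n$ and then uses the $L^1$-convergence from Proposition \ref{erg} to identify the limit; since these two computations unwind to the same equality, the approaches coincide.
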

\begin{proof} The previous proposition with $T=\Id$ shows
  pointwise and $L^1$ convergence of the functions $\omega \mapsto
  \frac{\omega (\Lambda_n)}{|I_n|}$ to the constant $\dens(m)\tau(\Id)$. Since
\[
\text{dens}(m) = \frac{m(\Lambda_n)}{|I_n|} =
\int \frac{\omega(\Lambda_n)}{|I_n|} d\mu(\omega)
\]
 for all  $n\in\NN$, $\tau(\Id)$ must be equal to  $1$  and the statement follows.
\end{proof}

\begin{proof}[Proof of Theorem \ref{main3}]
Since $\frac{N_{\omega,n}}{\omega(\Lambda_n)}= \frac{N_{\omega,n}}{|I_n|}\frac{|I_n|}{\omega(\Lambda_n)}$
and $\frac{|I_n|}{\omega(\Lambda_n)}$ converges to $\dens(m)^{-1}$ almost surely,
it suffices to show convergence of $\frac{N_{\omega,n}}{|I_n|}$.
 There are at
  most countably many points of discontinuity of $\nu_H$. Thus, Lemma
  \ref{discontinuity} combined with Proposition \ref{erg}, gives convergence
  in all points of discontinuity of $\nu_H$ almost surely. On the other hand, note that
  the space of continuous functions with compact support on $\RR$ is
  separable. Thus, weak convergence of probability measures follows from
  convergence on a countable dense subset of  continuous functions on $\RR$
  with compact support.
Thus, Lemma
  \ref{vague} combined with Proposition \ref{erg} gives weak convergence of
  the measures almost surely. Now, the result follows from Lemma \ref{vagueandmore}.
\end{proof}

\begin{Rem} \label{r-non-ergodic}
Even in the case that $\mu$ is not ergodic we can prove a uniform convergence statement.
By assumption (A)  there is an $R'\in \RR $ such that $\sigma(H_\omega) \subset [-R',R']$.
Let $\psi_j, j\in\NN$ be a countable dense set in $C([-R',R'])$.
By Proposition \ref{erg} there exists  a set of full measure $\Omega_j\subset\Omega$
such that for all $\omega \in\Omega_j$
\[
  l_\omega(\psi_j):=\lim_{n\to\infty} \frac{1}{|I_n|} \Tr (\chi_{\Lambda_n}  \psi_j(H_\omega))
\]
exists. This way one defines for all $\omega \in \tilde \Omega:=\cap_{j\in\NN}\,  \Omega_j$ a positive, bounded linear functional
$l_\omega$, i.e.~a measure. In particular one may define a density for such $\omega$ by
$\dens(\omega) =\lim_{n\to\infty} \frac{\omega(\Lambda_n)}{|I_n|}$.
For $\omega \in \tilde \Omega$ with $\dens(\omega)>0$ we can as before renormalize the sequence
$\frac{N_{\omega,n}}{\omega(\Lambda_n)}= \frac{N_{\omega,n}}{|I_n|}\frac{|I_n|}{\omega(\Lambda_n)}$
which thus converges to the distribution function of $\nu_\omega:=\frac{l_\omega}{\dens \omega }$.
If  $\dens(\omega)=0$ we still see that
$\Tr (\chi_{\Lambda_n}  \psi(H_\omega)) \le \|\psi\|_\infty \omega(\Lambda_n)$. Thus if $X(\omega)$
is not empty
\[
\frac{\Tr (\chi_{\Lambda_n}  \psi(H_\omega))}{\omega(\Lambda_n)}
\le \|\psi\|_\infty
\]
and we can conlude by the Banach-Alaouglu theorem that there is a subsequence along which
$\displaystyle \frac{N_\omega^{n_k}}{\omega(\Lambda_{n_k})}$ converges weakly.
A posteriori, we can enhance this to uniform convergence using Lemma \ref{vagueandmore}.

\end{Rem}

\section{Models with aperiodic order}\label{s-Aperiodic}
In this section, we discuss models with aperiodic order. In these
cases $\Gamma = X=\RR^m$ is a continuous group.  We recover the main
result of \cite{KlassertLS-03} concerning characterization of jumps of the IDS
via compactly supported eigenfunctions. In fact, we obtain a
strengthening of the result of \cite{KlassertLS-03} in three respects: We do not
need an ergodicity assumption anymore, we do not need a finite local complexity assumption  and we identify the size of the
jumps as a equivariant dimension. (Note that the latter, however, can
directly derived from the convergence statement in \cite{LenzS-06}).  We
also obtain a result on convergence of the IDS. This result, however,
is strictly weaker than the results of \cite{LenzS-06} as it neither holds
for all $\omega\in\Omega$ nor gives an explicit error bound on the speed
of convergence.

\smallskip

The setting is as follows: There is an obvious action of
$\Gamma=\RR^m$ on $X=\RR^m$ by translation.  A fundamental domain is
given by the compact set $\{0\}$ and the map $\Phi \colon \Gamma
\longrightarrow \RR^m$ is just the identity and therefore an isometry.
Hence, the geometric assumptions of our setting are satisfied.
The ball around $x\in \RR^m$ with radius $r$ is denoted by $B_r (x)$.
As before, $\mathcal{D}$ is the
set of subsets of $\RR^m$ whose elements have  Euclidian
distance at least $1$.
We call a subset $\cM$ of $\cD$ a collection of Delone sets
if the following holds:
\begin{itemize}
\item[--] There exists an $R'>0$ with $A \cap B_{R'} (x)\neq \emptyset$ for every $A\in\cM$ and $x\in X$.
\end{itemize}
It is said to be of finite local complexity if it also satisfies the following:
\begin{itemize}
\item[--] For each $r>0$, the set $\{ (A-x)\cap B_r(0)\mid A \in\cM, x\in A\}$ is finite.
 \end{itemize}

Let $\mu=\mu_{\cM}$ be an invariant probability measure
on $\mathcal{D}$ whose support $\Omega$ is a collection of Delone sets.
In particular $X(\omega) = \omega \in \Omega$ is a discrete subset of $\RR^d$. Assume that the density of $m$ is $1$.

This setting gives a notion of an equivariant operator as a family $(H_\omega)$
of operators $H_\omega\colon\ell^2 (X(\omega))\longrightarrow \ell^2 (X(\omega))$
with
$$ H_{\gamma + \omega} ( \gamma +x,\gamma +y) = H_\omega (x,y).$$
The natural trace is defined via
$$\tau (H) = \int_\Omega \Tr (u H_\omega) d\mu (\omega),$$
where the continuous  $u\colon  \RR^m\longrightarrow \RR$ is an arbitrary function   with compact support and  $\int_\RR^m  u (x) dx =1$.

\smallskip

For an operator satisfying (A) and $\lambda\in\RR$ our abstract results give:

\begin{itemize}
\item[(i)]  Let $U_\omega$ be the subspace of $\ell^2(X(\omega))$ spanned by compactly supported solutions to  $(H_\omega-\lambda)u =0$.
Then, the value $\nu_H(\{\lambda\}) =N_H (\lambda) - N_H (\lambda-) = \tau (E_H (\{\lambda\})$ is the
equivariant dimension of the subspace $\int^\oplus U_\omega d\mu(\omega)$ of $\int^\oplus_\Omega \ell^2 (X(\omega))d\mu (\omega)$.
\item[(ii)] If $\mu$ is furthermore assumed ergodic, then  $\lambda$ is a point of discontinuity of $N_H$
if and only if there exists a compactly supported  eigenfunction of $H_\omega$ to $\lambda$  for almost every $\omega\in \Omega$.  In this case, these compactly supported eigenfunctions span the eigenspace of $H_\omega$ to $\lambda$.
\item[(iii)] The normalized finite volume counting functions
$\frac{N_{\omega,n}}{|\Lambda_n|}$ converge almost surely with respect to the supremum norm
towards the function $N_H$, i.e.
\[
\lim_{n\to\infty}\Big\|\frac{1 }{|\Lambda_n|}N_{\omega,n}- N_H \Big\|_\infty =0.
\]
\end{itemize}

\section{Periodic models on amenable graphs and CW-complexes}
\label{PeriodicExamples}
In this section, we briefly discuss how the corresponding results of
\cite{DodziukLMSY-03,DodziukM-98,Eckmann-99,Kuchment-05,MathaiSY-03,MathaiY-02} can be recovered in our framework.  In
these cases the group $\Gamma$ as well as the space $X$ are discrete and countable.

\smallskip

The geometric setting in the cited works is given either by a graph
or a CW-complex. Let us first consider the case that a graph $G=(V,E)$ with
vertex set  $V$ and edge set $E$ and a finitely generated amenable group
$\Gamma$ are given, such that each vertex degree is finite and $\Gamma$ acts
freely and cocompactly on $G$ by automorphisms.  We set $X:=V$ and show that the
assumptions of our setting are satisfied: A finite set $S_0$ of
generators of $\Gamma$ which is symmetric in the sense that
$S_0=S_0^{-1}:= \{\gamma^{-1}\mid \gamma \in S_0\}$ defines a word metric
$d_{S_0}$ on $\Gamma$. Obviously, $d_{S_0}$ is invariant under the action of $\Gamma$
on itself. If we choose a different (finite, symmetric) set of generators $S$
it gives rise to another metric $d_S$. This metric is equivalent to the metric $d_{S_0}$ and in particular  the two spaces
$(\Gamma, d_S)$ and $(\Gamma, d_{S_0})$ are roughly isometric.

As $\Gamma$ acts cocompactly, there exists a compact (i.e.~finite)
fundamental domain $F'$ of the action of $\Gamma$. In particular, $F'$
is equal to its closure $F$. As $\Gamma$ acts freely, we obtain a well
defined map $$\Phi \colon X \longrightarrow \Gamma, \mbox{with} \; \: x\in
\Phi(x) F.$$ We have to show  hat this map is a rough isometry. To do so,
we need of course a metric on $X$. Let us first assume that the graph is connected.
Then $X=V$ carries a
natural metric $d$ coming from finite paths between the points. This
metric is obviously $\Gamma$-invariant. Set
$$ C:=\max_{s\in S\cup\{\id\}} \{ d(x,y)\mid x\in F, y\in s F \} < \infty. $$
Then,
$$ d(x,y) \leq C d_\Gamma (\Phi(x),\Phi (y)) + C.$$ As for the
converse inequality, we need some more preparation. We say that
$\gamma\in \Gamma$ is a neighbor of $\rho\in\Gamma$ if there exists
an edge connecting a vertex in  $\rho F$ with a vertex in $\gamma F$.
Denote the  set of neighbors of $\id\in\Gamma$ by $S_0$.
Since $X$ is connected, $S_0$ is a set of generators of $\Gamma$;
since each vertex degree is finite, $S_0$ is finite; and by the properties of the action of $\Gamma$,
$S_0$ is symmetric.
Thus
\[
d_{S_0} (\Phi (x), \Phi (y)) \leq d(x,y).
\]
Since any word metric on $\Gamma$ is roughly isometric with
$d_{S_0}$,  this shows that $\Phi$
is indeed a rough isometry.

If the graph is not connected, there is no natural choice of a metric on $X$.
In this case, we can induce a metric on $X$ by the metric on $\Gamma$ and $\Phi$
in two steps: the metric on the group $\Gamma$ defines a distance between different fundamental domains.
We can assume that this distance function takes values in $\NN$.
Within the fundamental domains the distance between two points is defined using shortest paths.
Let us scale the latter distance function such that the diameter of a fundamental domain is bounded by one.
This way one obtains a metric on $X$ which is by construction roughly isometric to $\Gamma$.
\medskip

Let us now turn to the case of CW-complexes.
Thus let a CW-complex $Y$ and a finitely generated amenable group $\Gamma$
be given which acts freely on $Y$ by automorphisms. We assume that the quotient $Y/\Gamma$
is a CW-complex of finite type, i.e.~all its skeleta are finite.
For a $j \in \NN$ denote by $Y_j$ the set of $j$-cells in
$Y$. Two such cells are called adjacent if either the intersection of their closures
contains a $j-1$ cell of $Y$, or if both are contained in a the closure of a single $j+1$ cell.
Since we assumed that the quotient $Y/\Gamma$ is of finite type, the number of cells adjacent to any given cell is finite.
Now we fix $j \in \NN$  and define a graph $G_j$ with vertex set $V=Y_j$.
Two elements $V$ are connected by an edge iff they are adjacent.
Each automorphism of the original CW-complex induces a graph-automorphism on $G_j$.
In particular, $\Gamma$ acts freely and cocompactly on $G_j$.
Thus we are back in the setting which we discussed at the start of this section.
(Note that for each $j \in \NN$ we extract
from the CW-complex $Y$ a different graph $G_j$ and correspondingly the graph Hamiltonians,
which we define below, will also depend on $j$.)

These considerations show that the geometric assumptions of our model are
satisfied in the cited works.
\smallskip

In the present setting the set $X$ itself belongs to $\mathcal{D}$ and is in fact
invariant under the action of $\Gamma$.  Thus, we can choose the
measure $\mu$ on $\mathcal{D}$ to be supported on $\{X\}$. This means that
$\Omega$ consists of a single element which is just $X$. Thus,
everything depending on the family $\omega\in \Omega$ is replaced by a
single object in the sequel. In particular, we certainly have all
ergodic assumptions satisfied. In fact, all statements concerning
almost sure convergence in $\Omega$ give deterministic statements.
\smallskip

We will now deal with the operator theoretical side of things.  In
\cite{MathaiY-02,MathaiSY-03} one is given a family $s_\gamma$,$\gamma\in\Gamma$ of
maps $s_\gamma \colon V\longrightarrow \{z\in \CC\mid |z|=1\}$. In the other
cases one just sets $s_\gamma \equiv 1$,$\gamma\in\Gamma$.  The family
of operators in question is then given by a single operator $H=H_X$
satisfying
$${s_\gamma (x)} H(\gamma x,\gamma y) \overline{s_\gamma (y)} = H(x,y).$$
The natural trace becomes
$$\tau (H) = \Tr (\chi_F H).$$

As before denote by $H_n$ the restriction of the operator
$H$ to $\Lambda_n= I_n F$, where $I_n\in\Gamma$ is a {F\o lner} sequence.
With the usual convention $N_H (\lambda):=\tau ( E_H (\lambda) )$ and $N_{H,n}:=\Tr E_{H_n}(\lambda)$, our results can the be re formulated as follows:

\begin{itemize}
\item[(i)] For any $\lambda\in \RR$, the value $\nu_H(\{\lambda\})=  \tau (E_H (\{\lambda\})
$ is
the equivariant dimension of the subspace of $\ell^2 (X)$ spanned by compactly supported solutions of $(H_\omega -\lambda)u =0$.
\item[(ii)]  A number $\lambda\in \RR$ is a point of discontinuity of $N_H$ if and only if there exists a compactly supported eigenfunction of $H$ to $\lambda$.
\item[(iii)] The functions $N_{H,n}$ converge with respect to the supremum norm
towards the function $N_H$, i.e.
\[
\lim_{n\to\infty}\Big\|\frac{1 }{|\Lambda_n|} N_{H,n}- N_H \Big\|_\infty =0.
\]
\end{itemize}

\section{Anderson and percolation Hamiltonians}\label{s-APH}

In this section we discuss the application of our results to certain types of random models on graphs.
More precisely, we consider Anderson models, random hopping models, as well as site and bond percolation models.
They can be understood as randomized versions of the operators introduced in Section \ref{PeriodicExamples}.
In particular we are again given a graph $G=(V,E)$ with bounded vertex degree on which a finitely generated,
amenable group $\Gamma$ acts freely and cocompactly by automorphisms.

The application of our abstract theorems to these models recover and extend in particular the results
which concern the construction of the IDS by its finite volume analogues
obtained in \cite{Veselic-05a,KirschM-06,LenzMV}.

%
%
%
%

Let us introduce the \emph{Anderson-Percolation Hamiltonian}. 
We set $X=V$ and assume that a function $h_0\colon X\times X \to \CC$ is given with
$h_0(x,y) =  \overline{h_0(y,x)}$ and $h_0(x,y)\neq 0\Rightarrow d(x,y) <R$.
Consider the setting from Section \ref{s-Setting} and assume additionally
that there exist a constant $C$ and a function
$\cV_\omega\colon X \times X\to [-C,C]$ with support on the diagonal  $D:=\{(x,x)\mid x \in X\}$ sucht that
for all $\omega =(A,h)\in \Omega$
\[
 h = \big ( h_0 + \cV_\omega \big ) \chi_{A\times A}.
\]
The operator associated  to $\omega\in \Omega$ acts on the $\ell^2$ space of the diluted graph $X(\omega)=A$.
More precisely for each $v \in \ell^2(X(\omega))$ and $x \in X(\omega)$
\[
 (H_\omega v)(x)= \Big (\sum_{y\in X(\omega)} h_0(x,y) v(y) \Big ) + \cV_\omega v(x).
\]
We can think of the first term as the kinetic energy or hopping term and of the second as a random potential.
In the special case that $X(\omega) = X$ for all $\omega \in \Omega$ and
\[
h_0(x,y) = \chi_{1}(d(x,y)):= \begin{cases}
		             1 &\text{ if }  d(x,y) =1, \\
           		     0 & \text{ otherwise}
           		     \end{cases}
\]
we obtain the Anderson model.
Likewise, for $\cV_\omega \equiv 0$ and $h_0(x,y) = \chi_{1}(d(x,y))$ we have the site-percolation Hamiltonian.
Of course it is possible to choose the measure $\mu$ in such a way that the discussed models are i.i.d.
with respect to the coordinates $x\in X$, see cf.~\cite{Veselic-05b}.

Let us now discuss the random hopping model, which includes the bond percolation model as a special case.
Such models have bee considered for instance in \cite{KloppN-03,KirschM-06}.
Here we require each $(A,h)\in \Omega$ to satisfy additionally to the conditions
in Section  \ref{s-Setting} that $A= X$ and  $ h \equiv 0$ on the diagonal $D$.
The matrix coefficient $h(x,y)$ may be considered as a hopping term between $x$ and $y$
(at least when $h$  is non-negative). In the case that  $h(x,y)\in \{0,1\}$ we obtain a bond-percolation Hamiltonian.
Again, a suitable choice of the measure $\mu$ yields an i.i.d. model.

Note that Dirichlet and Neumann boundary terms as considered in \cite{KirschM-06,AntunovicV-b}
can be incorporated into a potential energy term $\cV_\omega$.
Let us emphasize that the models discussed above do not have necessarily
finite local complexity.

As before denote by $N_{\omega,n}$ the eigenvalue counting functions associated to
a {F\o lner} sequence $I_n$ in $\Gamma$.
Our results from Section \ref{s-Setting} can the be now reformulated as follows:

\begin{enumerate}[(i)]
\item  Let $U_\omega$ be the subspace of $\ell^2(X(\omega))$ spanned by compactly supported eigenfunctions of $H_\omega$
Then, $\nu_H(\{\lambda\})=N_H(\lambda)-\lim\limits_{\epsilon \to 0} N_H(\lambda-\epsilon)$ equals the
equivariant dimension of the subspace $\int^\oplus U_\omega d\mu(\omega)$ of $\int^\oplus_\Omega \ell^2 (X(\omega))d\mu (\omega)$.
\item  If	 $\mu$ is furthermore assumed ergodic,
then  $\lambda$ is a point of discontinuity of $N_H$
if and only if there exist compactly supported eigenfunctions to $H_\omega$ and $\lambda$
for almost every $\omega\in \Omega$. In this case, these eigenfunctions
 actually span the  eigenspace of  $H_\omega$ to the eigenvalue $\lambda$
for almost every $\omega\in \Omega$.
\item
For $\mu$-almost all $\omega$, the distibution function $N_H$ can be approximated by $N_{\omega,n}$ uniformly in the energy variable :
\[
\lim_{n\to\infty}\Big\|\frac{1 }{|\Lambda_n|}N_{\omega,n}- N_H \Big\|_\infty =0.
\]
\end{enumerate}

\section{Percolation on Delone sets}
\label{s-DelonePercolation}

Consider the setting explained in Section \ref{s-Aperiodic}.
In particular, let the space $X$ and the group $\Gamma$ equal $\RR^d$. Let  $\cM\subset\cD$
be a collection of Delone sets of finite local complexity
and $h_0 \colon\RR^d\to\RR$  a bounded, measurable function of compact support satisfying  $h_0(-x)=h_0(x)$.
For each $A \in \cM$ let  $\cE(A)$ consist of pairs of subsets $E_1,E_2$ of $A\times A$ satisfing the following
\begin{enumerate}
 \item $E_1$ and $E_2$ are disjoint,
\item $E_i \cap D=\emptyset$ where as before $D= \{(x,x) \mid x \in \RR^d\}$ and
$(x,y) \in E_i \Rightarrow (y,x) \in E_i $ for $i \in \{1,2\}$.
\end{enumerate}
In other words $E_1,E_2$ is a pair of disjoint sets of edges for the vertex set $A$.
For such $(E_1,E_2)\in \cE(A)$ set
\begin{align*}
 Z_{A,E_1,E_2} := \{ (A,h) \mid & h \colon A\times A \to \RR, h(x,y) \in \{0, h_0(x-y)\}, \\
  &h(x,y) =h_0(x-y) \text{ for all } (x,y) \in E_1, h(x,y) =0 \text{ for all } (x,y) \in E_2\}.
\end{align*}
For $p \in [0,1]$ fixed, define a measure $\mu_{A}$ on the cylinder sets
$Z_{A,E_1,E_2}$  with $E_1$ and $E_2$ finite by setting
\[
 \mu_{A} (Z_{A,E_1,E_2}) := p^{|E_1|} (1-p)^{|E_2|}
\]
and extend it to $\{h \colon A\times A \to \RR\}$ by uniqueness.
Recall that the projection
\[
\pi_1\colon \tcD \to \cD, \quad (A,h) \mapsto A
\]
is measurable. Denote by $\mu_{\cM}$ an invariant probability  measure  on $\cD$ whose support is $\cM$.
Next we define a measure $\mu$  on  $\tcD$. For this purpose we denote the pairs $(A,h)$ by $\omega$
althought the measure $\mu$ and thus its support $\Omega$ are yet to be identified. For a mesurable $ B \subset
\tcD$ we define
\[
\mu(B) = \int_{\cM}\left ( \int_{\pi^{-1} (A) } \chi_B   \ d\mu_A (\omega) \right ) d\mu_{\cM} (A).
\]
Note that if $E_1$ and $E_2$ form a partition of $A\times A$, then $ Z_{A,E_1,E_2}$ contains a single element.
Thus every $\omega = (A,h)$ in the support $\Omega$ of $\mu$ can be identified with such an $ Z_{A,E_1,E_2}$.
The associated operator $H_\omega$ has matrix coefficients
\[
 H_\omega(x,y) = \begin{cases}
                  h_0(x-y) =h_0(y-x) & \text{ if } (x,y) \in E_1 \\
		  0     & \text{ if } (x,y) \in E_2
                 \end{cases}
\]
and defines a bond-percolation Hamiltonian on $\cM$.

Since we are in Euclidean space we can choose $I_n= \Lambda_n$ to be balls or cubes of diameter $n \in \NN$.
\medskip

Again we have the following results:

\begin{enumerate}[(i)]
\item  Let $U_\omega$ be the subspace of $\ell^2(X(\omega))$ spanned by compactly supported solutions  of $(H_\omega-\lambda) u=0$.
Then, $\nu_H(\{\lambda\})=N_H(\lambda)-\lim\limits_{\epsilon \to 0} N_H(\lambda-\epsilon)$ equals the
equivariant dimension of the subspace $\int^\oplus U_\omega d\mu(\omega)$ of $\int^\oplus_\Omega \ell^2 (X(\omega))d\mu (\omega)$.
\item  If $\mu$ is furthermore assumed ergodic,
then  $\lambda$ is a point of discontinuity of $N_H$
if and only if there exists a compactly supported eigenfunctions to $H_\omega$ and $\lambda$
for almost every $\omega\in \Omega$. In this case, these eigenfunctions
 actually span the  eigenspace of  $H_\omega$ to the eigenvalue $\lambda$
for almost every $\omega\in \Omega$.
\item
The distibution function $N_H$ can be approximated by $N_{\omega,n}$ uniformly in the energy variable:
\[
\lim_{n\to\infty}\Big\|\frac{1 }{|\Lambda_n|}N_{\omega,n}- N_H \Big\|_\infty =0
\]
almost surely.
\end{enumerate}

\appendix
\section{Topology on $\tcD$ and compactness}

We discuss the topology and compactness of $\tcD$.

\smallskip

We start with a slightly more general setting. Let $Z$ be a locally compact
space. Denote the set of  measures on $Z$ by $M (Z)$ and the set of
continuous functions with compact support on $Z$ by $C_c (Z)$. The set $M(Z)$
can be embedded into $\prod_{\varphi \in C_c (Z)} \CC$ via
\[
\Psi\colon M(Z)  \longrightarrow \prod_{\varphi \in C_c (Z)} \CC,
\quad \Psi(\mu) = (\varphi \mapsto \mu (\varphi)).
\]
The product topology then induces the initial topology on $M(Z)$, which is called vague
topology. Assume now that $\mathcal{U}$ an open covering of $Z$.
Define for $C>0$ the set
$$ M_{C,\mathcal{U}}:=  M_{C,\mathcal{U}} (Z)  =\{\mu \in M(Z) : \mu (U) \leq C \;\:\mbox{for
all $U\in \mathcal{U}$ }  \}.$$
As $\mathcal{U}$ is an open covering, the support of any $\varphi \in C_c (Z)$ can be covered by finitely many elements of $\mathcal{U}$.  Then, Tychonoff theorem easily yields that $ M_{C,\mathcal{U}}$ is contained in a compact subset of $M (Z)$. As $ M_{C,\mathcal{U}}$
is closed it must then be compact as well.

We now specialize these considerations to the situation outlined in
Section $2$:

Thus, we are given a locally compact metric space $X$.  As before, the set of uniformly discrete subsets of $X$ with minimal
distance $1$ is denoted by $\mathcal{D}$.  Let $Y$ be a compact space and
set $Z=X\times Y$. Let $\mathcal{D}_Y$ be the set of all functions
$f\colon A\longrightarrow Y$ with $A\in \mathcal{D}$.
 We can identify elements of $\mathcal{D}_Y$ with
 measures on $Z$ via
\[
\delta : \mathcal{D}_Y \longrightarrow M(Z), \: (f : A
\longrightarrow Y)\mapsto \sum_{x\in A} \delta_{(x,f(x))}.
\]
This induces the initial topology on $\mathcal{D}_Y$. By a slight abuse of language
we call this topology the vague topology. Consider now the cover
$\mathcal{U}$ of $Z$ consisting of products of the form $B_{1/2} \times Y$
with $B_{1/2}$ an open ball with radius $1/2$ in $X$. Then, $\mathcal{D}_Y$ is
a closed and hence compact subset of $ M_{1,\mathcal{U}} (Z)$.

\medskip

We can even consider a kind of universal $Y$ as follows: Let $\CC^*$
be an arbitrary compactification of $\CC$. Then, $Y:= \mathcal{D}_{\CC^*}$ is a compact space by the preceeding considerations and so is then $\mathcal{D}_Y$.
 For an element $ h \colon A\longrightarrow \CC^*$ in $Y$, we  set $\dom(h) :=A$. It is not hard to see that
\[
\tcD :=\{ f \colon A \longrightarrow Y \mid   \dom(f(x))= A \text{ for all } x \in A \}
\]
is a closed subset of the compact space $\mathcal{D}_Y$.  Hence, $\tcD$ is compact as well.
Any $f\colon A \longrightarrow Y $ satisfying $\dom(f(x))= A$
gives rise to an $h \colon A\times A\longrightarrow \CC^*$ with $h(x,y):= f(x)(y)$.
 We can and will therefore  naturally identify $\tcD$ with the set
\[
\{(A,h)\mid A\in \cD, h\colon A\times A \to \CC^* \}.
\]
\medskip

For the use in the main text let us note  that the topology of $\tcD $ is such that
for any  continuous $\phi\colon X\times X \times \CC^*\to \CC$  of compact support, the map
\[
\tcD \to \CC, \quad
(A,h) \mapsto \sum_{x,y\in A} \phi(x,y,h(x,y))
\]
is continuous. In particular, for a continuous $g \colon X \times X \to \CC^*$ of
compact support, the map $ (A,h) \mapsto \sum_{x,y\in A} g(x,y)h(x,y)$ is continuous.

\medskip


\def\cprime{$'$}\def\polhk#1{\setbox0=\hbox{#1}{\ooalign{\hidewidth
  \lower1.5ex\hbox{`}\hidewidth\crcr\unhbox0}}}

\end{document}